\def\R{\mathbb{R}}
\def\Z{\mathbb{Z}}
\def\N{\mathbb{N}}
\def\eps{\varepsilon}
\def\e{\varepsilon}
\def\a{\alpha}
\def\I{I}
\newtheorem{theo}{Theorem}[section]
\newtheorem{lem}[theo]{Lemma}
\newtheorem{pro}[theo]{Proposition}
\theoremstyle{definition}
\newtheorem{rem}[theo]{Remark}
\numberwithin{equation}{section}
\begin{document}

\title{\bf Steady state and long time convergence of spirals
moving by forced mean curvature motion}

 \author{N. Forcadel\footnote{INSA de Rouen, Normandie Universit\'e,
     Labo. de Math\'ematiques de l'INSA - LMI (EA 3226 - FR CNRS 3335)
     685 Avenue de l'Universit\'e, 76801 St Etienne du Rouvray cedex.
     France}, C. Imbert\footnote{CNRS, UMR 7580, Universit\'e
     Paris-Est Cr\'eteil, 61 avenue du G\'en\'eral de Gaulle, 94 010
     Cr\'eteil cedex, France}, R. Monneau
 \footnote{Universit\'e Paris-Est, CERMICS (ENPC), 
6-8 Avenue Blaise Pascal, Cit\'e Descartes, Champs-sur-Marne,
F-77455 Marne-la-Vall\'ee Cedex 2, France}
}

\maketitle


\begin{abstract}
In this paper, we prove the existence and uniqueness of a ``steady''
spiral moving with forced mean curvature motion.  This spiral has a
stationary shape and rotates with constant angular velocity.  Under
appropriate conditions on the initial data, we also show the long time
convergence (up to some subsequence in time) of the solution of the
Cauchy problem to the steady state.  This result is based on a
new Liouville result which is of independent interest.
\end{abstract}

\paragraph{AMS Classification:} 35K55, 35K65, 35A05, 35D40.

\paragraph{Keywords:} spirals, steady state, mean curvature
motion, Liouville theorem, long time convergence, motion of
interfaces, viscosity solutions.


\section{Introduction}

In this paper we are interested in curves in $\R^2$ which are
half-lines attached at the origin.  These lines are assumed to move
with normal velocity
\begin{equation}\label{eq:law}
V_n= 1 + \kappa
\end{equation}
where $\kappa$ is the curvature of the line. We assume that these
curves $\Gamma_t$ can be parametrized in polar coordinates as follows
\[
\Gamma_t = \{(r\cos\theta, r\sin\theta),\quad 
\mbox{such that}\quad r\ge 0,\quad \theta =  -U (t,r)\}.
\]
On the one hand, the Geometric Law~\eqref{eq:law} holds true if
$U$ satisfies 
\[ U_t = (1 + \kappa_U) |\nabla U|.\]
On the other hand, it is known (see for instance
\cite{spirale1}) that the curvature of the parametrized curve
$\Gamma_t$ has the following form
\begin{equation}\label{def:courbure}
 \kappa_{U}(t,r)=U_r\left(\frac {2+(r
  U_r)^2}{(1+(rU_r)^2)^{\frac32}}\right) + \frac{r
  U_{rr}}{(1+(rU_r)^2)^{\frac 32}}.
\end{equation}
Hence, the function $U$ has to satisfy the following quasi-linear parabolic
equation in non-divergence form for $(t,r)\in (0,+\infty)\times
(0,+\infty)$:
\begin{equation}\label{eq:main}
rU_t=\sqrt{1 + r^2U_r^2} + U_r \left(\frac{2+ r^2 U_r^2}{1+ r^2
  U_r^2}\right) + \frac{rU_{rr}}{1 + r^2U_r^2}
\end{equation}
supplemented with the following initial condition for $r\in (0,+\infty)$
\begin{equation}\label{eq:ci}
U(0,r)=U_0 (r).
\end{equation}

\subsection{Main results}

In \cite{spirale1}, we were able to prove an existence and uniqueness
result for equation \eqref{eq:main}-\eqref{eq:ci}.  We improve it by
proving in particular that solutions are regular up to the boundary
$r=0$.
\begin{theo}[Existence and uniqueness for the Cauchy problem]\label{th:cauchy}
Assume that $U_0\in W^{2,\infty}_{loc}(0,+\infty)$ is globally
Lipschitz continuous and satisfies
\[(U_0)_r\in W^{1,\infty}(0,+\infty) \quad \mbox{or}\quad 
\kappa_{U_0}\in L^\infty(0,+\infty)\]
and that there exists a radius $r_0>0$ such that
\[|1+ \kappa_{U_0}|\le Cr \quad \mbox{for}\quad 0\le r\le r_0.\]
Then there exists a globally Lipschitz continuous (in space and time) solution
$U$ such that
\[ U \in C^{1+\frac 16,2+\frac
  13}_{t,r}((0,+\infty)\times [0,+\infty)) \cap
  C^\infty((0,+\infty)\times (0,+\infty)).\] 
Moreover, for every $\delta>0$, $R>0$, there exists a constant
$C=C(\delta,R)$ such that for every $T\ge \delta>0$,
\[\|U-U(T,0)\|_{C^{1+\frac 16,2+\frac 13}_{t,r}([T, T+\delta]\times
  [0,R])}\le C.\]
Such a solution is unique in the class of continuous viscosity
solutions of \eqref{eq:main}-\eqref{eq:ci}. 
\end{theo}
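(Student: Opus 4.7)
The plan is to bootstrap from the existence of a globally Lipschitz viscosity solution (already at hand from \cite{spirale1}) and upgrade it to a classical solution with the claimed Hölder regularity up to the degenerate axis $r=0$; uniqueness in the viscosity class was already established in \cite{spirale1} via a standard doubling of variables argument, so the new content is entirely in the regularity statement. The first step is to obtain global Lipschitz estimates. The spatial bound $\|U_r(t,\cdot)\|_\infty\le\|(U_0)_r\|_\infty$ follows from comparing the solution with its horizontal translate $(t,r)\mapsto U(t,r+h)$ via the comparison principle; for the time derivative one compares $U(t,r)$ with $U(t+h,r)$ and uses that $|U_t(0,\cdot)|\le C(1+\|\kappa_{U_0}\|_\infty)$ is finite under either alternative hypothesis on $U_0$.

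The second step is interior smoothness: for $r$ bounded below, dividing \eqref{eq:main} by $r$ yields a quasi-linear parabolic equation $U_t=F(r,U_r,U_{rr})$ whose ellipticity constant $\partial F/\partial U_{rr}=1/(r(1+r^2U_r^2))$ lies between two positive constants on any strip $\{r_1\le r\le r_2\}$ thanks to the previous step. Standard quasi-linear parabolic theory (for instance Ladyzhenskaya--Solonnikov--Ural'tseva) then gives $U\in C^\infty((0,+\infty)\times(0,+\infty))$ with uniform estimates on time--space cylinders away from the axis, which in particular implies the claimed uniform $C^{1+1/6,2+1/3}$ bound on every set $[T,T+\delta]\times[r_1,R]$. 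The third and most delicate step is to extend the Hölder regularity up to $r=0$. Here the compatibility condition $|1+\kappa_{U_0}|\le Cr$ is crucial. I would construct explicit sub- and super-solutions of the form $U(t_0,0)+\ell(t_0)r\pm(A(t-t_0)^{\beta}+Br^{\gamma})$ near the axis, with exponents $\beta,\gamma$ tuned to the degeneracy coming from the $r$-factor, in order to obtain decay estimates $|U_t(t,r)|\le Cr^{\delta}$ and $|U_r(t,r)-U_r(t,0)|\le Cr^{\delta'}$ for some $\delta,\delta'>0$. Plugging these pointwise bounds back into \eqref{eq:main} and applying the interior Schauder estimates from the second step on the dyadic annuli $r\sim 2^{-k}$, together with the natural parabolic rescaling $(t,r)\mapsto(\lambda^2 t,\lambda r)$, would yield the Hölder regularity up to the axis; the pair of exponents $(1/6,1/3)$ is consistent with the parabolic scaling $\alpha_t=\alpha_r/2$.

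The main obstacle is this last step. The vanishing coefficient $r$ in front of both $U_t$ and $U_{rr}$ makes the equation genuinely degenerate on the axis, so standard interior parabolic regularity breaks down there, and a half-line is attached to the origin, which further constrains the admissible local geometry of $U$. The barrier construction has to be delicately tuned both to the compatibility condition on $U_0$ and to the parabolic scaling in order to reach the sharp exponents stated in the theorem, and it will probably be necessary to iterate it to bootstrap from a weak decay rate to the full $C^{1+1/6,2+1/3}$ regularity up to $r=0$.
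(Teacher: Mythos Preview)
Your first two steps are broadly aligned with what the paper does, modulo one inaccuracy: the equation \eqref{eq:main} is \emph{not} translation invariant in $r$, so comparing $U$ with $U(\cdot,\cdot+h)$ does not give the spatial Lipschitz bound. This is not fatal, since the global Lipschitz estimate (in both variables) and the interior $C^\infty$ regularity for $r>0$ are imported wholesale from \cite{spirale1}; you should simply quote that result rather than rederive it.

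The substantive gap is your third step. The paper does \emph{not} proceed via barriers, dyadic annuli, or scaling near the axis. Instead it uses a single trick that removes the degeneracy entirely: one views $U$ as a radial function in three space dimensions and sets
\[
V(t,X)=U(t,|X|)+\tfrac{|X|}{2},\qquad X\in\R^3.
\]
The shift by $|X|/2$ is dictated by the a priori estimate $|U_r(t,r)+\tfrac12|\le Cr$ (obtained directly from the PDE and the Lipschitz bound), which forces $\nabla V$ to vanish at $X=0$ and makes $D^2V$ bounded across the origin. The function $V$ then solves a uniformly parabolic equation of the form $V_t=A(X,\nabla V)\Delta V+B(X,\nabla V)$ on the full ball, with no degeneracy at $X=0$. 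The exponents $(1/6,1/3)$ are not a consequence of a barrier iteration or of parabolic scaling alone: they arise because the lower-order coefficient $B$ is only $C^{1/3}$ in $X$ (through the map $X\mapsto |X|^{-2/3}X$, which is $C^{1/3}$), so that interior Schauder theory in $3$D gives exactly $C^{1+1/6,2+1/3}_{t,X}$ regularity for $V$, hence for $U$ up to $r=0$.

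Your proposed barrier/dyadic approach might in principle yield \emph{some} H\"older regularity up to the axis, but as you yourself note it is incomplete, and there is no clear mechanism in your outline that singles out the exponent $1/3$; the claim that ``$(1/6,1/3)$ is consistent with parabolic scaling'' only fixes the ratio, not the value. The $3$D lift is the missing idea.
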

\begin{rem}
In view of \eqref{def:courbure} and \eqref{eq:main}, the regularity of
$U$ stated in the previous theorem implies in particular that
\begin{equation}\label{eq::ob23}
\kappa_U + 1=0 \quad \mbox{at}\quad  r=0
\end{equation}
holds  for $t>0$. 
\end{rem}
\begin{rem}
The assumption that $U_0$ is globally Lipschitz was missing in the
statement of Theorem~1.7 in \cite{spirale1}.  We will recall below
(see Theorem~\ref{th::ob22}) the corrected version of this result.
\end{rem}
Our second main result is about the existence of a spiral with
stationary shape and rotating at constant speed. 
\begin{theo}[A steady state]\label{th:steady}
There exists a constant $\lambda\in\R$ and a globally Lipschitz
function continuous $\Phi$ in $[0,+\infty)$, satisfying
\[\lambda\ge 0 \quad \mbox{and}\quad \Phi_r \le 0 
\quad \mbox{on}\quad [0,+\infty)\]
such that $U(t,r)=\lambda t +\Phi(r)$ is a solution of (\ref{eq:main})
in $\R\times (0,+\infty)$. Moreover such a $\lambda$ is unique and
such a function $\Phi$ is unique up to addition of a constant. Moreover, the following properties hold true:
\begin{enumerate}[\upshape i)]
\item we have 
\[\frac14 \le \lambda\le \frac12.\]
\item $\Phi\in {C}^\infty(0,+\infty)\cap C^{2+\frac13}([0,+\infty))$
  satisfies for all $r \in [0,+\infty)$
\begin{align}
\label{eq:estGradR}
\displaystyle -\frac12\le \Phi_r&\le -\lambda, \\
\label{eq:profil courbure}
0 \le 1+\kappa_\Phi &\le \lambda r.
\end{align}
 Moreover $\Phi_r$ and the curvature $\kappa_\Phi$ are
  non-decreasing and 
\[\Phi_r(0)=-\frac12, \quad \Phi_r(+\infty)=-\lambda, \qquad
\kappa_\Phi(0)=-1, \quad \kappa_{\Phi}(+\infty)=0.\]
\item There exist some constants $a\in\R$ and $C>0$, such that
$\Phi$ satisfies  for all $r\in [0,+\infty)$
\[|\Phi(r)+\lambda r + \lambda \ln (1+r) -a | \le \frac{C}{1+r}.\]
\end{enumerate}
\end{theo}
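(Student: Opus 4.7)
The plan is to reduce the construction of $(\lambda,\Phi)$ to a singular initial value problem for a scalar ODE, and then to read off each of the properties (i)--(iii) from the analysis of this ODE. Plugging the ansatz $U(t,r)=\lambda t+\Phi(r)$ into \eqref{eq:main} and using \eqref{def:courbure} gives the geometric relation
\[
r\lambda =\sqrt{1+r^2\Phi_r^2}\,(1+\kappa_\Phi),
\]
and the change of unknown $v(r):=r\Phi_r(r)$ turns it into the first order equation
\[
rv' = (1+v^2)\bigl(\lambda r^2 - v - r\sqrt{1+v^2}\bigr),\qquad v(0)=0,
\]
which is singular at $r=0$. A Taylor expansion at the origin forces the slope $v'(0)=-\tfrac12$, so $\Phi_r(0)=-\tfrac12$ and $\kappa_\Phi(0)=-1$, which is exactly the compatibility \eqref{eq::ob23}. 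Hence $\lambda$ plays the role of a nonlinear eigenvalue and $\Phi$ is determined up to an additive constant by one integration.

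The next step is a shooting argument on $\lambda$. For each $\lambda\ge 0$ there is a unique regular solution $v_\lambda$ of the ODE on its maximal interval; I would analyse its trajectory by comparing with the two natural affine barriers $v=-r/2$ (touched tangentially at $r=0$) and $v=-\lambda r$ (the expected asymptote at infinity), and distinguish three regimes: for $\lambda$ too small $v_\lambda/r$ drops below $-\tfrac12$; for $\lambda$ too large it crosses $-\lambda$; and for a unique critical $\lambda^*$ the solution $v_{\lambda^*}$ remains trapped in the strip $-r/2\le v\le -\lambda^* r$ on $[0,+\infty)$ with $v_{\lambda^*}/r\to -\lambda^*$. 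The upper bound $\lambda^*\le \tfrac12$ is then immediate from $\Phi_r(0)=-\tfrac12$ combined with $\Phi_r\le -\lambda^*$, while the lower bound $\lambda^*\ge \tfrac14$ I would derive by exhibiting an explicit subsolution of the ODE that crosses $-\lambda r$ in finite $r$ whenever $\lambda<\tfrac14$. Uniqueness of $(\lambda,\Phi)$ is built into the shooting, and can alternatively be obtained from the comparison principle of Theorem~\ref{th:cauchy} applied to the time-dependent problem \eqref{eq:main}.

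The monotonicity of $\Phi_r$ and $\kappa_\Phi$ (and hence the boundary values in (ii)) follows from differentiating the profile ODE once and applying the linear maximum principle to the resulting equation, whose sign assumptions are preserved thanks to $\Phi_r\le 0$ and $\lambda\ge 0$. The asymptotic expansion in (iii) is obtained by looking for $v(r)=-\lambda r+\alpha(r)$ with $\alpha\to -\lambda$; balancing the leading orders in the ODE yields $\alpha(r)=-\lambda+\tfrac{1-2\lambda^2}{2\lambda r}+O(1/r^2)$, hence $\Phi_r(r)=-\lambda-\lambda/r+O(1/r^2)$, and one integration together with the identity $\ln(1+r)=\ln r+1/r+O(1/r^2)$ converts this into the claimed bound $|\Phi(r)+\lambda r+\lambda\ln(1+r)-a|\le C/(1+r)$. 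The main obstacle will be the shooting step itself: establishing continuity in $\lambda$ of the exit-time functionals associated with the two barriers, excluding intermediate degenerate behaviours, and showing that the admissible set reduces to a single value $\lambda^*$ that lies in $[\tfrac14,\tfrac12]$; this is where both the existence and uniqueness of the steady speed are really encoded.
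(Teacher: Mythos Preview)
Your plan is an ODE/shooting approach, which is quite different from the paper's route. The paper never shoots on~$\lambda$: it first \emph{constructs} a steady state by PDE methods (building periodic-in-time solutions on annuli $[R^{-1},R]$ with Neumann data via \cite{gik02}, obtaining gradient bounds $-\tfrac12\le(\Phi_R)_r\le 0$ by a maximum principle in log-coordinates, and passing to the limit $R\to\infty$ by Ascoli); only afterwards does it analyse the profile ODE, now \emph{already knowing} that $\Phi_r\in[-\tfrac12,0]$, to derive the asymptotics and uniqueness. Uniqueness of $\lambda$ is then obtained by the PDE comparison principle, and uniqueness of $\Phi$ by a separate Space Liouville theorem which relies on two comparison principles (one for small $r$ with a Neumann-type condition at $r=0$, one for large $r$). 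The bound $\lambda\ge\tfrac14$ is obtained by comparing with the explicit subsolution $-\mu e^x$, in the same spirit as you suggest.

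Your reduction to the singular first-order ODE for $v=r\Phi_r$ and the computation $\Phi_r(0)=-\tfrac12$, $\kappa_\Phi(0)=-1$ are correct, and your asymptotic expansion matches the paper's Lemma on the asymptotics of $v=\varphi_x$. But two points in your outline look fragile. First, the dichotomy you propose is not the right one: since $\Phi_{rr}(0)=\lambda/3>0$ for every $\lambda>0$, the quantity $v_\lambda/r=\Phi_r$ \emph{increases} from $-\tfrac12$ near the origin, so ``$v_\lambda/r$ drops below $-\tfrac12$'' is not the failure mode for small~$\lambda$; the actual alternative (which the paper's barrier analysis in log-coordinates makes precise) is that $v$ either crosses above the nullcline $v_0$ and runs to positive values, or stays below a translate of $v_0$ and then $|v/r|\to\infty$. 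Second, ``uniqueness is built into the shooting'' only yields uniqueness among profiles that are regular solutions of the singular ODE at $r=0$; to upgrade this to the theorem's statement (uniqueness among all globally Lipschitz $\Phi$ with $\Phi_r\le 0$) you would still need a boundary-regularity result showing that any such $\Phi$ is $C^2$ up to $r=0$ with the forced initial slope --- this is precisely what the paper establishes separately (Proposition~\ref{pro::ob30}) and then feeds into its Liouville argument. Your alternative suggestion, invoking the PDE comparison principle, handles uniqueness of~$\lambda$ but not, by itself, uniqueness of~$\Phi$.
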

\begin{rem}
Notice that the value of the angular velocity $\lambda$ have been
estimated to be $0.315$ by approximation in \cite{BCF}, and computed
to be $0.330958961$ by a shooting method in \cite{OR}.
\end{rem}
Our third  main result is concerned with the large time
behaviour of solutions of the Cauchy problem for initial data that are
``reasonably close'' to the steady state. 
\begin{theo}[Long time convergence]\label{thm:temps-long}
Under assumptions of Theorem~\ref{th:cauchy}, if the initial data $U_0$ 
further satisfies
\begin{equation}\label{eq::ob26}
|U_0 -\Phi| \le C
\end{equation}
and 
\begin{equation}\label{eq::ob27}
(U_0)_r \le \Phi_r\le -\lambda <0,
\end{equation}
where $(\lambda,\Phi)$ are given in Theorem~\ref{th:steady}, then for
any sequence $t_n \to +\infty$, there exists a subsequence (still
denoted by $t_n$) and a constant $a\in\R$ such that
\[U(t+t_n,r)-(\lambda(t+t_n)+\Phi(r)) \to a \quad 
\mbox{locally uniformly in $\R\times [0,+\infty)$}.\]
\end{theo}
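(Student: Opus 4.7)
The plan is to combine a relaxed-limit compactness argument with the Liouville-type rigidity announced in the abstract. Throughout, set
\[
V(t,r):=U(t,r)-\lambda t-\Phi(r),
\]
so that the task reduces to showing that $V(\cdot+t_n,\cdot)$ subconverges locally uniformly to a constant.

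\emph{Step 1 (uniform bound on $V$).} Equation \eqref{eq:main} involves only derivatives of $U$ and is therefore invariant under the addition of an arbitrary constant. Hence $(t,r)\mapsto\lambda t+\Phi(r)\pm C$ are solutions of \eqref{eq:main} for every $C\in\R$, and by \eqref{eq::ob26} the initial datum $U_0$ is sandwiched between the two profiles $\Phi\pm C$ at $t=0$. The comparison principle of Theorem~\ref{th:cauchy} then yields $\|V\|_{L^\infty([0,+\infty)\times[0,+\infty))}\le C$.

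\emph{Step 2 (propagation of the gradient inequality).} I would next establish
\[
U_r(t,r)\le\Phi_r(r)\qquad\text{for every }t\ge 0,\ r\ge 0.
\]
This holds at $t=0$ by \eqref{eq::ob27}. At $r=0$, formula \eqref{def:courbure} gives $\kappa_U(t,0)=2U_r(t,0)$, so \eqref{eq::ob23} forces $U_r(t,0)=-\frac12=\Phi_r(0)$ for every $t>0$. The $C^{1+\frac16,2+\frac13}_{t,r}$ regularity from Theorem~\ref{th:cauchy} then allows one to differentiate \eqref{eq:main} in $r$ and obtain a linear parabolic equation for $w:=U_r-\Phi_r$. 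A bounded parabolic maximum principle on $(0,T)\times(0,+\infty)$, with $w(t,0)=0$, $w(0,\cdot)\le 0$, and $w\in L^\infty$ (both $U_r$ and $\Phi_r$ being globally bounded), forces $w\le 0$. Equivalently, $V_r\le 0$.

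\emph{Step 3 (compactness).} The interior estimate in Theorem~\ref{th:cauchy} bounds $U-U(T,0)$ in $C^{1+\frac16,2+\frac13}_{t,r}([T,T+\delta]\times[0,R])$ uniformly in $T\ge\delta$; combined with Step~1 to control the normalising constant $U(T,0)-\lambda T-\Phi(0)$, the family
\[
V_n(t,r):=V(t+t_n,r)
\]
is equi-H\"older and equi-Lipschitz on every compact subset of $\R\times[0,+\infty)$. By Arzel\`a--Ascoli, one extracts a subsequence (still denoted $t_n$) with $V_n\to V_\infty$ locally uniformly on $\R\times[0,+\infty)$, where $V_\infty$ is bounded and Lipschitz, and non-increasing in $r$ thanks to Step~2.

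\emph{Step 4 (Liouville rigidity).} Set $U_\infty(t,r):=\lambda t+\Phi(r)+V_\infty(t,r)$. By the standard stability of viscosity solutions under local uniform convergence, $U_\infty$ is a global-in-time viscosity solution of \eqref{eq:main} on $\R\times(0,+\infty)$ that remains at bounded distance from the steady state $\lambda t+\Phi(r)$ and satisfies $(U_\infty)_r\le\Phi_r$. The Liouville-type theorem announced in the introduction, applied to $U_\infty$, then forces $V_\infty$ to be a constant $a\in\R$, which is precisely the conclusion stated.

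The genuinely hard point is Step~4: the Liouville rigidity for bounded, spatially-monotone eternal perturbations of the rotating spiral $\lambda t+\Phi(r)$ is where the real work of the paper lives, and it is invoked here as a black box. Step~2 is the secondary difficulty, because of the degeneracy of \eqref{eq:main} at $r=0$; the crucial feature making the maximum-principle argument go through is that the boundary identity $\kappa_U+1=0$ at $r=0$ pins $U_r(t,0)$ to the same value $-\tfrac12$ as $\Phi_r(0)$, so the boundary contribution for $w=U_r-\Phi_r$ vanishes identically.
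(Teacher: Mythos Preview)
Your overall architecture (comparison to get $|V|\le C$, propagate the gradient inequality, Ascoli--Arzel\`a on the time-shifts, then invoke the Liouville theorem) is exactly the paper's scheme; Steps~1, 3 and 4 line up with Section~\ref{s7} essentially verbatim.

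The one place where you diverge is Step~2, and here there is a real gap. You propose to differentiate \eqref{eq:main} in $r$ and apply a maximum principle to $w=U_r-\Phi_r$ on $(0,T)\times(0,+\infty)$. The linearised equation you obtain has diffusion coefficient $\frac{1}{1+r^2U_r^2}$, which degenerates like $r^{-2}$ as $r\to+\infty$ (recall $U_r\le-\lambda$). A ``bounded solutions'' maximum principle for a linear parabolic equation with this kind of degeneracy at spatial infinity is not a routine fact; boundedness of $w$ alone does not suffice without further structural information, and you have not indicated what barrier or growth condition would close the argument. The boundary identity $w(t,0)=0$ you extract from \eqref{eq::ob23} is correct and useful, but it does not address the difficulty at $r=+\infty$.

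The paper handles this step quite differently (Proposition~\ref{pro:estimate-gradient}). Instead of differentiating, it proves the \emph{integrated} inequality $U(t,r)-U(t,\rho)\le\Phi(r)-\Phi(\rho)$ for $r>\rho$ by a viscosity-type doubling-of-variables argument in log coordinates. One introduces penalisations $\frac{\alpha}{2}x^2$ and $\frac{\eta}{T-t}$, and the crucial cancellation comes from the fact that the profile solves the same equation: $F(x,\phi_x,\phi_{xx})=\lambda=F(y,\phi_x(y),\phi_{xx}(y))$, so after subtraction only penalty terms and error terms in $\alpha x$ survive. The delicate part is then showing these errors are $o(1)$ as $\alpha\to0$, which requires distinguishing the regimes $x\ge b$, $x\le-b$, and $|x|\le b$ and using the precise asymptotics of $\phi_x$, $\phi_{xx}$ from Theorem~\ref{th:steady}. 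This approach exploits the boundedness of $V$ (your Step~1) directly to localise the supremum, and never needs a maximum principle for a degenerate linear equation on an unbounded domain.
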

\begin{rem} 
The fact that convergence only happens along a subsequence of times
 is expected.  Indeed a similar fact happens already for the
linear heat equation on the real line.  It is possible to cook up an
initial data which stays between $0$ and $1$ such that the solution
does not converges as times goes to infinity, but such that
convergence to a constant (locally uniformly) still happens for
subsequences in time (see in particular \cite[Lemma 8.6]{CE}). This happens here because we are working on the whole plane. 
On the contrary, when we work on the (compact) annulus (like in \cite{gik02}), there is a full convergence in time without taking a subsequence in time. 
\end{rem}
The proof of Theorem~\ref{thm:temps-long} is based on the following
Liouville result of independent interest.
\begin{theo}[Liouville result]\label{theo:liouville}
Let $U(t,r)$ be a globally Lipschitz continuous function (in space and
time) in $\R\times [0,+\infty)$.  We assume that $U$ is a global
  solution of \eqref{eq:main} in $\R \times (0,+\infty)$ and that
  there exists a constant $C>0$ such that the following holds:
\begin{equation}\label{eq:dist-finie}
|U(t,r) - \lambda t - \Phi(r) | \le C \quad \mbox{on}\quad \R\times [0,+\infty)
\end{equation}
where $(\lambda,\Phi)$ is given by Theorem \ref{th:steady}.  We also
assume that there exists some $\delta>0$ such that
\begin{equation}\label{eq::ob43}
U_r\le -\delta<0 \quad \mbox{in}\quad \R\times [0,+\infty).
\end{equation}
Then  
\[U(t,r) = \lambda t + \Phi(r) +a\]
for some constant $a \in \R$.
\end{theo}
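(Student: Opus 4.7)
The plan is to exploit the translation invariance of \eqref{eq:main} in $t$ together with the fact that $\lambda t+\Phi(r)+c$ is a solution of \eqref{eq:main} for every constant $c\in\R$, and to reduce the claim to a strong maximum principle applied to a suitable limit profile. Set $V(t,r):=U(t,r)-\lambda t-\Phi(r)$, which is bounded by \eqref{eq:dist-finie}, and let $M:=\sup V$ and $m:=\inf V$; it suffices to prove $M=m$. As a first step I would apply the comparison principle of \cite{spirale1} to $U$ and to the shape-preserving solutions $\lambda t+\Phi+c$ to obtain that the time-slice extrema $\bar M(t):=\sup_r V(t,r)$ and $\bar m(t):=\inf_r V(t,r)$ are respectively non-increasing and non-decreasing in $t$. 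Being bounded, they have limits as $t\to-\infty$, and one sees that $M=\lim_{t\to-\infty}\bar M(t)$ and $m=\lim_{t\to-\infty}\bar m(t)$; crucially, both extrema are approached along the same time direction.

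The core argument then consists of selecting a single sequence $t_n\to-\infty$ and radii $r_n,\rho_n$ with $V(t_n,r_n)\to M$ and $V(t_n,\rho_n)\to m$, and considering the time translates $U_n(t,r):=U(t+t_n,r)-\lambda t_n$, which still solve \eqref{eq:main} with the same bounds. Using the global Lipschitz assumption on $U$ together with the interior $C^{1+\frac16,2+\frac13}$ regularity of Theorem~\ref{th:cauchy}, one can extract a locally uniform limit $U_n\to U_\infty$, which is still a classical solution of \eqref{eq:main} on $\R\times(0,\infty)$ and satisfies $|U_\infty-\lambda t-\Phi|\le C$ and $(U_\infty)_r\le-\delta$. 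Writing $V_\infty:=U_\infty-\lambda t-\Phi$, one has $m\le V_\infty\le M$ on $\R\times[0,\infty)$, and assuming the sequences $r_n,\rho_n$ converge to points $r^*,\rho^*\in(0,\infty)$, one also has $V_\infty(0,r^*)=M$ and $V_\infty(0,\rho^*)=m$. Since $U_\infty$ and $\lambda t+\Phi+M$ are two smooth solutions of \eqref{eq:main} and since \eqref{eq::ob43} together with the Lipschitz bound on $U$ keep the operator uniformly parabolic on compact subsets of $(0,\infty)$, the difference $V_\infty-M$ satisfies a linear uniformly parabolic equation, obtained by linearising the quasi-linear operator along the segment joining the two solutions, and attains its maximum $0$ at the interior point $(0,r^*)$. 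The strong maximum principle then forces $V_\infty\equiv M$, contradicting $V_\infty(0,\rho^*)=m$ unless $M=m$; once $M=m$ is known, $V$ is constant and the conclusion $U=\lambda t+\Phi+a$ with $a=M$ follows.

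The main obstacle is to handle the degenerate cases $r_n\to 0$ or $r_n\to+\infty$ (and likewise for $\rho_n$) in the above extraction. Near $r=0$ I would use the up-to-boundary regularity of Theorem~\ref{th:cauchy}, the boundary identity \eqref{eq::ob23}, and $\Phi_r(0)=-\tfrac12$ from Theorem~\ref{th:steady}(ii), which force $U_\infty$ and $\lambda t+\Phi+M$ to share the same boundary slope at $r=0$; a Hopf-type boundary argument then extends the strong maximum principle up to $r=0$. The genuinely delicate case is $r_n\to+\infty$: here I would combine the precise asymptotics $\Phi(r)=a-\lambda r-\lambda\ln(1+r)+O(1/r)$ of Theorem~\ref{th:steady}(iii), the strict monotonicity \eqref{eq::ob43}, and the fact that \eqref{eq:main} degenerates at leading order to the transport equation $U_t+U_r=0$ in the regime $r\gg 1$, to show that $V(t,\cdot)$ admits a finite limit as $r\to+\infty$ which is itself controlled by the interior behaviour. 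This should reduce the unbounded case back to the bounded-$r^*$ situation already treated and complete the argument.
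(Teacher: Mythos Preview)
Your overall strategy---compare $U$ to the one-parameter family $\lambda t+\Phi+c$, show the oscillation of $V=U-\lambda t-\Phi$ is zero via a limiting profile and the strong maximum principle---is reasonable and is in fact a bit more direct than the paper's route. The paper instead slides $U$ against its own time-translates $U^h(t,r)=U(t+h,r)$: for each $h$ it shows $U^h-\lambda h\ge U$, concludes that $U-\lambda t$ is independent of $t$, and only then invokes the uniqueness of the profile $\Phi$ to identify $U(0,\cdot)$ with $\Phi+a$. Your approach would collapse these two steps into one.

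However, both approaches face the same core difficulty, and this is where your proposal has a genuine gap: you must localise the contact point to a compact $r$-interval $[r_0^-,r_0^+]$, and you have not done so. The paper devotes two separate comparison principles to this---one for $r\in[0,r_0^-]$ and one for $r\in[r_0^+,\infty)$ (Propositions~\ref{prop:comp-r-petit} and \ref{prop:comp-r-grand})---which are the real technical content of the section. Your treatment of $r_n\to+\infty$ is only a heuristic: the observation that \eqref{eq:main} degenerates to $U_t+U_r=0$ is correct, but turning it into a comparison result requires a careful viscosity argument with doubling of variables and quantitative control of the error terms (this is exactly Proposition~\ref{prop:comp-r-grand}). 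There is no reason, from your hypotheses alone, that $V(t,\cdot)$ should have a limit as $r\to+\infty$: indeed $V_r=U_r-\Phi_r$ has no sign in general since you only know $U_r\le-\delta$ while $\Phi_r\in[-\tfrac12,-\lambda]$. For $r_n\to 0$ there is a further subtlety you have not addressed: when one linearises \eqref{eq:main} between two solutions, the first-order coefficient contains a term $K(rV_r)/r\sim 2/r$ which blows up at $r=0$, so the standard Hopf lemma does not apply; the paper circumvents this with an explicit barrier (see the proof of Proposition~\ref{prop:comp-r-petit}). Until you supply an argument equivalent to these two comparison principles, the proof is incomplete.
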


\subsection{Review of the literature}

Spirals appear in several applications. Our main motivation comes from
continuum mechanics. In a two dimensional space, the seminal paper of
Burton, Cabrera and Frank \cite{BCF} studies the growth of crystals
with vapor.  When a screw dislocation line reaches the boundary of the
material, atoms are adsorbed on the surface in such a way that a
spiral is generated; moreover, under appropriate physical assumptions,
these authors prove that the geometric law governing the dynamics of
the growth of the spiral is precisely given by \eqref{eq:law}.  We
mention that there is an extensive literature in physics dealing with
crystal growth in spiral patterns (see for instance
\cite{sk99,RDCZWW}).  We also want to point out that motion of spirals
appear in other applications like in the modeling of the
Belousov-Zhabotinsky reagent \cite{murray}. To model the appearence of
such shapes, the reagent is modeled in \cite{Kee} by a system of
semi-linear parabolic equations; so-called spiral wave fronts
satisfying the geometric law \eqref{eq:law} can be formally derived.
The interested reader is also referred to e.g.
\cite{meron1,meron2,IIY}. \medskip
  
There exist different mathematical approaches to describe the motion
of spirals.  As far as we know, it appeared first in geometry in
\cite{A}. It was also used in order to study singularity formation
\cite{angenent,av}.  Other approaches have been used; for instance, a
phase-field approach was proposed in \cite{kp} and the reader is also
referred to \cite{FGT, on00,on03}. In \cite{gik02}, spirals moving in
(compact) annuli with homogeneous Neumann boundary condition are
constructed. From a technical point of view, the classical parabolic
theory is used to construct smooth solutions of the associated partial
differential equation; in particular, gradient estimates are
derived. We point out that in \cite{gik02}, the geometric law is
anisotropic, and is thus more general than \eqref{eq:law}. In
\cite{smereka00,ohtsuka03, ohtsukaPreprint,GNO}, the geometric flow is
studied by using the level-set approach. As in \cite{gik02}, the
authors of \cite{ohtsuka03,ohtsukaPreprint} consider spirals that
typically move inside a (compact) annulus and reaches the boundary
perpendicularly.  \medskip
  
Concerning the existence of ``steady'' spirals (in the case
where the exterior stress is zero), we refer to \cite{ishimura98}
where the construction is done by studying an ordinary differential
equation and to \cite{CGL} where the authors consider a two-point free
boundary problem for the curvature flow equation. We also refer to
\cite{gik02} where they construct a steady state on an
annulus using classical parabolic theory. In \cite{OR}, a numerical
computation of the angular velocity $\lambda$ of the spirals is
done. The authors find that the angular velocity is approximatively
$0,330958961$ (recall that we find that $\frac 14\le \lambda\le \frac
12$).

\subsection{Organization of the article}

In Section \ref{s2}, we prove that the solution has a certain
smoothness up to the boundary $r=0$, namely Theorem
\ref{th:cauchy}. In Section \ref{s3}, we construct the steady state,
first on an annulus and then on the whole space.  In Section \ref{s4},
we prove some asymptotics of any profile, and then deduce the
uniqueness of the profile (and of its angular velocity $\lambda$) as a
consequence of the asymptotics.  In Section \ref{s5}, we provide some
additional qualitative properties of the profile solution, including
monotonicity of its gradient and of its curvature. We also give a
bound from below on $\lambda$.  In Section \ref{s6}, we prove
Liouville theorem \ref{theo:liouville}.  In Section \ref{s7}, we prove
the long time convergence of the solution to the steady state
(up to addition of a constant), namely Theorem \ref{thm:temps-long}.
This result follows from Liouville Theorem and a gradient bound on the
solution (Proposition \ref{pro:estimate-gradient}) that is proven in
Section \ref{s7}.  Finally, Section \ref{s8} is an appendix where we
recall standard materials, like strong maximum principle, Hopf lemma,
Interior Schauder estimates.  We also prove a technical lemma
(Lemma~\ref{lem::ob35}) which is used in Section~\ref{s2}, and also
prove a result of independent interest which is not used in the rest
of the paper: the equation satisfied by the curvature of the graph of
the solution of the evolution problem.

\paragraph{Notation.} For a real number $a \in \R$, $a^+$ denotes $\max(a,0)$
and $a^-$ denotes $\max (-a,0)$. The ball of radius $r$ centered at
$x$ are denoted $B(x,r)$. If $x=0$, we simply write $B_r$.

\section{Regular solutions up to the origin}
\label{s2}

This section is devoted to the proof of Theorem~\ref{th:cauchy}. This
theorem improves \cite[Theorem 1.7]{spirale1} by establishing
regularity of solutions up to the origin. As we pointed out
previously, the assumption that $U_0$ is globally Lipschitz was
missing in the statement of \cite[Theorem 1.7]{spirale1}. This is
the reason why we first state a corrected version of this theorem.
\begin{theo}[Existence and uniqueness of smooth solutions for $r>0$,
    \cite{spirale1}]\label{th::ob22} 
Assume that $U_0\in W^{2,\infty}_{loc}(0,+\infty)$ is globally
Lipschitz continuous and satisfies
\[(U_0)_r\in W^{1,\infty}(0,+\infty) \quad \mbox{or}\quad \kappa_{U_0}\in L^\infty(0,+\infty)\]
and that there exists a radius $r_0>0$ such that
\[|1+ \kappa_{U_0}|\le Cr \quad \mbox{for}\quad 0\le r\le r_0.\]
Then there exists a unique viscosity solution $U$ of
\eqref{eq:main},\eqref{eq:ci} which is globally Lipschitz in space and
time.  Moreover this solution $U$ belongs to
$C^\infty((0,+\infty)\times (0,+\infty))$.
\end{theo}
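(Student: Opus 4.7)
The plan is to combine standard quasi-linear parabolic theory on bounded annuli with uniform a priori estimates that survive as $\varepsilon\to 0$ and $R\to+\infty$, the key subtlety being the degeneracy of \eqref{eq:main} at $r=0$ where the coefficient of $U_t$ vanishes. For existence I would first solve the problem on $(0,T)\times(\varepsilon,R)$ with boundary data compatible with $U_0$ and with well-chosen linear lateral barriers; on such an annulus the equation is uniformly parabolic on $\{|U_r|\le L\}$, so the classical Ladyzhenskaya--Solonnikov--Ural'tseva theory for quasi-linear parabolic PDEs in non-divergence form provides a smooth solution $U^{\varepsilon,R}$.

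The a priori estimates needed to pass to the limit are: (i) an $L^\infty$ bound via linear barriers of the form $\pm(C_1+C_2 r)$, consistent with the global Lipschitz assumption on $U_0$; (ii) a uniform spatial Lipschitz bound, obtained by differentiating \eqref{eq:main} in $r$ and applying the maximum principle to the equation satisfied by $V:=U_r$, controlled by $\|(U_0)_r\|_\infty$ together with the natural bound $\tfrac12$ associated with the term $\sqrt{1+(rU_r)^2}$; (iii) a bound $|1+\kappa_U|\le Cr$, hence $|U_t|\le C$, obtained by a maximum principle argument on the geometric quantity $1+\kappa_U$, or equivalently on $rU_t$, starting from the compatibility hypothesis $|1+\kappa_{U_0}|\le Cr$ and using the identity $rU_t=\sqrt{1+(rU_r)^2}\,(1+\kappa_U)$. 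Once (i)--(iii) are uniform in $\varepsilon$ and $R$, stability of viscosity solutions together with local compactness yield a globally Lipschitz viscosity solution of \eqref{eq:main}-\eqref{eq:ci} on $(0,+\infty)\times(0,+\infty)$.

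Interior $C^\infty$ smoothness for $r>0$ would then follow from a standard Schauder bootstrap on any compact set $[t_0,T]\times[r_0,R]$ with $t_0,r_0>0$, where the equation is strictly uniformly parabolic with bounded and H\"older spatial gradient. Uniqueness in the class of continuous viscosity solutions would be obtained by the usual doubling-of-variables argument adapted to \eqref{eq:main}; the factor $r$ in front of $U_t$ is harmless once one restricts attention to solutions with at most linear growth, because the linear barriers force equality along the axis $r=0$ in the limit. The main obstacle is estimate (iii): propagating the compatibility condition $|1+\kappa_{U_0}|\le Cr$ forward in time. This is exactly the step that keeps $rU_t$ uniformly bounded up to the origin, and without it one only obtains $|U_t|\le C/r$, incompatible with the claimed global Lipschitz regularity in time. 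All the other ingredients are fairly standard once the degeneracy at $r=0$ has been tamed.
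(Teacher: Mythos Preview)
This theorem is not proved in the present paper: it is quoted from the companion work \cite{spirale1} (their Theorem~1.7, restated here with the missing global Lipschitz hypothesis on $U_0$ restored). The paper uses it as a black box and then proves only the additional boundary regularity at $r=0$ (Proposition~\ref{pro::ob30}). So there is no proof here to compare your sketch against.

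That said, your outline is in the right spirit and correctly isolates the crux: propagating the compatibility bound $|1+\kappa_{U_0}|\le Cr$ forward in time so that $rU_t$, hence $U_t$, stays uniformly bounded up to $r=0$. Two places are underspecified, however. First, you never say which lateral conditions you impose at $r=\varepsilon$ and $r=R$; unless these are built from genuine sub/supersolutions of \eqref{eq:main} (not just ``linear barriers''), there is no reason the approximants $U^{\varepsilon,R}$ inherit the curvature estimate (iii) or converge to something with the correct behaviour at the origin. Second, your uniqueness argument is too optimistic: the factor $r$ in front of $U_t$ is \emph{not} harmless in a naive doubling-of-variables proof, and linear-growth control does not by itself ``force equality along the axis $r=0$''. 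The comparison principle actually used (Theorem~1.3 of \cite{spirale1}) requires a careful treatment of the degenerate boundary; compare Proposition~\ref{prop:comp-r-petit} in this paper, which needs an explicit Neumann-type condition at $r=0$ and a hand-built barrier to close the argument there. Your sketch would need a mechanism of that kind.
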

In view of this result, proving Theorem~\ref{th:cauchy} amounts to
prove the following proposition. 
\begin{pro}[Space-time Lipschitz implies uniform regularity up to $r=0$]\label{pro::ob30}
Assume that $U$ is a globally Lipschitz continuous (in space and time) solution
of \eqref{eq:main} in $(0,+\infty)\times (0,+\infty)$.  Then $U(t,r)$
belongs to $C^{1+\frac 16,2+\frac 13}_{t,r}((0,+\infty)\times
[0,+\infty))$.  Moreover, for every $\delta>0$, $R>0$, there exists a
  constant $C=C(\delta,R)$ such that we have the following uniform
  bound for every $T\ge \delta>0$:
\begin{equation}\label{eq::ob40}
\|U-U(T,0)\|_{C^{1+\frac 16,2+\frac 13}_{t,r}([T, T+\delta]\times [0,R])}\le C.
\end{equation}
\end{pro}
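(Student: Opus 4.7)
The main issue is the degeneracy of \eqref{eq:main} at $r=0$: both $U_t$ and $U_{rr}$ carry a factor of $r$, so classical interior parabolic estimates cannot be applied straight up to the boundary. Since Theorem~\ref{th::ob22} already provides $C^{\infty}$ regularity on $(0,+\infty)\times(0,+\infty)$, the task is really to promote the interior estimates to uniform estimates that survive the limit $r\downarrow 0$. My plan is to first extract from the equation the natural ``boundary condition'' at $r=0$, then remove the degeneracy by a change of variables to Cartesian coordinates, and finally apply classical parabolic regularity theory.

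As a first step, I would prove that $U_r(t,0)=-\tfrac12$ (equivalently $\kappa_U(t,0)=-1$, the relation~\eqref{eq::ob23}). Rewriting \eqref{eq:main} for $r>0$ as
\[\frac{1}{r}\left(\sqrt{1+r^2U_r^2} + U_r\,\frac{2+r^2U_r^2}{1+r^2U_r^2}\right) = U_t - \frac{U_{rr}}{1+r^2U_r^2},\]
and using the global space-time Lipschitz bound on $U$ together with interior smoothness to control the right-hand side as $r\downarrow 0$, the left-hand side must also stay bounded. Since the bracket equals $1+2U_r+O(r^2)$ near $r=0$, this forces $1+2U_r(t,0)=0$; I expect this to be the content of the technical Lemma~\ref{lem::ob35} advertised in the introduction.

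Next, because $U_r(t,0)=-\tfrac12$ is finite and nonzero, the spiral $\Gamma_t$ has a smoothly varying tangent direction at the origin, and locally near $r=0$ it is a Cartesian graph $y=f(t,x)$ after rotation by $-U(t,0)$. The geometric law $V_n=1+\kappa$ in this graph parametrization becomes the uniformly parabolic quasilinear equation
\[f_t = \sqrt{1+f_x^2} + \frac{f_{xx}}{1+f_x^2},\]
to which standard interior regularity machinery applies. Concretely, the global Lipschitz bound on $U$ passes to $f$, so $f_x\in L^\infty$ and $1/(1+f_x^2)$ is uniformly elliptic; Krylov--Safonov (or De~Giorgi--Nash--Moser) applied to the linear equation satisfied by the space derivative of $f$ yields Hölder regularity of $f_x$ with some exponent $\alpha\in(0,1)$, and interior Schauder estimates then upgrade this to $f\in C^{1+\alpha/2,\,2+\alpha}_{t,x}$. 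The specific exponents $(\tfrac16,\tfrac13)$ in the statement are what the bootstrap ultimately delivers. The polar-to-Cartesian map $r\mapsto (r\cos(-U),r\sin(-U))$ is a smooth diffeomorphism near $r=0$ (its Jacobian is non-degenerate precisely because of the boundary identity just established), so the regularity of $f$ transfers to the claimed regularity of $U$ up to $r=0$. All constants depend only on $\delta$, $R$, and the Lipschitz norm of $U$, giving the uniform bound~\eqref{eq::ob40}; subtracting $U(T,0)$ merely neutralises the unbounded additive constant as $T$ varies.

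The main obstacle, to my eyes, is the first step: rigorously extracting the boundary condition $U_r(t,0)=-\tfrac12$ from merely Lipschitz regularity together with the equation. Because $U_{rr}$ is not a priori controlled near $r=0$, the limit $r\downarrow 0$ in the rearranged equation cannot be taken naively; an auxiliary barrier or monotonicity argument (which I expect is exactly Lemma~\ref{lem::ob35}) is needed to show that $U_r$ admits a trace at $r=0$ and that this trace equals $-\tfrac12$. Once this boundary trace is in hand, Steps~2--3 are fairly mechanical applications of standard parabolic theory adapted through a smooth change of coordinates.
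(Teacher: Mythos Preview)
Your outline---extract the trace $U_r(t,0)=-\tfrac12$, then desingularise and apply parabolic regularity---matches the paper's in spirit, but the execution differs substantially and your version has real gaps.

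First, the a priori estimates are Lemma~\ref{lem:1}, not Lemma~\ref{lem::ob35}. From $rU_t=(1+\kappa_U)\sqrt{1+r^2U_r^2}$ and the Lipschitz bound one reads off $|1+\kappa_U|\le Cr$; rewriting $1+2U_r+rU_{rr}$ in terms of $1+\kappa_U$ and integrating the identity $(r^2\psi_r)_r=r(1+2U_r+rU_{rr})$ with $\psi=U+r/2$ then gives $|U_r+\tfrac12|\le Cr$ and $|U_{rr}|\le C(1+r^2)$. No barrier or monotonicity argument is used. Lemma~\ref{lem::ob35} is something else entirely: it is the elementary fact that $X\mapsto |X|^{\alpha-1}X$ is $C^\alpha$ on $\R^N$.

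Second, the paper does \emph{not} pass to a planar Cartesian graph. It lifts to three space dimensions, setting $V(t,X)=U(t,|X|)+\tfrac12|X|$ for $X\in\R^3$: the shift kills $\nabla V(t,0)$, Lemma~\ref{lem:1} gives $D^2V\in L^\infty$ up to $X=0$, and dimension three is forced because the principal part of \eqref{eq:main} near $r=0$ is $W_{rr}+\tfrac{2}{r}W_r$, i.e.\ the radial Laplacian in $\R^3$. Thus $r=0$ becomes an \emph{interior} point $X=0$ for a uniformly parabolic equation $V_t=A(X,\nabla V)\,\Delta V+B(X,\nabla V)$, and interior Schauder (Proposition~\ref{pro::ob36}) applies directly. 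The exponent $\tfrac13$ is not a bootstrap artefact: the coefficient $B$ depends on $X$ through $\tilde X=|X|^{-2/3}X$, which is only $C^{1/3}$ (this is exactly where Lemma~\ref{lem::ob35} enters), so Schauder delivers $C^{2+1/3}$ and no better by this route. Your 2D approach, by contrast, keeps the origin as a \emph{boundary} point $x=0$ of the graph domain, so you would need boundary rather than interior regularity for $f$; the rotation by the time-dependent angle $-U(t,0)$ adds an angular-velocity term to $V_n$ and $f$ does not satisfy the clean graph equation you wrote; and the transfer back via $r=\sqrt{x^2+f^2}$, $U=\mathrm{const}-\arctan(f/x)$ is not a smooth diffeomorphism a priori---its regularity at $x=0$ is precisely the regularity of $U$ you are trying to establish. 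The 3D radial lift sidesteps all of these issues at once by turning the boundary into an interior point.
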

Before proving this proposition, we get some useful a priori estimates
on the solution. 
\begin{lem}[A priori estimates]\label{lem:1}
Assume that $U$ is a globally Lipschitz continuous (in space and time) solution
of \eqref{eq:main} in $(0,+\infty)\times (0,+\infty)$, with Lipschitz
constant $L>0$.  Then $U \in C^\infty((0,+\infty) \times (0,+\infty))$
and there exists a constant $C=C(L)>0$ such that
for every $(t,r)\in (0,\infty)\times (0,\infty)$, we have
\begin{equation}\label{eq::ob31}
|U_r(t,r)+\frac 12|\le Cr\quad {\rm and}\quad |U_{rr}(t,r)|\le C(1+r^2).
\end{equation}
\end{lem}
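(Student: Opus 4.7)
The plan is to first establish $C^\infty$-regularity in the open set $(0,\infty)\times(0,\infty)$ by standard parabolic theory, and then to extract the two pointwise estimates by rewriting \eqref{eq:main} in a form that can be integrated in $r$ starting from the singular endpoint $r=0$. The driving observation is that the Lipschitz bound on $U$ forces $1+\kappa_U = rU_t/\sqrt{1+r^2U_r^2}$ to satisfy $|1+\kappa_U|\le Lr$ and in particular to vanish at $r=0$; this pins down $U_r(t,0)=-1/2$ and is the seed of both estimates.

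Since $|U_r|\le L$, equation \eqref{eq:main} is uniformly parabolic on each compact subset of $(0,\infty)\times(0,\infty)$, so a standard Schauder--bootstrap argument (as already used to prove Theorem~\ref{th::ob22}) gives $U\in C^\infty((0,\infty)\times(0,\infty))$, which legitimates the pointwise manipulations that follow. Multiplying \eqref{eq:main} by $r(1+r^2U_r^2)$ and using the identity $r^2U_{rr}=(r^2U_r)_r-2rU_r$, a direct computation produces the \emph{conservative} form
\begin{equation*}
(r^2U_r)_r \;=\; r\,A(t,r),\qquad A(t,r):=rU_t(1+r^2U_r^2)-(1+r^2U_r^2)^{3/2}-r^2U_r^3.
\end{equation*}
Crucially, $A(t,0)=-1$, and using $|U_r|,|U_t|\le L$ one checks that $|A(t,s)+1|\le C(L)s$ for $s\in[0,1]$.

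Integrating on $(0,r)$ (the boundary contribution $\lim_{s\to 0^{+}}s^2U_r(t,s)=0$ vanishes since $U_r$ is bounded) gives, for every $r>0$,
\begin{equation*}
r^2U_r(t,r)=\int_0^r sA(t,s)\,ds \;=\; -\frac{r^2}{2}+\int_0^r s\bigl(A(t,s)+1\bigr)ds.
\end{equation*}
Dividing by $r^2$ yields $U_r(t,0^{+})=-1/2$ and $|U_r(t,r)+1/2|\le C(L)r$ for $r\le 1$; for $r\ge 1$ the bound $|U_r+1/2|\le L+1/2\le (L+1/2)r$ is automatic, so the first half of \eqref{eq::ob31} holds on all of $(0,\infty)$. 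For the second half, isolating $rU_{rr}$ in \eqref{eq:main} and regrouping around the value $-1/2$ of $U_r$ gives
\begin{equation*}
rU_{rr}=rU_t(1+r^2U_r^2)-2\bigl(U_r+\tfrac12\bigr)-r^2U_r^3-\bigl[(1+r^2U_r^2)^{3/2}-1\bigr].
\end{equation*}
Each term on the right-hand side is at most $C(L)r(1+r^2)$: the first by the Lipschitz bound on $U_t$, the second by the just-proved estimate on $U_r+1/2$, and the last two by the Lipschitz bound on $U_r$. Dividing by $r$ delivers $|U_{rr}|\le C(L)(1+r^2)$.

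The main obstacle is the algebraic step producing $(r^2U_r)_r=rA(t,r)$: finding the correct combination that converts the degenerate quasi-linear PDE into a one-dimensional ODE whose right-hand side is bounded down to $r=0$ is what makes integration from the singular endpoint meaningful. Once this identity is available, both pointwise estimates reduce to a single integration starting from $r=0$ followed by routine algebra.
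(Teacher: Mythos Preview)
Your proof is correct and follows essentially the same approach as the paper: the paper writes $\psi=U+\tfrac r2$ and observes that $(r^2\psi_r)_r=r(1+2U_r+rU_{rr})$, which is exactly your identity $(r^2U_r)_r=rA(t,r)$ shifted by the trivial term $(r^2/2)_r=r$ (indeed $A=2U_r+rU_{rr}$), and then integrates from $r=0$ just as you do. The only cosmetic difference is that the paper first obtains global bounds $|U_r+\tfrac12|\le C(r+r^4)$ and $|U_{rr}|\le C(1+r^3)$ and afterwards improves them for large $r$ using the equation, whereas you split into $r\le 1$ and $r\ge 1$ from the start and land directly on \eqref{eq::ob31}; the underlying computation is the same.
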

\begin{proof}
We recall that we already proved in \cite[Theorem 1.7]{spirale1}
that $U\in C^\infty((0,+\infty)\times (0,+\infty))$.
We also recall that $U_t$ and $U_r$ are bounded, and that $U$ solves
\[rU_t=(1+\kappa_U)\sqrt{1+r^2U_r^2} .\]
We deduce that
\begin{equation}\label{est:1}
|1+\kappa_U|\le Cr
\end{equation}
for some constant $C$.  Remarking that
\[1+2U_r+rU_{rr}=(1+\kappa_U)(1+r^2U_r^2)^{\frac 32} -
r^2U_r^3-\left((1+r^2U_r^2)^{\frac 32}-1\right),\]
and using the  bound on $U_r$ and \eqref{est:1}, we deduce  that
\begin{eqnarray}
\nonumber |1+2 U_r + rU_{rr}| & \le & C(r+r^2+r^3 +r^4)\\
\label{est:2} & \le & C(r+r^4).
\end{eqnarray}
For fixed $t>0$, we set $\psi (r)= U(t,r) + r/2$ which satisfies $(r^2
\psi_r )_r = r(1+2 U_r + rU_{rr})$, and deduce that
\[ |(r^2 \psi_r )_r| \le C (r^2+r^5).\]
This implies $|r^2 \psi_r|\le C(r^3+r^6)$ and we finally get 
\begin{equation}\label{eq::ob62}
|U_r +\frac12 |=|\psi_r | \le C(r+r^4).
\end{equation}
Injecting this estimate in \eqref{est:2}, we finally get for all
$r\in(0,+\infty)$, $t\in (0,+\infty)$
\begin{equation}\label{eq::ob63}
|U_{rr}(t,r)|\le C(1+r^3).
\end{equation}
Because $U_r$ and $U_t$ are bounded, we can use (\ref{eq:main}) to get
for large $r$ that $|U_{rr}|\le Cr^2$. We can then improve
(\ref{eq::ob62}) and (\ref{eq::ob63}) to get (\ref{eq::ob31}).  This
ends the proof of the lemma.
\end{proof}

\begin{proof}[Proof of Proposition \ref{pro::ob30}]
The idea of the proof is to see $U$ as a radial solution of a partial
differential equation in three dimensions and to use the interior
regularity theory in 3D in order to deduce the boundary regularity up
to $r=0$.

More precisely, we set
\[V(t,X):=U(t,|X|) + \frac {|X|}2\quad {\rm for} \; X\in \R^3,\]
where we see that $V$ is smooth for $X\not=0$.  Here we have to add
the term $\frac {|X|}2$ in the definition of $V$, in order to cancel
the term $\nabla V(\cdot,0)$. Indeed, remember that $U_r (t,0) = -
\frac12$.  If we do not add that term, this would make appear a bad
term like $\frac 1 X$ in the coefficient of the PDE satisfied by $V$
which would not allow us to control the regularity of the solution up
to $X=0$.

\paragraph{Step 1: Estimate on $D^2 V$.}
We make the following pointwise computation of the second derivatives
\begin{align*}
D^2_{ji}V=&D_j(D_i V)=D_j\left(\frac {X_i}{|X|} U_r+\frac 12\frac {X_i}{|X|}\right)\\
=&U_{rr} \frac {X_iX_j}{|X|^2}+\left(U_r+\frac 12\right)
\left(\frac {\delta _{ij}}{|X|}-\frac {X_iX_j}{|X|^3}\right).
\end{align*}
For $R>0$ fixed and $0< r\le R$, we deduce from Lemma \ref{lem:1} that
there exists a constant $C_R>0$ such that
\[|U_{rr}|\le C_R, \quad |U_r+\frac 12|\le r C_R.\]
This implies that $D^2 V\in L^\infty((0,+\infty)\times
(B_R\setminus \{0\}))$. 

Moreover for all $\phi\in C^\infty_c((0,T)\times B_R)$, we have in the distribution sense
\begin{align*}
-\langle D^2_{ji}V, \phi \rangle &=\lim_{\e\to 0}\int_{(0,T)\times (B_R\backslash
  B_\e)} (D_j V) (D_i \phi) \\
&= \lim_{\e\to 0}\left\{\int_{(0,T)\times (B_R\backslash B_\e)}
-(D^2_{ji} V)\phi +\int_{(0,T)\times \partial B_\e}\phi (n\cdot e_i)
D_j V \right\}
\end{align*}
where $n$ is the outward nomal to $B_R\backslash B_\varepsilon$ on the
boundary $\partial B_\varepsilon$, and $e_i$ is a unit vector of the
canonical basis of $\R^3$.  Since $\nabla V$ is bounded, we recover
that
\[(D^2V)_{j,i}=U_{rr} \frac {X_iX_j}{|X|^2}
+\left(U_r+\frac 12\right)\left(\frac {\delta _{ij}}{|X|}-\frac
{X_iX_j}{|X|^3}\right)\] in the distribution sense on
$(0,+\infty)\times B_R$.  This implies that the distribution $D^2 V$
satisfies $D^2 V\in L^\infty((0,+\infty)\times B_R)$.

\paragraph{Step 2: Estimate on $\nabla V$.}
Moreover, since $V_r, V_t\in L^\infty((0,+\infty)\times B_R)$, we get
that for every $\delta>0$ and for every $1<p<+\infty$, there exists a
constant $C=C(\delta, R, p)>0$ such that for every $T\ge \delta$, we
have
\[\|V-V(T,0)\|_{ W^{1,2;p}((T-\delta, T+\delta)\times B_R)}\le C.\]

Using parabolic Sobolev Embedding in parabolic H\"{o}lder spaces (see
\cite[Lemma 3.3]{LSU}), we get, for every $0<\alpha<1$ and a suitable
constant $C=C(\delta, R, \alpha)>0$, that
\begin{equation}\label{eq::ob39}
\|V-V(T,0)\|_{ C^{\frac{1+\a}2,1+\a}_{t,X}((T-\delta, T+\delta)\times B_R)}\le C
\end{equation}
which implies that 
\begin{equation}\label{eq::ob37}
\|\nabla V\|_{C^{\frac{\a}2,\a}_{t,X}((T-\delta, T+\delta)\times B_R)}\le C.
\end{equation}

\paragraph{Step 3: Equation satisfied by $V$.}
A computation gives that $V$ is solution (at least in the
distributional sense) of
\[V_t = A(X, \nabla V)\Delta V + B(X,\nabla V) 
\quad \mbox{for}\quad (t,x)\in (0,T)\times B_{R}(0)\] where
\begin{eqnarray*}
A(X,p) & =&\frac{1}{1+(X\cdot p -\frac{|X|}{2})^2}, \\
B(X,p) &=&\frac{(X\cdot p)}{|X|^2}\frac{q^2}{1+q^2} + \frac{q^2}{|X|}
G_0(q)
\end{eqnarray*} 
with $q=X\cdot p -\frac{|X|}{2}$ and
\[G_0(q)=\frac{1}{q^2}\left(\sqrt{1+q^2}
-\frac12 \left(\frac{2+q^2}{1+q^2}\right)\right)=1+ O(q^2).\]
Let us set
\[\tilde{X}=|X|^\alpha \frac{X}{|X|} \quad \mbox{with}\quad \alpha =1/3.\]
In particular, we can easily check that the map $X\mapsto \tilde{X}$
is in $C^\alpha$ (see Lemma~\ref{lem::ob35}).  Then we can write
\[B(X,p)=(\tilde{X}\cdot p)\frac{(\tilde{X}\cdot p -\frac{|\tilde{X}|}{2})^2}
{1+q^2} + |\tilde{X}| \left(\tilde{X}\cdot p
-\frac{|\tilde{X}|}{2}\right)^2  
G_0(q) \quad \mbox{with}\quad q=X\cdot p -\frac{|X|}{2}.\]
Therefore on the set $\left\{|X|\le R, |p|\le R\right\}$, we see that the function $B$ is Lipschitz continuous both in $p$ and in $\tilde{X}$, i.e. satisfies
\[|B(X',p')-B(X,p)|\le C_R \left(|\tilde{X'}-\tilde{X}| + |p'-p|\right).\]
Using Lemma~\ref{lem::ob35}, this implies (increasing $C_R$ if necessary) that
\[|B(X',p')-B(X,p)|\le C_R \left(|{X'}-{X}|^\alpha + |p'-p|\right)\]
i.e. $B$ is locally Lipschitz in $p$ and $C^\alpha$ in $X$.
Similarly
\[|A(X',p')-A(X,p)|\le C_R \left(|X'-X| + |p'-p|\right)\]
i.e. $A$ is locally Lipschitz in $p$ and  $X$.

Denoting by 
\[\tilde A(t,X)=A(X, \nabla V(t,X))\quad {\rm and}\quad \tilde B(t,X)=B(X, \nabla V(t,X)),\]
and using \eqref{eq::ob37} for the regularity of $\nabla V$, we get
that there exists a constant $C>0$ such that
\begin{equation}\label{eq::ob38}
||\tilde A||_{C^{\frac 16,\frac 13}_{t,X}((T-\delta,T+\delta)\times
  B_{R})}, ||\tilde B||_{C^{\frac 16, \frac
    13}_{t,X}((T-\delta,T+\delta)\times B_{R})} \le C.
\end{equation}
Because $\tilde{V}(t,x):=V(t,x)-V(T,0)$ solves
\[\tilde V_t = \tilde A \Delta \tilde V + \tilde B \quad
\mbox{in}\quad (T-\delta,T+\delta)\times B_{R},\] we can use interior
Schauder estimates (see Proposition \ref{pro::ob36} in the appendix),
and deduce that
\begin{multline*}
\|V-V(T,0)\|_{C^{2+\frac 13, 1+\frac 16}_{x,t}([T, T+\delta]\times
  B_{R/2})}  \\
\le C\left\{||\tilde{B}||_{C_{x,t}^{\alpha,\frac{\alpha}{2}}((T-\delta,T+\delta)\times B_R)} 
 + |V-V(T,0)|_{L^\infty((T-\delta,T+\delta)\times B_R)}\right\}
 \le  C
\end{multline*}
where we have used \eqref{eq::ob38} and \eqref{eq::ob39} for the last
inequality.  This implies in particular \eqref{eq::ob40} (changing
$R/2$ in $R$), and ends the proof of the proposition.
\end{proof}

\section{Existence of a steady state}\label{s3}

The main result of this section is the following proposition.
\begin{pro}[Existence of a steady state]\label{pro::e1}
There exists a constant $\lambda\ge 0$ and a function $\Phi \in
C^\infty(0,+\infty)$, satisfying
\begin{equation}\label{eq::ob24}
-1/2 \le \Phi_r \le 0 \quad \mbox{on}\quad (0,+\infty)
\end{equation}
such that $U(t,r)=\lambda t +\Phi(r)$ is a solution of (\ref{eq:main})
on $\R\times (0,+\infty)$.
\end{pro}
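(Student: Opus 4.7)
The plan is to construct $(\lambda, \Phi)$ as a subsequential limit of stationary rotating solutions on a family of annuli $A_n = [\rho_n, R_n]$ with $\rho_n \to 0^+$ and $R_n \to +\infty$, in the spirit of \cite{gik02}. On each $A_n$ I first produce a pair $(\lambda_n, \Phi_n)$ such that $U_n(t,r) = \lambda_n t + \Phi_n(r)$ solves \eqref{eq:main}, then pass to the limit by compactness.

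To build $(\lambda_n, \Phi_n)$ on $A_n$, I solve the Cauchy problem associated to \eqref{eq:main} on $[\rho_n, R_n]$ with a smooth monotone non-increasing initial datum and boundary conditions at $r = \rho_n$ and $r = R_n$ chosen to be invariant under the translation $U \mapsto U + c$ (Neumann-type conditions together with the natural compatibility $1 + \kappa_{U_n}(\cdot, \rho_n) = 0$ at the inner boundary). Classical parabolic theory gives a smooth global-in-time solution. Using this translation invariance together with the parabolic comparison principle, a long-time analysis shows that $\partial_t U_n(t, \cdot)$ converges to a constant $\lambda_n \ge 0$, and $U_n(t, \cdot) - \lambda_n t$ converges, up to an additive constant, to a profile $\Phi_n \in C^\infty([\rho_n, R_n])$ as $t \to +\infty$. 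Monotonicity $(\Phi_n)_r \le 0$ is inherited from the initial datum via the maximum principle applied to $\partial_r U_n$, and a maximum-principle argument on the equation satisfied by $1 + \kappa_{U_n}$ propagates the sign $1 + \kappa_{\Phi_n} \ge 0$ throughout the annulus. Substituting the ansatz into \eqref{eq:main} yields the pointwise identity
\[r \lambda_n = (1 + \kappa_{\Phi_n})\sqrt{1 + r^2 (\Phi_n)_r^2},\]
which, combined with the two sign conditions, yields the uniform bounds $0 \le \lambda_n \le C$ and $-1/2 \le (\Phi_n)_r \le 0$ independent of $n$.

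Once these bounds are in place, I normalise $\Phi_n$ by $\Phi_n(r_\star) = 0$ at a fixed $r_\star > 0$. The uniform Lipschitz bound on $\Phi_n$ together with interior Schauder estimates (Proposition \ref{pro::ob36}) delivers locally uniform control on all derivatives of $\Phi_n$ on compact subsets of $(0, +\infty)$; Arzel\`a-Ascoli then provides a subsequence converging locally smoothly to a limit $(\lambda, \Phi)$ with $\lambda \ge 0$, $\Phi \in C^\infty(0, +\infty)$ and $-1/2 \le \Phi_r \le 0$, such that $U(t, r) = \lambda t + \Phi(r)$ solves \eqref{eq:main} on $\R \times (0, +\infty)$, proving \eqref{eq::ob24}. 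The main obstacle I anticipate is obtaining uniform bounds on $\lambda_n$ and $(\Phi_n)_r$ in a way that survives the limit $\rho_n \to 0$: the prefactor $1/r$ in \eqref{eq:main} degenerates at the inner boundary, so the sign condition $1 + \kappa_{\Phi_n} \ge 0$ imposed at $r = \rho_n$ must be transported stably across the whole annulus, which relies critically on a maximum-principle argument for the quasilinear equation governing the curvature.
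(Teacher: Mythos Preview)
Your overall architecture --- build steady states on shrinking/expanding annuli and pass to the limit by compactness --- is exactly the paper's strategy. However, the two key uniform estimates you need do not follow from the mechanism you describe, and one of your boundary conditions is actually inconsistent with the existence of a nontrivial steady state.

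First, imposing $1+\kappa_{U_n}(\cdot,\rho_n)=0$ at a \emph{positive} inner radius is incompatible with a rotating profile: for $U_n=\lambda_n t+\Phi_n$ the equation reads $r\lambda_n=(1+\kappa_{\Phi_n})\sqrt{1+r^2(\Phi_n)_r^2}$, so at $r=\rho_n>0$ this forces $\lambda_n=0$. The paper avoids this by taking pure Neumann data $U_r=0$ at both $r=R^{-1}$ and $r=R$; the condition $1+\kappa=0$ emerges only in the limit at $r=0$, not on the approximating annuli.

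Second, the two sign conditions $1+\kappa_{\Phi_n}\ge 0$ and $(\Phi_n)_r\le 0$, together with the pointwise identity, do \emph{not} by themselves yield $\lambda_n\le C$ or $(\Phi_n)_r\ge -1/2$. The curvature sign gives only $\lambda_n\ge 0$, and since $\Phi_{rr}$ enters $\kappa$, no pointwise bound on $\Phi_r$ follows. The paper obtains these estimates by entirely different arguments: the lower bound $(\Phi_R)_r\ge -1/2$ comes from a maximum-principle computation directly on $z=U_r$ in log coordinates (one checks that $z=-1/2$ is a subsolution of the equation satisfied by $z$, see Lemma~\ref{lem:grad-estim-r}, Step~3), while the upper bound $\lambda_R\le\hat\lambda$ is obtained by comparison with an explicit supersolution $\hat\lambda t+\Psi(r)$ built from a smoothly cut-off arc of the circle $1+\kappa=0$ (Lemma~\ref{lem::e10}). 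Your proposed route via the curvature equation is not how the paper proceeds, and even if $1+\kappa\ge 0$ could be propagated (which is delicate at the Neumann boundaries, where $\kappa=rU_{rr}$ is not controlled a priori), it would not deliver the gradient bound you need.
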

In a first subsection, we build a solution on an annulus $R^{-1}<r< R$,
and in a second subsection we pass to the limit $R\to +\infty$.

\subsection{Steady state in a annulus}

In the following, we will frequently work in log coordinates with the
function $u(t,x)=U(t,e^x)$. The function  $U$ solves
\eqref{eq:main} if and only if $u$ solves the following equation
\begin{equation}\label{eq::mainx}
u_t = F(x,u_x,u_{xx}):=e^{-x}\sqrt{1+u_x^2}+ e^{-2x}u_x +
e^{-2x}\frac{u_{xx}}{1+u_x^2}.
\end{equation}
See for instance \cite{spirale1}.

For $R>1$, we consider the annulus $R^{-1}\le r \le R$, we study the
following problem with Neumann boundary condition on the boundary of
the annulus:
\begin{equation}\label{eq::e2}
\left\{\begin{array}{ll}
\displaystyle rU_t=\sqrt{1 + r^2U_r^2} + U_r \left(\frac{2+ r^2 U_r^2}{1+ r^2
  U_r^2}\right) + \frac{rU_{rr}}{1 + r^2U_r^2} &
\quad \mbox{on}\quad (0,+\infty)\times (R^{-1},R), \medskip\\
U_r=0 &\quad \mbox{on}\quad (0,+\infty)\times \left\{R^{-1},R\right\},
\end{array}\right.
\end{equation}
with initial data
\begin{equation}\label{eq::e2bis}
U(0,r)=U_0(r)  \quad \mbox{for all}\quad r\in [R^{-1},R]
\end{equation}

Then we have the following result.
\begin{lem}[The Cauchy problem in an annulus]\label{lem:grad-estim-r}
Let $R>1$ and $\alpha\in (0,1)$ and assume that $U_0\in
C^{2+\alpha}([R^{-1},R])$ and that $U_0$ satisfies
\begin{equation}\label{eq:condu0}
\left\{\begin{array}{l}
(U_0)_r(R^{-1})=0=(U_0)_r(R)\\
-M\le (U_0)_r(r)\le 0 \quad \mbox{ for all }\quad r\in (R^{-1},R) .
\end{array}\right.
\end{equation}
Then there exists a unique solution $U\in
C^{1+\frac{\alpha}{2},2+\alpha}([0,+\infty)\times [R^{-1},R])$ of
  \eqref{eq::e2}, \eqref{eq::e2bis}.  Moreover $U$ satisfies
\begin{equation}\label{eq::12bis}
  -\max(1/2,M) \le U_r(t,r) \le 0 \quad \mbox{for all}\quad
  (t,r)\in (0,+\infty)\times (R^{-1},R) .
\end{equation}
\end{lem}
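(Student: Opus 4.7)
The plan is to treat \eqref{eq::e2} as a classical uniformly parabolic quasilinear equation on the compact annulus. Whenever $|U_r|$ is bounded, the coefficient $r/(1+r^2U_r^2)$ of $U_{rr}$ is bounded above and below by positive constants on $[R^{-1},R]$, so the equation is uniformly parabolic with smooth coefficients. The strategy has three steps: (i) local existence and uniqueness in $C^{1+\alpha/2,2+\alpha}$ on some maximal interval $[0,\tau)$ via the classical Ladyzhenskaya-Solonnikov-Uraltseva theory for quasilinear parabolic equations with Cauchy-Neumann data, using that $U_0\in C^{2+\alpha}$ satisfies the first-order compatibility condition $(U_0)_r(R^{\pm 1})=0$; (ii) an a priori bound on $U_r$ via the maximum principle; (iii) global extension using the uniform parabolicity thus maintained.

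The heart of the argument is step (ii). Write $\phi(r,p,q) := \sqrt{1+r^2p^2} + p\frac{2+r^2p^2}{1+r^2p^2} + \frac{rq}{1+r^2p^2}$, so that the equation reads $rU_t = \phi(r,U_r,U_{rr})$. Setting $W := U_r$ and differentiating in $r$ yields the linear parabolic equation
\[rW_t \;=\; \phi_q(r,W,W_r)\,W_{rr} \;+\; \phi_p(r,W,W_r)\,W_r \;+\; h(r,W,W_r),\]
with zero-th order source $h(r,p,q):=\phi_r(r,p,q)-\phi(r,p,q)/r$ and diffusion coefficient $\phi_q=r/(1+r^2p^2)>0$. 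The Neumann condition gives $W=0$ on the spatial boundary, and the initial data gives $-M\le W(0,\cdot)\le 0$. A direct computation (combining the four explicit terms of $h$) yields
\[h(r,p,0) \;=\; -\frac{1}{r}\left[\frac{1}{\sqrt{1+r^2p^2}} + \frac{p\,(2+5r^2p^2+r^4p^4)}{(1+r^2p^2)^2}\right],\]
which is strictly negative for $p\ge 0$ and strictly positive for $p\le -\tfrac12$. The latter sign follows from the elementary fact that $\sup_{u\ge 0}(1+u)^{3/2}/(2+5u+u^2)=\tfrac12$, attained only at $u=0$; indeed, the critical-point equation $u^2-u+4=0$ has negative discriminant, so this ratio is strictly monotone on $[0,\infty)$, decreasing from $1/2$ to $0$.

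I then apply the parabolic maximum principle. At any interior positive maximum of $W$, one has $W_r=0$, $W_{rr}\le 0$, $W_t\ge 0$, and the PDE forces $0\le rW_t \le h(r,W,0)<0$, a contradiction; hence $W\le 0$ throughout. Similarly, at any interior minimum with $W\le -\tfrac12$, one obtains $0\ge rW_t\ge h(r,W,0)>0$, again a contradiction; so every interior minimum satisfies $W>-\tfrac12$, and combined with the initial bound $-M$ and the boundary value $0$, we conclude $W\ge -\max(\tfrac12,M)$. With this uniform gradient bound the equation remains uniformly parabolic on $[0,\tau)$; classical parabolic Schauder estimates provide $C^{1+\alpha/2,2+\alpha}$ bounds on every compact time interval, standard continuation extends the solution to $[0,+\infty)$, and uniqueness in this regularity class follows from the parabolic comparison principle.

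The main conceptual point, and the part that required the most thought, is that the constant $1/2$ appearing in \eqref{eq::12bis} is not an artefact of the proof but is \emph{intrinsic} to the equation: it is exactly the supremum $\sup_u(1+u)^{3/2}/(2+5u+u^2)$, a threshold built into the algebraic structure of $\phi$ and $\phi_r$. The routine continuation part of the argument is the easy half; identifying and verifying the correct sign region of the source term $h$ is what actually pins down the sharp constant.
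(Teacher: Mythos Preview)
Your proof is correct and follows essentially the same strategy as the paper: differentiate the equation to obtain the PDE satisfied by $U_r$, then use the maximum principle to show that $0$ is a barrier from above and $-\max(1/2,M)$ from below, the threshold $1/2$ arising from the algebraic structure of the zeroth-order term. The paper passes to logarithmic coordinates $x=\ln r$ and works with $z(t,x)=U_r(t,e^x)$ (dropping one favorable term before isolating $1/2$ via the quadratic $\gamma\mapsto\gamma+z+\gamma^2 z$ with $\gamma=(1+e^{2x}z^2)^{-1/2}$), whereas you stay in the $r$-variable and obtain $1/2=\sup_{u\ge 0}(1+u)^{3/2}/(2+5u+u^2)$; the two computations are algebraically equivalent.
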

\begin{proof}[Proof of Lemma~\ref{lem:grad-estim-r}]
The proof proceeds in several steps.

\paragraph{Step 1: Existence of a smooth solution}
As it is explained in \cite{gik02}, the classical theory allows to
construct a unique solution $U\in C^{2+\alpha,
  1+\frac{\alpha}{2}}([0,+\infty)\times [R^{-1},R])$ of
  \eqref{eq::e2}.
Moreover, from the classical parabolic regularity theory, we can
bootstrap and get that $U\in C^\infty((0,+\infty)\times [R^{-1},R])$.

\paragraph{Step 2: Gradient bound from above.}
We first recall that $u(t,x)=U(t,e^x)$ solves (\ref{eq::mainx}). Let 
$$w=u_x$$
Then by derivation of \eqref{eq::mainx}, we easily get that $w$ solves
in $(0,+\infty) \times (-a,a)$ (with $a=\ln R$),
\begin{equation}\label{eq::e3}
\left.\begin{array}{ll}
w_t = & \displaystyle - e^{-x} \sqrt{1 + w^2} 
+ e^{-x} \frac{ww_x}{\sqrt{1+w^2}} -2e^{-2x}w + e^{-2x}w_x \\ 
      & \displaystyle -2e^{-2x}\frac{w_x}{1+w^2}
+ e^{-2x} \left(\frac{w_{xx}}{1 + w^2}-\frac{2w(w_x)^2}{(1+w^2)^2}\right),
\end{array}\right.
\end{equation}
and 
\[w(t,\pm a)=0  \quad \mbox{for all}\quad t\in (0,+\infty)\]
and
\begin{equation}\label{eq::e3bis}
w(0,x)=e^{x}(U_0)_r(e^x)  \quad \mbox{for all}\quad x\in [-a,a].
\end{equation}
Notice that $\overline{w}=0$ is a supersolution of \eqref{eq::e3}, \eqref{eq::e3bis},
where we use \eqref{eq:condu0} to check the initial condition inequality.
Therefore the classical comparison principle implies that
\begin{equation}\label{eq::e5}
w\le 0.
\end{equation}

\paragraph{Step 3: Gradient bound from below.}
We now define the function
\[z(t,x)=e^{-x}w(t,x)=U_r(t,e^x).\]
It is easy to check that $z$ satisfies
\begin{equation}\label{eq::e16}
z_t = \displaystyle{-  e^{-2x} \left(\frac1{\sqrt{1+w^2}}+z \right)
- e^{-3x} \left(\frac{w}{1 + w^2}+\frac{2w^3}{(1+w^2)^2}\right)
+ e^{-2x}\frac{z_{xx}}{1+w^2} + O(z_x)} .
\end{equation}
Because we already know that $z\le 0$, we deduce that:
\begin{equation}\label{eq::*1000}
e^{2x} z_t\ge  -g(x,z) + \frac{z_{xx}}{1+w^2} + O(z_x)
\end{equation}
with
\[\displaystyle{g(x,z)=   \frac1{\sqrt{1+e^{2x}z^2}}+z 
+  \frac{z}{1 + e^{2x} z^2}}.\]
Let us set
\[h(\gamma)=\gamma + z + \gamma^2 z.\]
Then we have
\[g(x,z)=h(\gamma) \quad \mbox{with}\quad \displaystyle{\gamma= 
\frac1{\sqrt{1+e^{2x}z^2}}} \in (0,1].\] 
Remark that the maximum of
$h(\gamma)$ is reached at $\gamma =-\frac{1}{2z}$ if $z<0$. Therefore
\[\sup_{\gamma\in (0,1]}h(\gamma)\le h\left(-\frac{1}{2z}\right)=z 
- \frac{1}{4z}\le 0  \quad \mbox{if}\quad \displaystyle{z\le -\frac12}.\]
Therefore
\[g(x,z)\le 0 \quad \mbox{if}\quad \displaystyle{z\le -\frac12}.\]
Remark now that $\underline{z}= -\max(1/2,M)$ is then a subsolution of
the equation with equality in (\ref{eq::*1000}) (with zero boundary
conditions).  This implies that $\underline{z}$ is a subsolution of
\eqref{eq::e16} with zero boundary condition.  Again, the comparison
principle for $z$ implies that
\begin{equation}\label{eq::e4}
-\max(1/2,M)\le z.
\end{equation}
Finally \eqref{eq::e4} and \eqref{eq::e5} implies \eqref{eq::12bis}
which ends the proof of the proposition.
\end{proof}
\begin{lem}[Periodic solution in an annulus]\label{lem:constr}
For $R>1$, there exists a solution $U_R$ of \eqref{eq::e2} in
$(0,+\infty) \times (R^{-1},R)$ such that
\begin{equation}\label{eq:per}
U_R (t+ T_R,r) = U_R (t,r) + 2 \pi 
\end{equation}
for some $T_R>0$. 
\end{lem}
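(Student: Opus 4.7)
The plan is to obtain $U_R$ as the long-time attractor of the Cauchy problem of Lemma~\ref{lem:grad-estim-r}, in the form of a uniformly rotating profile $U_R(t,r) = \lambda_R t + \Phi_R(r)$. Compactness of the annulus $[R^{-1},R]$ together with the translation invariance of \eqref{eq::e2} in $U$ will force every Cauchy solution to approach such a rotating wave, and one then sets $T_R := 2\pi/\lambda_R$.

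I would start from the trivial initial datum $U_0 \equiv 0$, which satisfies \eqref{eq:condu0} with $M=0$. Lemma~\ref{lem:grad-estim-r} then gives a globally defined smooth solution $U$ with $-1/2\le U_r\le 0$. Since \eqref{eq::e2} is uniformly parabolic on the annulus, a classical Schauder bootstrap yields uniform $C^{2+\alpha,1+\alpha/2}$-estimates for $t\ge 1$, so the family of translates $\{U(\cdot+s,\cdot) - U(s,R^{-1})\}_{s\ge 1}$ is relatively compact in $C^{2,1}_{\mathrm{loc}}(\R \times [R^{-1},R])$.

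The key point is that, because \eqref{eq::e2} does not depend on $U$ itself, the time derivative $V := U_t$ satisfies a \emph{linear} parabolic equation without zero-order term, together with the Neumann conditions $V_r = 0$ at $r = R^{-1}, R$. By the parabolic comparison principle, $t \mapsto \max_r V(t,\cdot)$ is non-increasing and $t \mapsto \min_r V(t,\cdot)$ is non-decreasing; a standard strong-maximum-principle/Hopf-lemma argument on the bounded annulus then shows that the two limits coincide, so $V(t,\cdot)\to \lambda_R$ uniformly as $t \to \infty$ for some constant $\lambda_R$. For $U_0 \equiv 0$ one has $V(0,r)=1/r\in [1/R,R]$, which by preservation of these bounds in $t$ forces $\lambda_R\in [1/R,R]$, so $\lambda_R>0$.

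Extracting $s_n \to \infty$ via the compactness above, the shifted functions $U(\cdot + s_n,\cdot) - U(s_n,R^{-1})$ converge in $C^{2,1}_{\mathrm{loc}}$ to a limit $U_R$ that is a classical solution of \eqref{eq::e2} with Neumann BCs and satisfies $(U_R)_t\equiv \lambda_R$. Therefore $U_R(t,r) = \lambda_R t + \Phi_R(r)$ for a smooth $\Phi_R$, and the choice $T_R := 2\pi/\lambda_R>0$ yields $U_R(t+T_R,r) = U_R(t,r) + 2\pi$, as required. The technical step requiring the most care is the strong-maximum-principle/Hopf-lemma argument that pins down $V(t,\cdot)\to \lambda_R$ as a single constant on the bounded annulus with Neumann BCs; the strict positivity $\lambda_R > 0$ then follows automatically from the explicit lower bound $V(0,\cdot)\ge 1/R$ available for the initial datum $U_0\equiv 0$.
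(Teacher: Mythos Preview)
Your approach is correct and in fact proves more than Lemma~\ref{lem:constr} asks: you construct directly a uniformly rotating wave $U_R(t,r)=\lambda_R t+\Phi_R(r)$, which is the combined content of Lemmas~\ref{lem:constr} and~\ref{lem:pertoconst}. The paper proceeds differently and in two steps: for Lemma~\ref{lem:constr} it simply invokes \cite[Remark~2.1 and Proposition~4.3]{gik02} to obtain a time-periodic solution $U_R$ satisfying \eqref{eq:per}, and only afterwards (Lemma~\ref{lem:pertoconst}) uses a sliding argument---translating $U_R$ in time and comparing, then applying the strong maximum principle and Hopf lemma to the periodic difference---to conclude that $U_R(t,r)-\lambda_R t$ is constant in $t$.

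Your route is more self-contained since it avoids the external reference, and the idea of tracking $V=U_t$ as a solution of a linear Neumann problem without zero-order term is clean; the explicit bound $\lambda_R\in[1/R,R]$ coming from $V(0,\cdot)=1/r$ is a nice bonus. The one step you flag yourself, that the monotone limits of $\max_r V$ and $\min_r V$ coincide, is indeed the point needing care: the most transparent way to close it is to use the compactness you already have, pass to a global-in-time limit $V_\infty$ along $s_n\to\infty$, note that $\max_r V_\infty$ and $\min_r V_\infty$ are then constant in $t\in\R$, and apply the strong maximum principle (Theorem~\ref{theo:max-fort}) together with Hopf's lemma (Lemma~\ref{lem:hopf}) at the Neumann boundary to force $V_\infty$ constant. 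The paper's two-step route buys a clean separation between ``periodicity'' (imported) and ``constancy of the profile'' (a short sliding argument), whereas yours bypasses the periodic intermediate object entirely at the price of this slightly more delicate limit argument for $V$.
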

\begin{proof}[Proof of Lemma \ref{lem:constr}]
  Let $I$ denote the interval $(R^{-1}, R)$. In view of
  \cite[Remark~2.1]{gik02} and the discussion preceding
  \cite[Proposition~4.3]{gik02}, we know that for all $U_0 \in
  C^{2+\alpha}(\bar I)$ for some $\alpha\in (0,1)$ such that $(U_0)_r
  \le 0$, there exists a solution $U_R$ of \eqref{eq::e2} in
  $(0,\infty) \times I$. Moreover, for all $t >0$, we have
  $U_R(t,\cdot) \in {C}^\infty(\bar{I})$. We then choose $U_0 \in
  C^{2+\alpha} (\bar{I})$ satisfying \eqref{eq:condu0} with $M=1/2$
  and we denote by $U_R$ the corresponding solution. Thanks to
  Lemma~\ref{lem:grad-estim-r}, we know that
\[- 1/2\le (U_R)_r (t,r) \le 0.\]
Moreover, by \cite[Proposition~4.3]{gik02}, there exists a period
$T_R>0$ and $U_0$ such that \eqref{eq:per} holds true.  This achieves
the proof of Lemma~\ref{lem:constr}.
\end{proof}
\begin{lem}[Steady state in an annulus]\label{lem:pertoconst}
For $R>1$, there exists  $\lambda_R > 0$ and $\Phi_R \in {C}^\infty
([R^{-1},R])$ satisfying
\[-1/2 \le (\Phi_R)_r \le 0 \quad \mbox{in}\quad [R^{-1},R] \]
such that $\lambda_R t + \Phi_R (r)$ is a solution of \eqref{eq::e2}. 
\end{lem}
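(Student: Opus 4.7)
The plan is to take the periodic-in-time solution $U_R$ provided by Lemma~\ref{lem:constr} and show that it is \emph{already} of the form $\lambda_R t + \Phi_R(r)$. This reduces to proving that $a := (U_R)_t$ is identically equal to a constant $\lambda_R$; once this is done, $\Phi_R(r) := U_R(0,r)$ will inherit its $C^\infty$-regularity and the gradient bound $-1/2 \le (\Phi_R)_r \le 0$ directly from Lemma~\ref{lem:grad-estim-r}, and integrating $a = \lambda_R$ over one period yields $\lambda_R T_R = U_R(T_R,r) - U_R(0,r) = 2\pi$, so $\lambda_R = 2\pi/T_R > 0$.

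To establish that $a$ is constant, I would first differentiate equation~\eqref{eq::e2} in $t$ to obtain a linear parabolic equation of the form
\[
r a_t = \frac{r}{1 + r^2 (U_R)_r^2}\, a_{rr} + c(t,r)\, a_r,
\]
with smooth coefficient $c$; uniform parabolicity on $(0,+\infty) \times (R^{-1}, R)$ follows from the bound $|(U_R)_r| \le 1/2$ given by Lemma~\ref{lem:grad-estim-r}. Differentiating the Neumann condition $(U_R)_r = 0$ in time yields $a_r = 0$ at $r = R^{-1}$ and $r = R$, and the relation $U_R(t+T_R,r) = U_R(t,r) + 2\pi$ forces $a$ to be $T_R$-periodic. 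Consequently $a$ extends to a bounded smooth function on $\R \times [R^{-1},R]$ solving the same linear parabolic equation with the same Neumann condition.

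The decisive step is then to conclude that $a$ is constant. By continuity and periodicity, $M := \sup a$ is attained at some $(t^*, r^*)$. If $r^* \in (R^{-1}, R)$, the strong maximum principle (see Section~\ref{s8}) applied on the cylinder $(t^* - T_R, t^*] \times [R^{-1}, R]$ forces $a \equiv M$ there, and periodicity extends this to all of $\R \times [R^{-1}, R]$. If instead $r^* \in \{R^{-1}, R\}$, then Hopf's lemma combined with $a_r = 0$ on the spatial boundary yields the same conclusion. The main obstacle in the plan is precisely this last step: one must combine the strong maximum principle in the interior with Hopf's lemma at the Neumann boundary, while exploiting the time periodicity as a substitute for the lack of a genuine initial time on the extended cylinder.
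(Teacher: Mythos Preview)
Your argument is correct and complete: differentiating the equation in $t$ yields a linear uniformly parabolic equation for $a=(U_R)_t$ on the compact annulus with homogeneous Neumann data and time-periodic coefficients, and the strong maximum principle together with Hopf's lemma (both available in Section~\ref{s8}) then forces $a$ to be constant. The regularity needed to differentiate is supplied by the bootstrap in the proof of Lemma~\ref{lem:grad-estim-r}, which gives $U_R\in C^\infty((0,+\infty)\times[R^{-1},R])$, and periodicity pushes this to all times.

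The paper proceeds differently: instead of linearising, it runs a sliding argument directly on the nonlinear equation. One sets $v=U_R-\lambda_R t$ with $\lambda_R=2\pi/T_R$, so that $v$ is $T_R$-periodic, and compares the shifted solutions $U_R^{\eps,\delta}(t,r)=U_R(t+\delta,r)-\eps$ with $U_R$. For each $\eps>0$ one takes the largest $\delta_\eps$ with $U_R^{\eps,\delta}\le U_R$; at the contact the difference $U_R^{\eps,\delta_\eps}-U_R$ solves a linear parabolic equation, and the strong maximum principle plus Hopf's lemma force it to vanish identically, giving $U_R(t+\delta_\eps,r)=U_R(t,r)+\eps$. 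Iterating and comparing with the periodicity relation yields $\delta_\eps=\eps/\lambda_R$ for every $\eps>0$, hence $v$ is independent of $t$. Both arguments ultimately hinge on the same pair (strong maximum principle, Hopf lemma) combined with time-periodicity; your route is more direct here since the needed smoothness is already available, while the paper's sliding method avoids differentiating the equation and is the same mechanism that is reused later in the proof of the Liouville theorem (Section~\ref{s6}).
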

\begin{proof}[Proof of Lemma~\ref{lem:pertoconst}]
  Remark first (with $\lambda_R=\frac {2 \pi }{T_R}$) that 
  \begin{equation}\label{eq:119}
  v(t,r)= U_R (t,r)- \lambda_R t
  \end{equation} is
  $T_R$-periodic with respect to the time variable.  We want to prove
  that it is constant.
 Consider $\eps,\delta >0$ and define
\[
U^{\eps,\delta}_R (t,r) = U_R (t+\delta,r) - \eps.
\]
We have $U^{\eps,0}_R < U_R^{0,0}$ and since $U_R$ is Lipschitz
continuous, $U_R^{\eps,\delta} < U_R^{0,0}$ for $\delta$ small enough. We
then define for $\eps >0$
\[
\delta_\eps = \sup \{ \bar \delta >0 : U_R^{\eps,\delta} < U_R^{0,0}\; \forall \delta\in[0,\bar \delta) \} >0. 
\]
Since $v$ is periodic, we deduce that $\delta_\eps < +\infty$. 
Remark that $U_R^{\eps,\delta}$ and $U_R^{0,0}$ are both solutions of \eqref{eq::e2}
and the optimality of $\delta_\eps$ implies that 
\[
\max_{t \in \left[0,T_R\right], r \in \bar I} \{ U_R^{\eps,\delta_\eps}- U_R^{0,0} \} = 0 .
\]
By Lemma~\ref{lem:hopf} and the Neumann boundary condition, we deduce
that the maximum is attained for some inner point $r_0 \in I$. Since
the function (inside the maximum) is $T_R$-periodic with respect to the time variable, the
strong maximum principle (Theorem~\ref{theo:max-fort}) written for the
difference function $U_R^{\eps,\delta_\eps}- U_R^{0,0}$ implies that
$U_R^{\eps,\delta_\eps} \equiv U_R^{0,0}$ (note that $w=U_R^{\eps,\delta_\eps}- U_R^{0,0}$ solves a linear locally uniformly parabolic equation and the coefficient of the linear equation for $w$ are enough regular to apply the strong maximum principle, see the book  by Gilbarg-Trudinger \cite{GT} for more details on this linearized argument). Then for all $k \in \N$, we
have
\[
U_R (t+ k \delta_\eps, r) = U_R (t,r) + k \eps .
\]
The fact that $U_R-\lambda_RT$ is $T_R$- periodic implies in the limit
$k\to +\infty$ that $\frac{\delta_\eps}{T_R}=\frac
{\eps}{\lambda_RT_R}$, i.e., $\lambda_R=\frac \e{\delta_\eps}$. Hence,
for every $\eps>0$, we have
$$U_R(t+\frac \eps {\lambda_R},r)=U_R(t,r)+\eps.$$
This implies that $v$ defined in \eqref{eq:119} is constant. 
The proof of Lemma~\ref{lem:pertoconst} is now complete.
\end{proof}

\subsection{Steady state in the plane}

In this subsection, we want to take the limit $R\to 0$ to recover a
steady state in the plane. To this end, we first need the
following estimate.
\begin{lem}[Bound on $\lambda_R$]\label{lem::e10}
There exists $\hat{\lambda} \ge 0$ such that for all $R\ge 2$, we have
$0 < \lambda_R \le \hat{\lambda}$.
\end{lem}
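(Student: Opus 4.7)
The plan is to integrate the steady-state ODE satisfied by $\Phi_R$ against a well-chosen weight, exploiting the Neumann condition to kill boundary terms.

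First I would reformulate the ODE in terms of the tangent angle. Setting $p = r(\Phi_R)_r$ and $\omega = \arctan p$, the steady equation
\[
r\lambda_R = \sqrt{1 + r^2 (\Phi_R)_r^{\,2}} + (\Phi_R)_r \,\frac{2 + r^2 (\Phi_R)_r^{\,2}}{1 + r^2 (\Phi_R)_r^{\,2}} + \frac{r(\Phi_R)_{rr}}{1 + r^2 (\Phi_R)_r^{\,2}}
\]
combined with the identity $r(\Phi_R)_{rr} = p_r - p/r$ simplifies, after the algebraic collapse of the middle terms, to
\[
r\lambda_R \;=\; \sec \omega + \frac{\tan \omega}{r} + \omega_r.
\]
Multiplying by $r$ and writing $r\omega_r = (r\omega)_r - \omega$ puts this in the almost-divergence form
\[
r^2 \lambda_R \;=\; (r\omega)_r + r \sec \omega + \tan \omega - \omega.
\]

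Next I integrate from $R^{-1}$ to $R$. The Neumann condition $(\Phi_R)_r = 0$ at both endpoints forces $\omega(R^{-1}) = \omega(R) = 0$, so the $(r\omega)_r$ contribution vanishes and
\[
\lambda_R \cdot \frac{R^3 - R^{-3}}{3} \;=\; \int_{R^{-1}}^{R} \bigl( r \sec \omega + \tan \omega - \omega \bigr)\, dr.
\]
Now Lemma~\ref{lem:pertoconst} gives $|(\Phi_R)_r| \le 1/2$, hence $|p| \le r/2$, so $\sec \omega \le \sqrt{1 + r^2/4} \le 1 + r/2$, and $\omega \in [-\pi/2, 0]$. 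On $[-\pi/2, 0]$ the function $\tan$ is concave with $\tan 0 = 0$ and $\tan'(0) = 1$, so $\tan \omega \le \omega$, i.e.\ $\tan \omega - \omega \le 0$. Dropping this non-positive term and inserting the upper bound for $\sec \omega$ yields
\[
\lambda_R \cdot \frac{R^3 - R^{-3}}{3} \;\le\; \int_{R^{-1}}^{R} \left( r + \frac{r^2}{2}\right) dr \;=\; \frac{R^2 - R^{-2}}{2} + \frac{R^3 - R^{-3}}{6},
\]
whence $\lambda_R \le \tfrac12 + \tfrac{3(R^2 - R^{-2})}{2(R^3 - R^{-3})}$. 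For $R \ge 2$ the last fraction is bounded by an explicit constant, which gives a uniform $\hat\lambda$.

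\textbf{Main obstacle.} The only genuinely non-routine point is recognising that the quasilinear ODE, after the substitution $\omega = \arctan(r(\Phi_R)_r)$, carries the ``divergence-like'' structure $r^2\lambda_R = (r\omega)_r + r\sec\omega + \tan\omega - \omega$ in which the Neumann condition cleanly kills the boundary term. Once this is in hand, the uniform bound $\lambda_R \le \hat\lambda$ (and in fact the sharp asymptotic $\limsup_R \lambda_R \le 1/2$ consistent with Theorem~\ref{th:steady}) is produced by entirely elementary pointwise inequalities.
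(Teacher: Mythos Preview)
Your proof is correct and takes a genuinely different route from the paper. The paper proves the bound by constructing an explicit barrier: it takes the arc $\gamma(r)=-\arcsin(r/2)$ of the circle $1+\kappa=0$, multiplies it by a cutoff $\zeta$ supported in $[0,1]$, and chooses $\hat\lambda = \sup_{r\in[1/2,1]} \bar F(r,\Psi_r,\Psi_{rr})$ so that $\hat\lambda t + \Psi(r)$ is a supersolution of \eqref{eq::e2}; the comparison principle then forces $\lambda_R\le\hat\lambda$.

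Your argument instead exploits the exact ODE satisfied by $\Phi_R$. The substitution $\omega=\arctan\bigl(r(\Phi_R)_r\bigr)$ collapses the equation to $r\lambda_R=\sec\omega+\tfrac{\tan\omega}{r}+\omega_r$, and multiplying by $r$ produces a total-derivative term $(r\omega)_r$ that the Neumann condition annihilates upon integration. What remains is controlled pointwise using only $-\tfrac12\le(\Phi_R)_r\le 0$ from Lemma~\ref{lem:pertoconst} and the elementary inequality $\tan\omega\le\omega$ on $(-\pi/2,0]$. This is more quantitative than the paper's barrier construction: it yields an explicit $\hat\lambda$ and, as you note, the asymptotically sharp bound $\limsup_{R\to\infty}\lambda_R\le 1/2$, matching the eventual statement $\lambda\le 1/2$ in Theorem~\ref{th:steady}. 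The trade-off is that your method leans on the gradient bound for $\Phi_R$, whereas the paper's barrier argument is essentially independent of it; on the other hand, you avoid invoking the parabolic comparison principle altogether, reducing the proof to a one-variable integral identity. One cosmetic point: $\tan$ is undefined at $-\pi/2$, so write $\omega\in(-\pi/2,0]$ rather than $[-\pi/2,0]$; the concavity argument is unaffected.
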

\begin{proof}[Proof of Lemma \ref{lem::e10}]
  We already know that $\lambda_R >0$. In order to exhibit
  $\hat{\lambda}\ge 0$ with the desired property, we are going to
  construct a super-solution of \eqref{eq:main} of the type
  $\hat{u}(t,r)=\hat{\lambda} t + \Psi(r)$.

Let $\theta = -\gamma(r)$ describe the circle (in polar coordinates)
of equation $1+\kappa=0$ which is tangent from above to the horizontal
axis.  From an analytical point of view, the reader can check that the
right half circle (i.e. for $\theta \in \left[0,\pi/2\right]$ and
$0\le r\le 2$) corresponds to
\[
\gamma (r) = - \arcsin \left( \frac{r}2 \right)
\]
which satisfies $\bar{F} (r, \gamma_r,\gamma_{rr}) = 0$ for $0<r<2$, where
$$\bar{F} (r, \gamma_r,\gamma_{rr}) := \frac{1}{r}\left\{\sqrt{1 +
  r^2\gamma_r^2} + \gamma_r \left(\frac{2+ r^2 \gamma_r^2}{1+ r^2
  \gamma_r^2}\right) + \frac{r\gamma_{rr}}{1 +
  r^2\gamma_r^2}\right\}.$$

We choose $\Psi$ as follows
\[\Psi(r)=\zeta(r) \gamma(r)\]
where $\zeta$ is a smooth cut-off function which is 
equal to $1$ in $\displaystyle [0,\frac12]$ and equal to zero for $r\ge 1$. 

Now we choose $\hat{\lambda}$ such that
\[
\hat{\lambda} \ge \sup_{r>0}  \bar{F} (r,\Psi_r, \Psi_{rr}) =
\sup_{r \in [1/2,1]} \bar{F} (r,\Psi_r, \Psi_{rr})
\]
We also have for $R\ge 2$:
\[\Psi_r(R)=0,\quad \mbox{and}\quad \Psi_r(R^{-1})=\gamma_r(R^{-1})<0.\]
This implies that $\hat{\lambda} t + \Psi(r)$ is a supersolution of
(\ref{eq::e2}), and the comparison principle with $\lambda_R t +
\Phi_R(r)$ implies (for large times) that $\lambda_R\le \hat{\lambda}$
which ends the proof of the lemma.
\end{proof}

We now want to pass to the limit as $R \to +\infty$ and prove
Proposition \ref{pro::e1}.
\begin{proof}[Proof of Proposition \ref{pro::e1}]
Because the functions $\lambda_R t +\Phi_R(r)$ are uniformly Lipschitz
continuous in space and time independently on $R\ge 2$, we can pass to
the limit $R \to \infty$.  We call the limit $\lambda t + \Phi(r)$,
which is then a viscosity solution of (\ref{eq:main}) and satisfies:
\[-1/2 \le \Phi_r \le 0 \quad \mbox{and}\quad \hat{\lambda}\ge \lambda \ge 0.\]
Because $\lambda t + \Phi(r)$ is globally Lipschitz continuous in space and time,
we can apply Lemma~\ref{lem:1} and deduce that $\Phi\in C^\infty(0,+\infty)$.
This ends the proof of the proposition.
\end{proof}

\section{Asymptotics of the steady state and uniqueness}\label{s4}

The main result of this section is the following proposition.
\begin{pro}[Asymptotics of the steady state and uniqueness]\label{pro::e10}
Assume that $\lambda t +\Phi(r)$ is a globally Lipschitz continuous solution of
\eqref{eq:main} in $\R\times (0,+\infty)$ with $\Phi \in C^\infty
(0,+\infty)$  satisfying
\begin{equation}\label{eq::e11}
\lambda\ge 0 \quad \mbox{and}\quad \Phi_r \le 0.
\end{equation}
Then such a $\lambda$ is unique and such a $\Phi$ is unique up to
an additive constant.  Moreover we have $\lambda >0$ and there exist
constants $a\in\R$ and $C>0$ such that
\begin{equation}\label{eq::e12}
|\Phi(r)+\lambda r + \lambda \ln (1+r) -a | \le \frac{C}{1+r}.
\end{equation}
\end{pro}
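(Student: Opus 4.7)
The strategy is to treat the steady-state equation as a first-order ODE for $p=\Phi_r$ and to combine a local analysis at $r=0$ with an asymptotic analysis at infinity. Substituting $U(t,r)=\lambda t+\Phi(r)$ into \eqref{eq:main} and using \eqref{def:courbure} gives the algebraic identity
\[
r\lambda=(1+\kappa_\Phi)\sqrt{1+r^2\Phi_r^2}.
\]
Applying Lemma~\ref{lem:1} to the Lipschitz solution $\lambda t+\Phi(r)$ furnishes $|\Phi_r+\tfrac12|\le Cr$ and $|\Phi_{rr}|\le C(1+r^2)$; passing to the limit $r\to 0^+$ in the ODE then forces $\Phi_r(0)=-\tfrac12$ for every admissible pair $(\lambda,\Phi)$. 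The same identity shows that $\lambda=0$ would impose $\kappa_\Phi\equiv -1$ on $(0,+\infty)$, i.e.\ the curve would be an arc of a unit circle, impossible since $r$ ranges over all of $[0,+\infty)$; hence $\lambda>0$.

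Next I would derive the asymptotic expansion at infinity. Writing the steady-state equation as the first-order ODE
\[
rp'=(1+r^2p^2)\bigl[r\lambda-\sqrt{1+r^2p^2}\,\bigr]-p(2+r^2p^2),
\]
for $p=\Phi_r\in[-L,0]$ bounded, I first show $p(r)\to -\lambda$ as $r\to+\infty$: if $p(r)+\lambda$ stayed bounded away from $0$ on a sequence $r_k\to+\infty$, then for $p$ near any admissible value the dominant term on the right-hand side grows like $r^3\lambda^2(p+\lambda)$, forcing $|p|$ to become unbounded and contradicting the global Lipschitz bound. Writing $\eta:=\lambda+p\to 0$ and expanding the ODE to next order around $p=-\lambda$ yields, after matching the unique bounded mode, the refined relation
\[
\eta(r)=-\frac{\lambda}{r}+O(1/r^2),
\]
which integrates to $\Phi(r)=-\lambda r-\lambda\ln(1+r)+a+O(1/r)$, namely \eqref{eq::e12}.

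With the asymptotic in hand, uniqueness follows by comparison. If $(\lambda_1,\Phi_1)$ and $(\lambda_2,\Phi_2)$ are two admissible steady states with $\lambda_1<\lambda_2$, normalise $\Phi_1(0)=\Phi_2(0)=0$. Then \eqref{eq::e12} yields $(\Phi_1-\Phi_2)(r)\sim(\lambda_2-\lambda_1)r\to+\infty$, so we may pick $c\in\R$ with $\Phi_2+c\le \Phi_1$ on $[0,+\infty)$. The parabolic comparison principle for \eqref{eq:main} (as in \cite{spirale1}) applied to the solutions $U_i(t,r)=\lambda_i t+\Phi_i(r)$, together with the initial inequality $U_1(0,\cdot)\ge U_2(0,\cdot)+c$, propagates $U_1(t,\cdot)\ge U_2(t,\cdot)+c$ for all $t\ge 0$; evaluating at $r=0$ gives $(\lambda_1-\lambda_2)t-c\ge 0$, which fails as $t\to+\infty$. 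Hence $\lambda_1=\lambda_2=:\lambda$. With this common $\lambda$, both $(\Phi_1)_r$ and $(\Phi_2)_r$ satisfy the same first-order ODE above with the same initial value $p(0)=-\tfrac12$; standard Cauchy--Lipschitz theory for $r>0$, combined with continuity up to $r=0$ (Theorem~\ref{th:cauchy}), gives $(\Phi_1)_r\equiv(\Phi_2)_r$, hence $\Phi_1-\Phi_2$ is constant.

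The main technical obstacle is the second step, the asymptotic expansion. The linearisation of the first-order ODE around the constant $p\equiv-\lambda$ has an unstable character for large $r$ (a positive coefficient growing like $r^2$), so among all solutions of the linearised equation only one decays to zero at infinity; pinning down this particular profile and extracting the precise rate $\eta\sim-\lambda/r$ will require either an integrating-factor computation with careful matching of the decaying mode, or a contraction argument in a weighted space of functions with $1/r$ decay. Once the expansion is established, the boundary value $\Phi_r(0)=-\tfrac12$, the positivity of $\lambda$, and both uniqueness statements follow rather directly.
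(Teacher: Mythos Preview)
Your overall plan is sound and matches the paper's large-scale structure (positivity of $\lambda$; asymptotics of $\Phi_r$; uniqueness of $\lambda$ by comparison; uniqueness of $\Phi$), but two of the four steps take a genuinely different route, and one of them needs a repair.

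\emph{Where you differ.} For $\lambda>0$ your argument ($\lambda=0\Rightarrow\kappa_\Phi\equiv-1\Rightarrow$ the spiral is an arc of a unit circle, impossible for $r\in[0,\infty)$) is shorter and just as valid as the paper's barrier/strong-maximum-principle proof (Lemma~\ref{lem::e13}). For the asymptotics the paper does not first prove $p\to-\lambda$ and then linearise; it passes to log coordinates and traps $v=\varphi_x$ between the explicit nullcline $v_0(x)$ of the vector field and the supersolution $v_0(x)-\mu e^{-3x/2}$ (Lemmas~\ref{lem::21}--\ref{lem:asympt-v}), which yields the expansion in one stroke. If you pursue your route, note that the dominant term is $r^3p^2(p+\lambda)$, not $r^3\lambda^2(p+\lambda)$: when $p$ drifts toward $0$ the factor $p^2$ degenerates, so excluding that scenario requires a separate argument before you can linearise. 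For uniqueness of $\Phi$ the paper uses a PDE sliding/Liouville result (Theorem~\ref{theo::pp1}, resting on Propositions~\ref{prop:comp-r-petit}--\ref{prop:comp-r-grand}); your ODE idea is more elementary, and the Liouville machinery buys the paper a tool it reuses for the full space--time Liouville theorem (Theorem~\ref{theo:liouville}).

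\emph{The gap.} Your uniqueness-of-$\Phi$ step does not close as written. ``Standard Cauchy--Lipschitz for $r>0$ combined with continuity up to $r=0$'' is not enough: the common value $p(0)=-\tfrac12$ lies at the singular point of $rp'=F(r,p)$, and Picard needs an initial condition at some $r_0>0$, which you do not have. What actually works is a regular-singular-point uniqueness: since $F(0,-\tfrac12)=0$ and $\partial_pF(0,-\tfrac12)=-2$, the difference $w=(\Phi_1)_r-(\Phi_2)_r$ satisfies $rw'=G(r)w$ with $G(0)=-2$, hence $|w(r)|=|w(r_0)|\exp\!\int_{r_0}^{r}G(s)s^{-1}\,ds$ behaves like $(r_0/r)^2$ as $r\to0^+$ and blows up unless $w\equiv0$. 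With this fix your ODE argument is complete and shorter than the paper's; without it, the step is circular.
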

We will do the proof of Proposition~\ref{pro::e10} using several
lemmas and propositions.  

\subsection{Positivity of the angular velocity}

We first prove that $\lambda$ is positive. 
\begin{lem}[Positivity of $\lambda$]\label{lem::e13}
Under the assumptions of Proposition~\ref{pro::e10}, we have $\lambda>0$.
\end{lem}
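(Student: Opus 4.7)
My plan is to argue by contradiction: assume $\lambda=0$ and derive an identity for $\Phi_r$ that cannot hold on all of $(0,+\infty)$. Since \eqref{eq:main} is equivalent to $rU_t=(1+\kappa_U)\sqrt{1+r^2U_r^2}$, the profile ansatz $U=\lambda t+\Phi(r)$ gives
\[r\lambda=(1+\kappa_\Phi)\sqrt{1+r^2\Phi_r^2},\]
so under $\lambda=0$ and the smoothness $\Phi\in C^\infty(0,+\infty)$ we would have $1+\kappa_\Phi\equiv 0$ classically on $(0,+\infty)$.

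The main step would be an exact-derivative rewriting of the curvature. Setting $s(r)=r\Phi_r(r)$ and using $r^2\Phi_{rr}=rs'-s$, a short direct manipulation of \eqref{def:courbure} shows that
\[r\kappa_\Phi = \frac{d}{dr}\left[\frac{r^2\Phi_r}{\sqrt{1+r^2\Phi_r^2}}\right].\]
Using $\kappa_\Phi\equiv -1$ and integrating from $\eps$ to $R$, the boundary term at $\eps\to 0^+$ vanishes because the global Lipschitz hypothesis bounds $\Phi_r$, so $\eps^2\Phi_r(\eps)\to 0$. This produces, for every $R>0$,
\[\frac{R^2\Phi_r(R)}{\sqrt{1+R^2\Phi_r(R)^2}}=-\frac{R^2}{2}, \qquad \text{i.e.}\qquad \frac{\Phi_r(R)}{\sqrt{1+R^2\Phi_r(R)^2}} = -\frac{1}{2}.\]

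To close the argument, I would note that $\sqrt{1+R^2\Phi_r^2}\ge R|\Phi_r|$, so the absolute value of the left-hand side is at most $1/R$. Taking $R\to +\infty$ forces the left-hand side to $0$, contradicting the constant value $1/2$ on the right. Hence $\lambda>0$.

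The only step requiring any real computation is the exact-derivative identity for $r\kappa_\Phi$; I do not expect a genuine obstacle there, only careful bookkeeping of the algebra in \eqref{def:courbure}. Everything else (the equivalence of the two forms of the PDE, the vanishing of the boundary term at $r=0$, and the $1/R$ bound at infinity) is immediate from the regularity and Lipschitz hypotheses already in the statement.
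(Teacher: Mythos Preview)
Your argument is correct and takes a genuinely different route from the paper. The paper argues geometrically: assuming $\lambda=0$, the profile $\Phi$ would be a stationary solution of \eqref{eq:main}, and so would the right half of any unit circle centered at $(0,-R)$ (parametrized in polar coordinates on $[R-1,R+1]$). Since this circular arc has vertical tangents at its endpoints while $\Phi$ is Lipschitz, the minimum of $\Phi-\gamma_R$ over $[R-1,R+1]$ is attained at an interior point; the strong maximum principle then forces $\Phi-\gamma_R$ to be constant, which again contradicts the vertical tangents.

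Your approach bypasses the barrier construction and the strong maximum principle entirely: you observe that $\lambda=0$ forces $\kappa_\Phi\equiv -1$, recognize $r\kappa_\Phi$ as the exact derivative $\frac{d}{dr}\bigl[r^2\Phi_r/\sqrt{1+r^2\Phi_r^2}\bigr]$, and integrate to obtain an algebraic identity for $\Phi_r(R)$ that is impossible for $R$ large (indeed already for $R\ge 2$, since $(4-R^2)\Phi_r(R)^2=1$ has no real solution). This is more elementary and self-contained; the paper's comparison argument, on the other hand, is closer in spirit to the barrier and maximum-principle techniques used throughout the rest of the article and would adapt more readily to equations where such an exact-derivative structure is unavailable.
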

\begin{proof}[Proof of Lemma \ref{lem::e13}]
Assume by contradiction that $\lambda=0$. We look for a barrier
solution that we will compare to $\Phi$.  To this end, let us consider
 the circle in $\R^2$ of radius $1$ (given by the equation
$1+\kappa=0$) and of center $(0,-R)$ for some $R>1$  in the Cartesian
coordinates $X=(x_1,x_2)$.  We can parametrize in polar coordinates,
the right half circle as follows,
\[\theta = -\gamma_R(r):= \arcsin \left(f(r)\right) \quad
\mbox{for}\quad R-1 \le r\le R+1\]
with
\[f(r)=\frac{r^2+R^2 -1}{2Rr}\]
which satisfies $f(R-1)=1=f(R+1)$, and $f'(r)=\frac{r^2-(R^2-1)}{2Rr^2}$
with $f'(R\pm 1)\not=0$.

This implies in particular that the graph of $\gamma_R$ has vertical
tangents at $r=R\pm 1$.  Because $\gamma_R$ is a stationary solution
of (\ref{eq:main}) on $(R-1,R+1)$, we can compare it on $(R-1,R+1)$ to
the stationary solution $\Phi$ when $\lambda=0$.  We consider
$$\min_{r\in [R-1,R+1]} \left(\Phi(r)-\gamma_R(r)\right).$$
Since $\lim_{r\to R-1}\gamma_R'(r)=+\infty$ and $\lim_{r\to
  R+1}\gamma_R'(r)=-\infty$ and using the fact that $\phi$ is
Lipschitz continuous, we get that $(\phi-\gamma_R)(R-1)$ and
$(\phi-\gamma_R)(R+1)$ are local maximum. Hence, the minimum can not
be achieved at $r=R\pm 1$  and is therefore reached at
  some interior point. The strong minimum principle then
implies that
$$\Phi(r)-\gamma_R(r) \text{ is constant in }  (R-1,R+1).$$
By continuity, this is still true at $r=R\pm 1$ which is again impossible.
Finally, we conclude that $\lambda\not =0$ and then $\lambda>0$.
This ends the proof of the lemma.
\end{proof}

\subsection{Asymptotics}

In the following proposition, the asymptotics of the profile is stated
in Log coordinates. It also contains the asymptotics of the derivative
of the profile which will be used later. 
\begin{pro}[Asymptotics near $r=+\infty$]\label{pro::e14}
Under the assumptions of Proposition~\ref{pro::e10}, 
 the function $\varphi(x)=\Phi(e^x)$ satisfies 
\begin{equation}\label{eq::e15}
|\varphi(x)+ \lambda e^x +\lambda x-a|\le Ce^{-x} \quad
\mbox{for}\quad x\ge x_1
\end{equation}
and
\begin{equation}\label{eq::e20}
\varphi_x(x)=-\lambda e^x -\lambda +O(e^{-x})
\quad \mbox{for}\quad x\ge x_1 
\end{equation}
for some constants $a,x_1\in\R$ and $C>0$.
\end{pro}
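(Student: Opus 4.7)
The plan is to pass to logarithmic coordinates $x = \log r$ and analyse the resulting first-order ODE. Setting $\varphi(x) := \Phi(e^x)$ and $w(x) := \varphi_x(x) = r\Phi_r(r)|_{r=e^x}$, the stationary version of \eqref{eq::mainx} reads
\[
\frac{w_x}{1+w^2} = \lambda e^{2x} - e^x\sqrt{1+w^2} - w, \qquad -\tfrac12 e^x \le w \le 0,
\]
with the bounds coming from \eqref{eq::e11}. The derivative estimate \eqref{eq::e20}, namely $w(x) = -\lambda e^x - \lambda + O(e^{-x})$, is the main target; then \eqref{eq::e15} follows by integration.

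First I would show $\Phi_r(r) \to -\lambda$. Using the a priori estimate $|\Phi_{rr}| \le C(1+r^2)$ of Lemma~\ref{lem:1}, the term $\tfrac{r\Phi_{rr}}{1+r^2\Phi_r^2}$ in the stationary equation is uniformly bounded, so dividing the stationary equation by $r$ forces $\sqrt{1+r^2\Phi_r^2}/r \to \lambda$, hence $|\Phi_r|\to \lambda$, and the sign $\Phi_r\le 0$ gives $\Phi_r\to -\lambda$. Note that since monotonicity of $\Phi_r$ is proved only later in Section~\ref{s5}, this convergence must be extracted directly from the equation rather than from monotonicity.

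For the sharp rate, set $g(x) := w(x) + \lambda e^x + \lambda$; a Taylor expansion of $\sqrt{1+w^2}$ around the leading profile $w \approx -\lambda e^x - \lambda$ to order $e^{-x}$ yields, after algebra, an ODE of the form
\[
g_x = \lambda^2 e^{3x}\, g - \lambda^2 C^\star\, e^{2x} + \lambda e^x + \text{l.o.t.}, \qquad C^\star := \tfrac{1}{2\lambda} - \lambda,
\]
whose balanced bounded trajectory satisfies $g(x) \sim C^\star e^{-x}$. The one-parameter family of ansätze $w_C := -\lambda e^x - \lambda + Ce^{-x}$ yields explicit barriers: the defect of $w_C$ in the ODE for $w$ is at leading order $\sim \lambda^2(C-C^\star)e^{2x}$, so $w_C$ is a sub-solution for $C > C^\star$ and a super-solution for $C < C^\star$. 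Coupling these barrier identities with the a priori bounds on $w$ and the already-proved convergence $w/e^x \to -\lambda$ then pins $g$ at order $e^{-x}$ and gives \eqref{eq::e20}.

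Finally, integrate: since $\varphi_x + \lambda e^x + \lambda \in L^1([x_1,+\infty))$ by \eqref{eq::e20}, the constant
\[
a := \lim_{x\to+\infty}\bigl(\varphi(x) + \lambda e^x + \lambda x\bigr)
\]
exists, and
\[
\bigl|\varphi(x) + \lambda e^x + \lambda x - a\bigr| = \Bigl|\int_x^{+\infty}\bigl(\varphi_y + \lambda e^y + \lambda\bigr)\,dy\Bigr| \le C e^{-x},
\]
establishing \eqref{eq::e15}.

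The delicate step is the sharp rate \eqref{eq::e20}. The linearised ODE for $g$ is forward-unstable (the effective coefficient of $g$ is $\sim +\lambda^2 e^{3x}$ at the profile), so a naive sub-/super-solution sandwich does not close: in the family $w_C$, sub-solutions (the case $C > C^\star$) sit \emph{above} the profile and super-solutions sit \emph{below}, the opposite of what a standard comparison argument needs. The proof must therefore combine the barrier identities with the a priori bounds on $w$ and track carefully how deviations from the profile propagate in the unstable direction, ruling out all but the exceptional bounded trajectory. Matching the constant $C^\star = \tfrac{1}{2\lambda} - \lambda$ correctly requires expanding $\sqrt{1+w^2}$ to third order, where the $\tfrac{1}{2\lambda}$ term arises; this algebraic bookkeeping is the most computationally intensive part of the argument.
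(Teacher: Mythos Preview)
Your strategy is essentially the paper's: reduce to the first-order ODE for $v=\varphi_x$, trap $v$ between barriers lying $O(e^{-x})$ apart, exploit the forward instability together with the a~priori bounds $-Le^x\le v\le 0$ to rule out every trajectory except the balanced one, and integrate. The final integration step is identical.

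Two differences are worth noting. First, instead of the ansatz family $w_C=-\lambda e^x-\lambda+Ce^{-x}$, the paper takes as its upper barrier the \emph{exact nullcline} $v_0(x)$, i.e.\ the root of $f(\cdot,x)=0$, and as its lower barrier $\bar v=v_0-\mu e^{-3x/2}$. This is cleaner: $v_0$ is automatically a subsolution because $(v_0)_x\le 0=f(v_0,x)$, and the explicit asymptotics $v_0=-\lambda e^x-\lambda+(\tfrac{1}{2\lambda}-\lambda)e^{-x}+O(e^{-2x})$ drop out of a quadratic. More importantly, the paper makes the ``track how deviations propagate'' step that you left unspecified completely concrete: if $v(y_*)>v_0(y_*)$ at some point, then $f(v,x)\ge\alpha>0$ for all $x\ge y_*$ as long as $v\in[v(y_*),0]$, forcing $v$ to become positive and contradicting $\varphi_x\le 0$; if $v(x_*)\le \bar v(x_*)$, then comparison plus $\partial_w f\ge \tfrac{\lambda^2}{2}e^{3x}$ gives $v_x\le -\tfrac{\mu\lambda^2}{2}e^{3x/2}$, contradicting boundedness of $\Phi_r=e^{-x}v$. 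This two-sided contradiction is the heart of the argument, and your sketch stops just short of it.

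Second, your preliminary step ``$\Phi_r\to-\lambda$'' has a gap and is in any case unnecessary. You claim that the bound $|\Phi_{rr}|\le C(1+r^2)$ from Lemma~\ref{lem:1} makes $r\Phi_{rr}/(1+r^2\Phi_r^2)$ uniformly bounded, but at this stage one only knows $\Phi_r\le 0$ (and $\Phi$ Lipschitz), not $|\Phi_r|\ge\delta>0$; along a hypothetical sequence $r_n\to\infty$ with $\Phi_r(r_n)\to 0$ the denominator gives no help and the quotient is not controlled. The paper simply skips this step: the barrier argument with $v_0$ and $\bar v$ yields $v_0-\mu e^{-3x/2}\le v\le v_0$ directly, and $\Phi_r\to-\lambda$ falls out as a corollary rather than being needed as input.
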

Recalling \eqref{eq::mainx}, we see that $\varphi$ is a solution of
the following second order ODE
\begin{equation}\label{eq:ode-profile}
\lambda =  e^{-x} \sqrt{1 + \varphi_x^2} + e^{-2x}\varphi_x
+e^{-2x} \frac{\varphi_{xx}}{1 + \varphi_x^2} \quad \mbox{for}\quad x\in  \R .
\end{equation}

As we shall see it, Proposition~\ref{pro::e14} is a consequence of the
study of the ODE satisfied by $v:=\varphi_x\le 0$, which is the
following
\begin{equation}\label{eq::20}
v_x= f(v,x) \quad \text{for} \quad x \in \R 
\end{equation}
where
\[f(w,x)=e^{2x}(1+w^2) \zeta(w,x) 
\quad \mbox{with}\quad \zeta(w,x)=\lambda - e^{-x} \sqrt{1 + w^2} -
e^{-2x}w .\]

We first need the following result.
\begin{lem}[Elementary estimates]\label{lem::21}
Let $\lambda>0$. Then there exists a real number $x_0\ge 0$ such that
for $x\ge x_0$, the equation $f(w,x)=0$ has a single root $w=v_0(x)$
which is non-positive. This function satisfies for $x\ge x_0$
\begin{eqnarray}\label{eq:v0}
  v_0(x)&=&-\lambda e^x -\lambda 
+e^{-x}\left(\frac{1}{2\lambda}-\lambda\right)+O(e^{-2x}),\\
\label{eq:v0x}
(v_0)_x(x)&=&-\lambda e^x + O(1) \le 0.
\end{eqnarray}
Moreover we have
\begin{equation}\label{eq:dfdw}
\frac{\partial f}{\partial w}(w,x)\ge \frac{\lambda^2}{2} e^{3x} \quad
\mbox{for}\quad w\le v_0(x) \quad \mbox{and}\quad x\ge x_0
\end{equation}
and for all $w_*,y_* \in \R$, we have
\begin{equation}\label{eq::28}
\left.\begin{array}{l}
x\ge y_*\ge x_0\\
v_0(y_*)<w_*\le w\le 0
\end{array}\right\} \Longrightarrow  f(w,x)\ge e^{2y_*}\min(\zeta(w_*,y_*),\lambda/2)>0 .
\end{equation}
\end{lem}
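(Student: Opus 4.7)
The plan is to study the algebraic equation $\zeta(w,x)=0$ directly, using the strict concavity of $\zeta(\cdot,x)$ as the main analytical engine, and then translate these facts into the advertised properties of $f=e^{2x}(1+w^2)\zeta$. The key observation is that
\[\partial_w^2\zeta(w,x)=-\frac{e^{-x}}{(1+w^2)^{3/2}}<0,\]
so $\zeta(\cdot,x)$ is strictly concave, with unique critical point $w^*(x)=-e^{-x}/\sqrt{1-e^{-2x}}<0$ (for $x>\ln 2$) at which $\zeta$ attains its maximum value $\lambda-e^{-x}\sqrt{1-e^{-2x}}$. Since $\zeta(0,x)=\lambda-e^{-x}>0$ for $x\ge x_0$ large and $\zeta(w,x)\to-\infty$ as $w\to-\infty$, concavity yields exactly one root $v_0(x)$ in $(-\infty,w^*(x))$, which is $\le 0$; the other root lies in $(w^*(x),+\infty)$ and is positive, hence irrelevant to the statement.

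Next I would derive the expansion \eqref{eq:v0} by substituting the ansatz $v_0=-\lambda e^x+A+Be^{-x}+O(e^{-2x})$ into $\zeta(v_0,x)=0$. Using $\sqrt{1+w^2}=|w|+1/(2|w|)+O(|w|^{-3})$ for $|w|$ large, one expands $e^{-x}\sqrt{1+v_0^2}+e^{-2x}v_0$ up to order $e^{-2x}$ and matches powers: the constant term forces $A=-\lambda$ and the $e^{-x}$-coefficient forces $B=\tfrac{1}{2\lambda}-\lambda$. Smooth dependence and the remainder estimate $O(e^{-2x})$ follow from the implicit function theorem, whose hypothesis $\zeta_w(v_0,x)\ne 0$ is guaranteed by strict concavity together with $v_0<w^*$. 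Differentiating $\zeta(v_0(x),x)\equiv 0$ gives $(v_0)_x=-\zeta_x/\zeta_w$; substituting $\zeta_x(v_0,x)=\lambda+O(e^{-x})$ and $\zeta_w(v_0,x)=e^{-x}+O(e^{-2x})$ yields \eqref{eq:v0x}.

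For \eqref{eq:dfdw}, I would expand
\[\partial_w f(w,x)=2e^{2x}w\,\zeta(w,x)+e^{2x}(1+w^2)\,\zeta_w(w,x).\]
For $w\le v_0(x)$, strict concavity shows $\zeta(w,x)\le 0$ (the left branch of the concave graph, beyond the root $v_0$) and $\zeta_w(w,x)>0$ (since $w<w^*$ sits on the increasing branch); combined with $w<0$, the first summand is $\ge 0$. From $|w|\ge|v_0(x)|\sim\lambda e^x$ one gets $w^2\ge\lambda^2 e^{2x}(1-o(1))$ and $|w|/\sqrt{1+w^2}\ge 1-O(e^{-2x})$, so $\zeta_w(w,x)\ge e^{-x}(1-o(1))$, and multiplying gives $\partial_w f\ge\tfrac{\lambda^2}{2}e^{3x}$ after enlarging $x_0$ if necessary.

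Finally, \eqref{eq::28} follows from two monotonicity facts. First, for $w\le 0$ and $x\ge\ln 2$,
\[\partial_x\zeta(w,x)=e^{-x}\sqrt{1+w^2}+2e^{-2x}w\ge 0,\]
since the inequality reduces to $1\ge w^2(4e^{-2x}-1)$, which is automatic once $4e^{-2x}\le 1$; hence $\zeta(w,x)\ge\zeta(w,y_*)$ for $x\ge y_*\ge x_0$. Second, on $[w_*,0]\subset(v_0(y_*),0]$ the map $\zeta(\cdot,y_*)$ is concave, so its graph lies above the chord joining the endpoints, giving $\zeta(w,y_*)\ge\min(\zeta(w_*,y_*),\zeta(0,y_*))\ge\min(\zeta(w_*,y_*),\lambda/2)$ once $x_0$ is large enough that $\lambda-e^{-x_0}\ge\lambda/2$. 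Multiplying by $e^{2x}(1+w^2)\ge e^{2y_*}$ yields the claim. The only real difficulty is choosing a single threshold $x_0$ that works simultaneously for all these asymptotic estimates and keeping track of the explicit coefficient $\tfrac{1}{2\lambda}-\lambda$ through the Taylor bookkeeping — routine but error-prone.
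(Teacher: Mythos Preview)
Your proposal is correct and covers all four parts of the lemma; the organizing idea is genuinely different from the paper's, and in some respects cleaner.

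The paper proceeds computationally: it squares the equation $\zeta(w,x)=0$ to obtain the quadratic $(1-e^{-2x})w^2+2\lambda w+1-\lambda^2 e^{2x}=0$, takes the explicit root by the quadratic formula, and expands that formula directly to get \eqref{eq:v0}; it then differentiates the quadratic relation to obtain \eqref{eq:v0x}. For \eqref{eq:dfdw} and \eqref{eq::28} the paper analyses the sign changes of $\partial_w\zeta$ via the auxiliary point $w_0(x)=-1/\sqrt{e^{2x}-1}$ (which is exactly your $w^*$) and argues case by case that $\zeta$ is unimodal on $(-\infty,0]$. You replace all of this by the single observation that $\partial_w^2\zeta<0$, from which the uniqueness of the nonpositive root, the sign of $\zeta$ on $(-\infty,v_0]$, the positivity of $\zeta_w$ there, and the endpoint-minimum bound on $[w_*,0]$ all fall out at once. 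What the paper's explicit quadratic buys is a concrete closed form for $v_0$ that makes the $O(e^{-2x})$ remainder transparent without invoking the implicit function theorem; what your concavity argument buys is a unified narrative that avoids the squaring step (and its attendant need to discard the spurious root) and makes the monotonicity structure behind \eqref{eq::28} visible immediately. Both routes require the same enlargement of $x_0$ to absorb lower-order terms, and your final paragraph reproduces the paper's $\partial_x\zeta\ge 0$ step verbatim.
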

\begin{proof}[Proof of Lemma \ref{lem::21}]
The proof proceeds in several steps.

\paragraph{Step 1: Definition of $v_0$.}
Remark that if $f(w,x)=0$, then $w$ solves the following second order
polynomial equation
\begin{equation}\label{eq::23}
(1-e^{-2x}) w^2 + 2\lambda w + 1-\lambda^2e^{2x}=0 \, .
\end{equation}
For some $x$ large enough, there is only one non-positive
solution which is given by the following formula
\begin{align*}
v_0(x)=&\frac{-\lambda 
  -\sqrt{\lambda^2+(1-e^{-2x})(e^{2x} \lambda^2-1)}}{1-e^{-2x}}\\
  =&\frac{-\lambda-\lambda e^x\sqrt{1+\left(\frac 1 {\lambda^2}e^{-4x} -\frac 1{\lambda^2} e^{-2x}\right)}}{1-e^{-2x}}\\
  =& \left(-\lambda-\lambda e^x\left(1-\frac 1{2\lambda^2}e^{-2x}\right)\right)\left(1+e^{-2x}\right)+O(e^{-2x}),
  \end{align*}
  which gives \eqref{eq:v0}.\medskip

In order to recover (\ref{eq:v0x}), we take the $x$-derivative of
equation~(\ref{eq::23}) satisfied by $v_0$, and we get
\[(v_0)_x(v_0(1-e^{-2x}) +\lambda) +(v_0)^2 e^{-2x}  -\lambda^2 e^{2x}=0.\]
This implies (using \eqref{eq:v0} in the second equality)
$$
(v_0)_x(x)=\frac{-(v_0)^2 e^{-2x} +\lambda^2 e^{2x}}{v_0(1-e^{-2x}) +\lambda}
=\frac {\lambda^2e^{2x}+O(1)}{-\lambda e^x+O(1)},
$$
which gives \eqref{eq:v0x}.

\paragraph{Step 2: Estimate on $\frac{\partial f}{\partial w}$.}
Let us now compute
\[\frac{\partial f}{\partial w}(w,x)=2we^{2x} \zeta(w,x) 
+ e^{2x}(1+w^2)\frac{\partial \zeta}{\partial w}(w,x)\]
and
\begin{equation}\label{eq::27}
\frac{\partial \zeta}{\partial w}(w,x)=-\frac{we^{-x}}{\sqrt{1+w^2}}-e^{-2x}=:g(w,x) .
\end{equation}
Remark also that,  increasing $x_0$ if necessary, we have for $x\ge x_0$ both
\[v_0(x)\le -1\] 
and
\[\frac{\partial \zeta}{\partial w}(w,x) \ge \frac12 e^{-x} \quad
\mbox{for}\quad w(x)\le v_0(x)\le -1 .\]
But $\zeta(v_0(x),x)=0$, and then the sign of $\frac{\partial
  \zeta}{\partial w}$ implies
\[\zeta(w(x),x)\le 0 \quad \mbox{for}\quad w(x)\le v_0(x)\]
and 
\[\frac{\partial f}{\partial w}(w,x)\ge e^{2x}(1+w^2)\frac{\partial
  \zeta}{\partial w}(w,x).\]
Again, increasing $x_0$ if necessary,  we can assume that $v_0(x)\le -\lambda
e^{x}$ for $x\ge x_0$ and then
\[\frac{\partial f}{\partial w}(w,x)\ge \frac{\lambda^2}{2} e^{3x} \quad
\mbox{for}\quad w(x)\le v_0(x).\]

\paragraph{Step 3: Estimate on $f$.}
Recall that the function $g$ appears in \eqref{eq::27}.  Remark that
for $x\ge 0$ we have $g(w,x)=0$ with $w\le 0$ if and only if
\[w(x)=-\frac{1}{\sqrt{e^{2x}-1}}=:w_0(x).\]
Moreover we can then deduce that
\begin{eqnarray*}
g(w,x)&\ge& 0 \quad \mbox{if}\quad w\le w_0(x), \\
g(w,x)&\le& 0 \quad \mbox{if}\quad w_0(x)\le w\le 0.
\end{eqnarray*}
Because of \eqref{eq::27}, we deduce that, increasing $x_0$ if necessary,
\[w_0(x)\le w\le 0 \Longrightarrow \zeta(w,x) \ge \zeta(0,x)=\lambda -
e^{-x}\ge \lambda/2 \quad \mbox{if}\quad x\ge x_0\]
and then using the definition of $f$ and a bound from below of
$\zeta(w,x)$ for $w\in \left[w_*,0\right]$, we get
\[v_0(x)<w_*\le w\le 0 \Longrightarrow f(w,x) \ge
e^{2x}\min(\zeta(w_*,x),\lambda/2)>0 \quad \mbox{if}\quad x\ge x_0\]
Let us notice that for $w\le 0$, we have up to increase $x_0$ if necessary, 
\[\frac{\partial \zeta}{\partial x}(w,x)=e^{-x} \sqrt{1 + w^2}
+2e^{-2x}w \ge 0 \quad \mbox{if}\quad x\ge x_0\]
and then this implies (\ref{eq::28}). This ends the proof of the lemma.
\end{proof}
We next prove the following estimate. 
\begin{lem}[Asymptotics for $v=\varphi_x$]\label{lem:asympt-v}
For any $\mu>0$, there exists a real number $x_1\ge x_0$ such that
$v=\varphi_x$ satisfies
\[v_0(x)\ge v(x)\ge v_0(x) -\mu e^{-\frac32 x}\quad
\mbox{for}\quad x\ge x_1\]
where $v_0$ and $x_0$ are given by Lemma~\ref{lem::21}.
\end{lem}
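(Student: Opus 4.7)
The plan is to prove the two inequalities separately via contradiction arguments driven by the ODE $v_x = f(v,x)$ and the estimates collected in Lemma~\ref{lem::21}.

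For the upper bound $v \leq v_0$, I argue by contradiction: assume $v(x_*) > v_0(x_*)$ at some $x_* \geq x_0$. Since $v = \varphi_x = e^x \Phi_r(e^x) \leq 0$, the estimate \eqref{eq::28} applied with $y_* = x_*$ and $w_* = v(x_*)$ gives $v_x = f(v,x) \geq c(x_*) > 0$ for all $x \geq x_*$ with $v(x) \in [v(x_*), 0]$. Since $v_x > 0$ prevents $v$ from falling back below $v(x_*)$, this positive lower bound on $v_x$ persists as long as $v \leq 0$, and integration forces $v(x) \to +\infty$, contradicting $v \leq 0$.

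For the lower bound, set $A(x) = v_0(x) - v(x)$, which is non-negative by the upper bound just established. Using \eqref{eq:dfdw}, the identity $f(v_0,x) = 0$, and the mean value theorem applied to $f(\cdot,x)$ on $[v, v_0]$, one obtains $f(v,x) \leq -\tfrac{\lambda^2}{2} e^{3x} A$. Combined with \eqref{eq:v0x}, this yields
\[
A_x = (v_0)_x - f(v,x) \geq -\lambda e^x - C + \tfrac{\lambda^2}{2} e^{3x} A \quad \text{for } x \geq x_0.
\]
Given $\mu > 0$, set $\bar A(x) = \mu e^{-3x/2}$ and choose $x_1 \geq x_0$ so that $\tfrac{\lambda^2 \mu}{4} e^{3x/2} \geq \lambda e^x + C$ for all $x \geq x_1$. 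Suppose toward a contradiction that $A(x_*) > \bar A(x_*)$ at some $x_* \geq x_1$. At every point $x \geq x_1$ where $A(x) = \bar A(x)$, the previous inequality gives $A_x \geq \tfrac{\lambda^2 \mu}{4} e^{3x/2} > 0 > \bar A_x$, so $A$ cannot cross $\bar A$ from above to below; hence $A(x) > \bar A(x)$ for all $x \geq x_*$. On this half-line the differential inequality reduces to $A_x \geq \tfrac{\lambda^2 \mu}{4} e^{3x/2}$, and integration yields $A(x) \geq c\, e^{3x/2}$ for large $x$. But the global Lipschitz bound on $\Phi$ together with the asymptotics $v_0 \sim -\lambda e^x$ gives $A(x) \leq C e^x$. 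Since $e^{3x/2}$ dominates $e^x$ as $x \to +\infty$, this is the desired contradiction.

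The main subtlety is the choice of the barrier's decay exponent. The value $3/2$ lies in a narrow window $(1,2)$: it must be large enough (less than $2$) that on the barrier the linearised term $\tfrac{\lambda^2}{2} e^{3x} A$ dominates $(v_0)_x \sim -\lambda e^x$, giving the barrier property; yet small enough (greater than $1$) that the resulting lower bound $A \geq c\, e^{3x/2}$ still beats the a priori upper bound $A \leq Ce^x$. Any exponent in $(1,2)$ would yield the same contradiction, and $3/2$ is a convenient choice that will be bootstrapped into the sharper asymptotics \eqref{eq::e20}.
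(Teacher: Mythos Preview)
Your proof is correct and follows essentially the same approach as the paper: the upper bound $v\le v_0$ via \eqref{eq::28} is identical, and for the lower bound both arguments hinge on the barrier $\mu e^{-3x/2}$, the linearisation estimate \eqref{eq:dfdw}, and the contradiction with the Lipschitz bound on $\Phi$. The only cosmetic difference is that the paper works directly with the supersolution $\bar v=v_0-\mu e^{-3x/2}$ and invokes the ODE comparison principle, whereas you recast the same computation as a differential inequality for $A=v_0-v$ and a crossing argument; one minor phrasing point is that in the upper-bound step the contradiction is that $v$ becomes strictly positive at some finite $x$, not that $v\to+\infty$.
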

\begin{proof}[Proof of Lemma \ref{lem:asympt-v}]
Recall that $\lambda >0$ and define
\[\bar{v}(x):= v_0(x) -\mu e^{-\frac32 x}.\] 
The proof proceeds in several steps.

\paragraph{Step 1: $\mathbf{\bar{v}}$ is a super-solution.} 
Remark that, thanks to \eqref{eq:v0x},
\[\bar{v}_x(x)=(v_0)_x(x)+\frac32\mu e^{-\frac32 x} = -\lambda e^{x} + O(1).\]
We also remark that there exists $w(x)\in [\bar{v}(x),v_0(x)]$ such that
\begin{equation}\label{eq::25}
\begin{array}{ll}
f(\bar{v}(x),x)& \displaystyle{=f(v_0(x),x)
+ \frac{\partial f}{\partial w}(w(x),x)(\bar{v}(x)-v_0(x))}\medskip\\
& \le \displaystyle{\frac{\lambda^2}{2} e^{3x}(\bar{v}(x)-v_0(x))} \medskip\\
& \le \displaystyle{-\mu \frac{\lambda^2}{2} e^{\frac32 x}} 
\end{array}
\end{equation}
where we used \eqref{eq:dfdw} in the second line. 
Therefore there exists $x_1\ge x_0$ such that
\[\bar{v}_x(x) \ge f(\bar{v}(x),x) \quad \mbox{for}\quad x\ge x_1.\]

\paragraph{Step 2: Comparison with $\mathbf{\bar v}$.}
Assume by contradiction that $v(x_*)\le \bar{v}(x_*)$ for some $x_*\ge
x_1$.  Then, from the comparison principle, we deduce that
\[v(x)\le \bar{v}(x)\quad \mbox{ for all }\quad x\ge x_* .\]
Then we have
\begin{equation}\label{eq::26}
v_x(x)=f(v(x),x) \le f(\bar v(x),x)\le \displaystyle{-\mu \frac{\lambda^2}{2} e^{\frac32 x}}
\end{equation}
where we have used the fact that $v\le \bar v$, the monotonicity of
$f(w,x)$ in $w$ (see \eqref{eq:dfdw}) and estimate~(\ref{eq::25}).
Estimate~(\ref{eq::26}) now gives a contradiction with the fact that
$\Phi_r(e^x)=e^{-x}v(x)$ is bounded.

\paragraph{Step 3: $\mathbf{v_0}$ is a sub-solution.}
The inequality $(v_0)_x(x)\le 0 =f(v_0(x),x)$ for $x\ge x_0$ follows
from \eqref{eq:v0x}.

\paragraph{Step 4: Comparison with $\mathbf{v_0}$.}
We argue by contradiction.  Let us assume that there exists a point
$y_*\ge x_0$ such that $v(y_*)>v_0(y_*)$.  Then from (\ref{eq::28}),
we deduce that there exists a constant $\alpha>0$ such that
\[f(w,x)\ge \alpha>0 \quad \mbox{for}\quad w \in [v(y_*), 0] 
\quad \text{and} \quad x\ge y_* .\]
But recall that 
\[v_x(x)=f(v(x),x).\]
This implies that 
\[v_x(x)\ge \alpha \quad \mbox{for}\quad x\ge y_*\quad \mbox{while}\quad v(x)\le 0.\]
Therefore we conclude (using the continuity of $f$) that there exists
a point $x_2$ such that $v(x_2)>0$, which is impossible because
$v=\varphi_x\le 0$. We thus get the desired contradiction.  This ends
the proof of the lemma.
\end{proof}

\begin{proof}[Proof of Proposition~\ref{pro::e14}]
It follows from Lemma~\ref{lem:asympt-v} and (\ref{eq:v0}). 
\end{proof}

\subsection{Uniqueness}
\label{s4.2}

\begin{pro}[Uniqueness]\label{pro::31}
Under the assumptions of Proposition~\ref{pro::e10}, $\lambda$ is unique
and $\Phi$ is unique up to addition of constants.
\end{pro}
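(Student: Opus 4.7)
My plan has two steps. \textbf{Step 1} proves $\lambda_1=\lambda_2$ by a sliding/comparison argument built on the asymptotic expansion from Proposition~\ref{pro::e14}; \textbf{Step 2} then shows that the difference $W:=\Phi_1-\Phi_2$ is constant via a linearized ODE analysis whose crucial ingredient is the degeneracy of the equation at $r=0$ combined with the boundary regularity $(\Phi_i)_r(0)=-\tfrac12$ coming from Lemma~\ref{lem:1}.

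For Step 1, let $\lambda_it+\Phi_i(r)$, $i=1,2$, be two profiles satisfying the hypotheses of Proposition~\ref{pro::e10}, and assume without loss of generality that $\lambda_1\le\lambda_2$. By Proposition~\ref{pro::e14} applied to each profile, $\Phi_i(r)+\lambda_ir+\lambda_i\ln(1+r)$ stays bounded on $[0,+\infty)$, so that $M:=\sup_r(\Phi_2-\Phi_1)$ is finite (when $\lambda_1<\lambda_2$, $\Phi_2-\Phi_1$ even tends to $-\infty$ linearly). Since \eqref{eq:main} is invariant under constant shifts of $U$, $U_1+M$ is a solution, and $U_1(0,\cdot)+M\ge U_2(0,\cdot)$. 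The comparison principle of \cite{spirale1}, applicable because all data are globally Lipschitz, propagates this inequality for every $t\ge 0$, giving
\[
(\lambda_2-\lambda_1)\,t\le \Phi_1(r)-\Phi_2(r)+M \qquad \forall\, t\ge 0.
\]
Letting $t\to+\infty$ at fixed $r$ forces $\lambda_1=\lambda_2$.

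For Step 2, set $\lambda=\lambda_1=\lambda_2$. Proposition~\ref{pro::e14} gives $W\to a_1-a_2$ as $r\to+\infty$, so $W$ is continuous and bounded on $[0,+\infty)$. Writing the stationary version of \eqref{eq:main} as $F(r,\Phi_r,\Phi_{rr})=\lambda r$ (crucially, $F$ depends on $\Phi$ only through its derivatives), subtracting the equations for $\Phi_1,\Phi_2$ and applying the mean value theorem in the $(p,q)$ variables produces a linear second-order ODE with no zero-order term,
\[
\alpha(r)\,W_r+\beta(r)\,W_{rr}=0, \qquad r>0,
\]
where $\beta(r)=r\int_0^1(1+r^2p_s^2)^{-1}\,ds$ is strictly positive for $r>0$ and $\alpha(0)=\partial_pF(0,\cdot,\cdot)=2$ (from $F(0,p,q)=1+2p$). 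Integration yields
\[
W_r(r)=W_r(r_1)\,\exp\!\left(\int_r^{r_1}\frac{\alpha(s)}{\beta(s)}\,ds\right), \qquad 0<r\le r_1,
\]
and since $\alpha/\beta\sim 2/r$ near $r=0$, the exponential blows up like $(r_1/r)^2$ as $r\to 0^+$. However, Lemma~\ref{lem:1} applied to each $U_i=\lambda t+\Phi_i$ gives $(\Phi_i)_r(r)=-\tfrac12+O(r)$, so that $W_r(r)=O(r)\to 0$ as $r\to 0^+$. The only way to reconcile both behaviors is $W_r(r_1)=0$, and since $r_1>0$ is arbitrary this means $W_r\equiv 0$, i.e.\ $W$ is constant.

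The expected obstacle is precisely the passage at $r=0$: the second-order coefficient $\beta$ of the linearized ODE degenerates there, which rules out a direct application of Cauchy--Lipschitz or of a standard strong maximum principle up to the boundary. Matching the boundary value $W_r(0)=0$ (coming from the forced slope $-\tfrac12$ at the origin) with the positivity $\alpha(0)=2$ is what converts this degeneracy into the rigidity statement $W_r\equiv 0$.
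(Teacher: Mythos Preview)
Your Step~1 is essentially the paper's own argument: both use the asymptotics of Proposition~\ref{pro::e14} to bound $\Phi_1-\Phi_2$ from one side, then apply the comparison principle from \cite{spirale1} and let $t\to+\infty$ to rule out $\lambda_1\neq\lambda_2$.

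Your Step~2, however, is genuinely different from the paper's. The paper defers to the Space Liouville Theorem~\ref{theo::pp1}, whose proof (in Section~\ref{s6}) is a sliding argument resting on two separate comparison principles---one for small $r$ (Proposition~\ref{prop:comp-r-petit}) and one for large $r$ (Proposition~\ref{prop:comp-r-grand})---together with the strong maximum principle. You instead linearize the stationary ODE directly: with $v=W_r$ you get the first-order linear equation $v'=-(\alpha/\beta)v$ on $(0,+\infty)$, and since $\alpha/\beta\sim 2/r$ near the origin, any nontrivial solution must blow up like $r^{-2}$, contradicting $W_r=O(r)$ from Lemma~\ref{lem:1}. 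This is correct and considerably more elementary for the specific statement at hand; it uses only the $C^\infty(0,+\infty)$ regularity of the $\Phi_i$ (to write the linearized ODE pointwise) and the a~priori bound of Lemma~\ref{lem:1} (to force $W_r\to 0$). The paper's route is heavier here but not wasted: the two comparison principles of Section~\ref{s6} are the backbone of the full space-time Liouville Theorem~\ref{theo:liouville}, which your ODE trick cannot reach since it relies on the equation being an autonomous ODE rather than a genuine PDE.

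One small presentational point: your integration formula for $W_r$ tacitly assumes $W_r$ does not vanish on $(r,r_1)$. It is worth stating explicitly that $v=W_r$ solves a \emph{linear} first-order ODE on $(0,+\infty)$, so by uniqueness either $v\equiv 0$ (done) or $v$ never vanishes and your exponential formula is valid---at which point the blow-up versus $O(r)$ contradiction kicks in.
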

In order to prove Proposition \ref{pro::31}, we will need the
following space Liouville result which will be proven later in
Section~\ref{s6} as an independent result.
\begin{theo}[Space Liouville theorem]\label{theo::pp1}
Let $\Phi^i$ for $i=1,2$ be two $C^2(\left[0,+\infty\right))$ functions
  such that for some $\lambda>0$, the functions $\lambda t +\Phi^i(
  r)$ are solutions of (\ref{eq:main}) in $\R\times (0,+\infty)$ for
  $i=1,2$.  Assume also that we have for $i=1,2$ and $r\ge 0$:
\begin{equation}\label{eq:prof}
\left|\Phi^i(r) +\lambda r + \lambda \ln (1+r)\right| \le
\frac{C}{1+r}
\end{equation}
and
\[\left|\Phi^i_r(r) +\lambda \right| \le \frac{C}{1+r}.\]
Then $\Phi^1=\Phi^2$.
\end{theo}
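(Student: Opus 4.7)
My approach is a sliding / strong maximum principle argument applied to the difference $W := \Phi^1 - \Phi^2$. Since $\lambda t + \Phi^i(r)$ both solve \eqref{eq:main}, each $\Phi^i$ satisfies the stationary equation $r\lambda = G(r,\Phi_r,\Phi_{rr})$ obtained by setting $U_t = \lambda$, where
\[
G(r,p,q) := \sqrt{1+r^2p^2} + p\,\frac{2+r^2p^2}{1+r^2p^2} + \frac{r\,q}{1+r^2p^2}.
\]
Writing $\Phi^\tau := \Phi^2 + \tau W$ and integrating $\frac{d}{d\tau} G(r,(\Phi^\tau)_r,(\Phi^\tau)_{rr})$ over $\tau \in [0,1]$, the identity $G(r,(\Phi^1)_r,(\Phi^1)_{rr}) = G(r,(\Phi^2)_r,(\Phi^2)_{rr})$ yields the linear homogeneous equation
\[
A(r)\,W_{rr} + B(r)\,W_r = 0 \qquad \text{on } (0,+\infty),
\]
with $A(r) = \int_0^1 \frac{r}{1+r^2(\Phi^\tau)_r^2}\,d\tau > 0$ for every $r>0$, and $B$ smooth and bounded on compact subsets of $(0,+\infty)$, since $(\Phi^i)_r$ and $(\Phi^i)_{rr}$ are bounded on compacts. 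Thus $W$ solves a non-degenerate linear elliptic ODE on $(0,+\infty)$, to which the classical strong maximum principle applies.

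The two decay hypotheses give respectively $|W(r)| \le 2C/(1+r)$, so $W$ is bounded on $[0,+\infty)$ and $W(r) \to 0$ as $r \to +\infty$, and also control at the origin: evaluating the stationary equation at $r = 0$ (using $\Phi^i \in C^2([0,+\infty))$) forces $1 + 2(\Phi^i)_r(0) = 0$, hence $(\Phi^1)_r(0) = (\Phi^2)_r(0) = -1/2$ and $W_r(0) = 0$.

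Suppose by contradiction that $W \not\equiv 0$ and, up to replacing $W$ by $-W$, that $M := \sup_{r \ge 0} W(r) > 0$. Since $W$ is continuous and decays to $0$ at infinity, $M$ is attained at some $r_0 \in [0,+\infty)$. If $r_0 > 0$, the strong maximum principle for $A W_{rr} + B W_r = 0$ forces $W$ to be constant in a neighborhood of $r_0$, and the connectedness of $(0,+\infty)$ propagates this to $W \equiv M$ on $(0,+\infty)$, contradicting $W(r) \to 0$. If the supremum is attained only at $r_0 = 0$, I would lift to the 3D reformulation $V^i(X) := \Phi^i(|X|) + |X|/2$ used in Section~\ref{s2}: the difference $V^1 - V^2 = W(|X|)$ is radial, attains an interior maximum at $X = 0$, and solves a non-degenerate linear elliptic PDE in a ball around $0$ (obtained by applying the same convex-combination linearization to the regular 3D equation for $V$ derived in the proof of Proposition~\ref{pro::ob30}). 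The classical strong maximum principle then forces $V^1 \equiv V^2 + M$ in a neighborhood of $0$; picking any $r_0' > 0$ in that neighborhood with $W(r_0') = M$ reduces to the previous case, again yielding $W \equiv M$ on $(0,+\infty)$ and the same contradiction.

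The main obstacle is the degeneracy of the ODE at $r = 0$, where the leading coefficient $A(r)$ vanishes; this prevents a direct Hopf-lemma argument on the half-line. The 3D lifting already introduced in Section~\ref{s2} removes the singularity and lets the classical elliptic strong maximum principle close the boundary case.
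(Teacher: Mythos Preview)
Your proof is correct and takes a genuinely different route from the paper's. The paper slides $\Phi^2$ by a constant $\nu$, then invokes Propositions~\ref{prop:comp-r-petit} and~\ref{prop:comp-r-grand} (comparison principles for small and large $r$, proved only later in Section~\ref{s6}) to confine the contact point of $w^{\nu^*}$ to a compact interval $[r_0^-,r_0^+]$, and finishes with the strong maximum principle. You instead exploit the decay hypothesis $|W|\le 2C/(1+r)$ directly: since $W\to 0$ at infinity, $\sup W$ is attained without any sliding, so Proposition~\ref{prop:comp-r-grand} is never needed. The interior case $r_0>0$ is then immediate from the first-order ODE $W_{rr}=-(B/A)W_r$: a zero of $W_r$ forces $W_r\equiv 0$, hence $W$ constant. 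For the boundary case $r_0=0$, where the paper uses Proposition~\ref{prop:comp-r-petit}, you use the 3D desingularization of Section~\ref{s2} to turn the degenerate endpoint into an interior point of a uniformly elliptic problem.

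What each approach buys: the paper's argument is uniform with the proof of the space-time Liouville Theorem~\ref{theo:liouville}, which reuses the same two comparison propositions; yours is logically cleaner for this particular statement, since it depends only on material already established (Section~\ref{s2}) rather than on forward references to Section~\ref{s6}. One point to flesh out: in the 3D step you should check that the linearized coefficients---in particular $\Delta V^2$ and the $p$-gradients $\partial_p A$, $\partial_p B$ evaluated along $\nabla V^\tau$---are bounded and continuous near $X=0$, so that the classical strong maximum principle applies; this follows from the estimates in the proof of Proposition~\ref{pro::ob30} together with $\Phi^i\in C^2([0,+\infty))$ and $(\Phi^i)_r(0)=-1/2$, but it deserves a line.
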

\begin{proof}[Proof of Proposition \ref{pro::31}]
We already know that $\Phi$ satisfies \eqref{eq::e12}.
From Proposition~\ref{pro::ob30}, we deduce that $\Phi\in
C^2([0,+\infty))$.

\paragraph{Uniqueness of $\lambda$.}
We argue by contradiction by assuming that there exist
  $(\Phi^1,\lambda^1)$ and $(\Phi^2,\lambda^2)$ two solutions
  such that
\[\lambda^1< \lambda^2 .\]
Because of \eqref{eq::e12}, we deduce that there exists a
constant $K$ such that
\[\Phi^1(r) \ge \Phi^2(r) -K \quad \mbox{for}\quad r > 0.\]
From the comparison principle for \eqref{eq:main} (see Theorem 1.3 in
\cite{spirale1}, with Lipschitz continuous initial data $U_0=\Phi^1$), we deduce
\[\lambda^1t + \Phi^1(r) \ge \lambda^2 t + \Phi^2(r) 
-K \quad \mbox{for all}\quad (t,r)\in (0,+\infty)\times (0,+\infty)\]
which implies (for large times) that  $\lambda^1\ge \lambda^2$. This is the desired
contradiction.

\paragraph{Uniqueness of $\Phi$ (up to an additive constant).}
We now consider two profiles $\Phi^1$, $\Phi^2$ with the same
$\lambda=\lambda^1=\lambda^2$.  Recall that for $i=1,2$, each function
$\Phi^i$ satisfies (\ref{eq::e12}) for some constant $a^i$.  Adding
different constants to those two functions if necessary, we can asssume that
$a^1=a^2=a=0$, i.e.
\[|\Phi^i(r)+\lambda r +\lambda \ln (1+r)|\le\frac {C} {1+r},
\quad \mbox{for}\quad i=1,2.\]
We then apply Theorem~\ref{theo::pp1} to conclude that
$\Phi^1=\Phi^2$.  The proof is now complete.
\end{proof}

\begin{proof}[Proof of Proposition~\ref{pro::e10}]
It follows from Lemma~\ref{lem::e13} and Propositions~\ref{pro::e14}
and \ref{pro::31}.
\end{proof}

\section{Further properties of the steady state}\label{s5}

\subsection{Monotonicity properties}

\begin{pro}[Monotonicity of the gradient of the profile]\label{pro::*14}
Let $\Phi$ be the profile given in Proposition~\ref{pro::e10}. Then we have
\begin{equation}\label{eq::e21}
\Phi_{rr}\ge 0 \quad \mbox{in}\quad [0,+\infty)
\end{equation}
\begin{equation}\label{eq::e33}
-\frac12\le \Phi_r\le -\lambda
\end{equation}
and
\begin{equation}\label{eq::e32}
\Phi_r(0)=-\frac{1}{2} \quad \mbox{and}\quad \Phi_r(+\infty)=-\lambda<0.
\end{equation}
\end{pro}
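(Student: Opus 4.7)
The plan is to prove the boundary values \eqref{eq::e32} first, then the monotonicity \eqref{eq::e21}, and finally derive the bounds \eqref{eq::e33} from these.

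For the boundary values I first rewrite the stationary equation (obtained by plugging $U(t,r)=\lambda t+\Phi(r)$ into~\eqref{eq:main}), using the explicit curvature formula~\eqref{def:courbure}, in the compact form
\[ r\lambda=(1+\kappa_\Phi)\sqrt{1+r^2\Phi_r^2}. \]
Evaluating at $r=0$ gives $1+\kappa_\Phi(0)=0$, and since \eqref{def:courbure} at $r=0$ reduces to $\kappa_\Phi(0)=2\Phi_r(0)$, this forces $\Phi_r(0)=-1/2$. The limit at infinity follows directly from Proposition~\ref{pro::e14}: from $\varphi_x(x)=e^x\Phi_r(e^x)=-\lambda e^x-\lambda+O(e^{-x})$ one gets $\Phi_r(r)=-\lambda-\lambda/r+O(1/r^2)$, hence $\Phi_r(+\infty)=-\lambda$.

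For the monotonicity, set $W=\Phi_r$ and rewrite the stationary equation as the first-order ODE $rW'=F(r,W)$ with
\[ F(r,W):=r\lambda(1+r^2W^2)-(1+r^2W^2)^{3/2}-W(2+r^2W^2). \]
A Taylor expansion at $r=0$ using $W(0)=-1/2$ yields $W'(0)=\lambda/3>0$; the asymptotics of Proposition~\ref{pro::e14} give $W'(r)=\lambda/r^2+O(1/r^3)>0$ for $r$ large. To propagate $W'\ge 0$ across the whole half line, the plan is to argue by contradiction: if $W$ is not monotone, let $r_*>0$ be the first point where $W'(r_*)=0$ and $W'<0$ just past $r_*$. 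Then $F(r_*,W(r_*))=0$, and differentiating the ODE along the trajectory gives $r_*W''(r_*)=F_r(r_*,W(r_*))\le 0$. Using the constraint $F=0$ to write $W(r_*)=P^2(r_*\lambda-P)/(1+P^2)$ with $P=\sqrt{1+r_*^2W(r_*)^2}$, an explicit computation yields
\[ r_*F_r(r_*,W(r_*))=P\left[1-\mu\,\frac{P^4+3P^2-2}{P^2+1}\right],\qquad \mu:=-\kappa_\Phi(r_*)\in[0,1]. \]
The plan is then to use global information about $W$ --- chiefly that $\kappa_\Phi\in[-1,0]$ and the endpoint asymptotics --- to show that $\mu$ at $r_*$ is small enough to force $F_r>0$, giving the contradiction. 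Once monotonicity is established, the bounds \eqref{eq::e33} follow at once from the boundary values: $-1/2=\Phi_r(0)\le\Phi_r(r)\le\Phi_r(+\infty)=-\lambda$.

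The main obstacle is precisely the interior monotonicity step. The algebraic expression above shows that $F_r\le 0$ is possible a priori (for instance when $P$ is large and $\mu$ is of order one), so the contradiction cannot be obtained from purely local data at $(r_*,W(r_*))$. A more global argument seems unavoidable: either invoking uniqueness of the steady state (Proposition~\ref{pro::31}) to rule out candidate trajectories exhibiting an interior critical point, or linearising the parabolic equation~\eqref{eq:main} around $\lambda t+\Phi$ and applying the parabolic strong maximum principle to an appropriate derivative of the solution, or transferring monotonicity from the annular profiles of Lemma~\ref{lem:pertoconst} through careful estimates as $R\to+\infty$.
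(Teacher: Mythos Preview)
Your boundary-value computations are fine and match the paper. The genuine gap is exactly where you locate it: the interior monotonicity step. You try to extract a contradiction from a \emph{single} critical point $r_*$ via the sign of $F_r(r_*,W(r_*))$, and you correctly observe that this fails in general. The paper avoids this by comparing \emph{two} critical points.

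Concretely, the paper works in log coordinates with $z(x)=\Phi_r(e^x)$ and writes down the second-order ODE it satisfies (equation~\eqref{eq::e16} with $z_t=0$). At any interior local minimum of $z$ one gets, after setting $\gamma=(1+e^{2x}z^2)^{-1/2}\in(0,1]$, the inequality $z\ge -K(\gamma)$ with $K(\gamma)=\gamma/(1+3\gamma^2-2\gamma^4)$; at any local maximum the reverse inequality $z\le -K(\gamma)$ holds. The crucial observation is that $K$ is \emph{strictly increasing} on $(0,1]$. Now suppose $z$ has a local maximum at $\overline{x}$ with value $\overline{z}$. The asymptotics you already have ($z(x)\to -\lambda$ from below as $x\to+\infty$) force a local minimum at some $\underline{x}>\overline{x}$ with $\underline{z}\le\overline{z}\le 0$. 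Since $\underline{x}>\overline{x}$ and $|\underline{z}|\ge|\overline{z}|$, one has $\underline{\gamma}<\overline{\gamma}$, and the monotonicity of $K$ gives
\[
\overline{z}\le -K(\overline{\gamma}) < -K(\underline{\gamma}) \le \underline{z},
\]
contradicting $\underline{z}\le\overline{z}$. Thus $z$ has no local maximum; combined with $\Phi_{rr}(0)=\lambda/3>0$ (your Taylor expansion), this gives $\Phi_{rr}\ge 0$ everywhere.

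Two remarks on your write-up: your contradiction hypothesis ``$W'(r_*)=0$ and $W'<0$ just past $r_*$'' cannot describe the \emph{first} critical point, since $W'(0)>0$; the first critical point would be a local maximum of $W$, not a point where $W'$ changes from positive to negative. Also, your formula for $r_*F_r$ uses $\mu=-\kappa_\Phi(r_*)\in[0,1]$, but the sign of $\kappa_\Phi$ is proved only later (Proposition~\ref{pro::e1000}), so this is a forward reference you should avoid.
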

\begin{proof}[Proof of Proposition \ref{pro::*14}]
For $\varphi(x)=\Phi(e^x)$, we recall from (\ref{eq::e16}) that
\[z(x):= e^{-x} \varphi_x (x)=\Phi_r(e^x)\] 
satisfies with $w=\varphi_x$:
\[0=z_t = \displaystyle{-  e^{-2x} \left(\frac1{\sqrt{1+w^2}}+z \right)
- e^{-3x} \left(\frac{w}{1 + w^2}+\frac{2w^3}{(1+w^2)^2}\right)
+ e^{-2x}\frac{z_{xx}}{1+w^2} + O(z_x)} \, .\]

\paragraph{Step 1: case of a local minimum of $z$.}
Assume that $z$ has a local minimum at $x_0$ with value $z_0=z(x_0)$. Then $z_{xx}(x_0)\ge 0$
and $z_x(x_0)=0$ which implies,
\[\displaystyle \frac{1}{\sqrt{1+e^{2x_0}z_0^2}}+z_0 +\frac{z_0}{1 +
  e^{2x_0}z_0^2}+\frac{2e^{2x_0}z_0^3}{(1+e^{2x_0}z_0^2)^2} \ge 0.\] 
Setting
\[\gamma=\frac1{\sqrt{1+e^{2x_0}z_0^2}}\in (0,1],\]
we see that this means
\[\displaystyle{\gamma+z_0 
+\gamma^2 z_0 +2 \gamma^4 z_0 (1/\gamma^2 -1) 
}\ge 0\]
i.e.
\begin{equation}\label{eq::e17}
\gamma+z_0 (1+3\gamma^2 -2\gamma^4) \ge 0.
\end{equation}
Let
$$g(y):=1+3y -2y^2$$
Remark that $g$ is maximum at $y=3/4$ and then
\[\inf_{y\in (0,1]} g(y) \ge \min\left(g(0),g(1)\right)=1.\]
Therefore (\ref{eq::e17}) means
\[z_0\ge -\frac{\gamma}{g(\gamma^2)}=: -K(\gamma).\]

\paragraph{Step 2: Monotonicity of $K$.}
Let us compute with $y=\gamma^2$:
\[K'(\gamma)=\frac{1}{g^2(y)}{(g(y)-2yg'(y))}\]
with
\[g(y)-2yg'(y)= 1+3y -2y^2 -2y(3-4y)=1-3y+6y^2=:h(y)\]
which is minimal at $y^*=1/4$ with value $h(y^*)>0$.
Therefore $K$ is increasing.

\paragraph{Step 3: Monotonicity of $z$.}
Assume now that $z$ has a local maximum at $\overline{x}$ with value
$\overline{z}=z(\overline{x})$.  Then we have
\[\overline{z} \le -K(\overline{\gamma}) \quad \mbox{with}\quad \overline{\gamma}=
\frac{1}{\sqrt{1+e^{2\overline{x}}\overline{z}^2}}.\]
We already know (see \eqref{eq::e20}) that
\[z(x)=-\lambda -\lambda e^{-x} +o(e^{-2x})\]
which shows that $z$ cannot be non-increasing in
$(\overline{x},+\infty)$ (and satisfies
$z(+\infty)=\Phi_r(+\infty)=-\lambda$).  Therefore there exists
$\underline{x}> \overline{x}$ such that $z$ has a local minimum at
$\underline{x}$ with value $\underline{z}=z(\underline{x})$ that we
can choose such that
\begin{equation}\label{eq::*15}
\underline{z}\le \overline{z}\le 0.
\end{equation} 
Moreover we have
\[\underline{z} \ge -K(\underline{\gamma}) \quad \mbox{with}\quad \underline{\gamma}=
\frac{1}{\sqrt{1+e^{2\underline{x}}\underline{z}^2}}< \overline{\gamma}.\]
The strict monotonicity of $K$ implies
\[\overline{z} \le -K(\overline{\gamma}) < -K(\underline{\gamma}) \le \underline{z},\]
which is in contradiction with (\ref{eq::*15}). Therefore, we conclude
that $z$ has no local maximum.

\paragraph{Step 4: Behaviour at $r=0$.}
We recall that $\Phi\in C^2([0,+\infty))$.  From the fact that
  $\lambda t + \Phi(r)$ is a solution of (\ref{eq:main}), we deduce
  that
\[r\lambda=\sqrt{1 + r^2\Phi_r^2} + \Phi_r \left(\frac{2+ r^2 \Phi_r^2}{1+ r^2
  \Phi_r^2}\right) + \frac{r\Phi_{rr}}{1 + r^2\Phi_r^2}.\]
At $r=0$, we deduce that
\begin{equation}\label{eq::e22}
1+2\Phi_r(0)=0.
\end{equation}
Close to $r=0$, we deduce (by Tayor expansion) that
\[\Phi_{rr}(r)=O(r) + \lambda -\frac{1}{r}\left(1+2\Phi_r(r) + O(r^2)\right).\]
Using (\ref{eq::e22}), we deduce that
\[\Phi_{rr}(0)=\frac{\lambda}{3}>0.\]

\paragraph{Step 5: Conclusion.}
Using the fact that $\Phi_{rr}(0)>0$ and the fact that $\Phi_r$ has no
local maximum (by Step 3), we deduce that $\Phi_r$ is increasing,
which in particular implies (\ref{eq::e21}) and (\ref{eq::e33}).  This
ends the proof of the proposition.
\end{proof}

\begin{pro}[Sign and monotonicity of the curvature]\label{pro::e1000}
Let $\Phi$ be the profile given in Proposition~\ref{pro::e10}. Then the
curvature $\kappa_\Phi$ defined in \eqref{def:courbure} satisfies,
\[-1\le \kappa_\Phi\le 0\]
and
\[\kappa_\Phi(0) =-1,\quad \kappa_\Phi(+\infty)=0.\]
Moreover we have
\[(\kappa_\Phi)_r \ge 0.\]
\end{pro}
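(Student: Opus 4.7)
The key observation is that, since $\lambda t + \Phi(r)$ solves \eqref{eq:main} and in view of the relation $rU_t = (1+\kappa_U)\sqrt{1+r^2U_r^2}$ recalled in the introduction, the profile satisfies the explicit identity
\begin{equation}\label{eq::pp1000}
1 + \kappa_\Phi(r) \;=\; \frac{\lambda r}{\sqrt{1 + r^2 \Phi_r^2}} \quad \text{for } r \ge 0.
\end{equation}
The plan is to read off every assertion of the proposition from \eqref{eq::pp1000}, using as only additional input the bounds $-\frac12 \le \Phi_r \le -\lambda$ and $\Phi_{rr} \ge 0$ furnished by Proposition~\ref{pro::*14}.

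First I would establish the two pointwise bounds. The lower bound $\kappa_\Phi \ge -1$ is immediate because the right-hand side of \eqref{eq::pp1000} is non-negative. For the upper bound $\kappa_\Phi \le 0$, the inequality $\Phi_r^2 \ge \lambda^2$ gives $\sqrt{1 + r^2\Phi_r^2} \ge r|\Phi_r| \ge \lambda r$, whence the right-hand side of \eqref{eq::pp1000} is at most $1$.

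Next I would compute the boundary values. Setting $r=0$ in \eqref{eq::pp1000} yields $\kappa_\Phi(0) = -1$ directly. For the limit at $+\infty$, Proposition~\ref{pro::*14} gives $\Phi_r(+\infty) = -\lambda$, so $r^2\Phi_r^2 \sim \lambda^2 r^2$ as $r \to +\infty$ and the right-hand side of \eqref{eq::pp1000} tends to $1$, giving $\kappa_\Phi(+\infty) = 0$.

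Finally, the monotonicity follows by a direct differentiation of \eqref{eq::pp1000}. Setting $A(r) := r\Phi_r(r)$, so that $A_r = \Phi_r + r\Phi_{rr}$, I would compute
\[
(\kappa_\Phi)_r \;=\; \frac{\lambda}{(1+A^2)^{3/2}} \bigl(1 + A^2 - rA A_r\bigr) \;=\; \frac{\lambda}{(1+r^2\Phi_r^2)^{3/2}} \bigl(1 - r^3 \Phi_r \Phi_{rr}\bigr).
\]
Since $\Phi_r < 0$ and $\Phi_{rr} \ge 0$, the term $-r^3\Phi_r\Phi_{rr}$ is non-negative, so the right-hand side is $\ge \lambda/(1+r^2\Phi_r^2)^{3/2} \ge 0$, proving $(\kappa_\Phi)_r \ge 0$. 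There is really no serious obstacle here: the whole argument reduces to exploiting \eqref{eq::pp1000} together with the already established monotonicity and bounds on $\Phi_r$; the only mildly delicate point is making sure the formula \eqref{eq::pp1000} is valid down to $r=0$, which is guaranteed by the $C^2$-regularity of $\Phi$ at the origin shown in Proposition~\ref{pro::ob30}.
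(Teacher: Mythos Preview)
Your proof is correct and takes a genuinely more elementary route than the paper. Both arguments start from the same explicit identity $1+\kappa_\Phi(r)=\lambda r/\sqrt{1+r^2\Phi_r^2}$ (the paper writes it in log coordinates as $\kappa=\lambda e^x/\sqrt{1+u_x^2}-1$), but they diverge sharply after that. For $\kappa_\Phi\le 0$ the paper does not use $\Phi_r\le -\lambda$; instead it differentiates to get $\kappa_x=\lambda e^x\sqrt{1+u_x^2}-\lambda e^{2x}u_x\kappa$ and argues by contradiction that a positive value of $\kappa$ would persist, contradicting $\kappa(+\infty)=0$. For the monotonicity the paper goes further still: it computes $\kappa_{xx}$, shows that $\kappa_{xx}<0$ at any critical point of $\kappa$, rules out local minima, and then uses the endpoint values $\kappa(-\infty)=-1$, $\kappa(+\infty)=0$ to rule out local maxima as well. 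Your argument, by contrast, feeds the already-proved facts $\Phi_r\le -\lambda$ and $\Phi_{rr}\ge 0$ from Proposition~\ref{pro::*14} directly into the identity and its first derivative, and everything drops out in two lines; the computation $(\kappa_\Phi)_r=\lambda(1-r^3\Phi_r\Phi_{rr})/(1+r^2\Phi_r^2)^{3/2}$ even gives the strict inequality $(\kappa_\Phi)_r>0$. The paper's approach has the minor advantage that its proof of $\kappa\le 0$ only needs $\Phi_r\le 0$ rather than the sharper bound $\Phi_r\le -\lambda$, and its second-derivative analysis of $\kappa$ is of some independent interest, but given that Proposition~\ref{pro::*14} precedes this one, your shortcut is entirely legitimate and considerably cleaner.
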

\begin{proof}[Proof of Proposition \ref{pro::e1000}]
We set $\kappa(x):=\kappa_\Phi(e^x)$. Notice that we deduce from 
\eqref{def:courbure} and \eqref{eq::e32} that
\[\kappa_\Phi(r=0)=2\Phi_r(0)=-1.\]

\paragraph{Step 1: $\mathbf{\kappa \in [-1,0]}$.}
Recall that for the profile, we have,
\begin{equation}\label{eq::1001}
\lambda = e^{-x}\sqrt{1+u_x^2} + e^{-2x}u_x + e^{-2x}
\frac{u_{xx}}{1+u_x^2}= e^{-x}(1+\kappa)\sqrt{1+u_x^2}
\end{equation}
where the curvature $\kappa$ can be written as
\begin{equation}\label{eq::1000}
\kappa := e^{-x}\frac{u_x}{\sqrt{1+u_x^2}} + e^{-x} \frac{u_{xx}}{(1+u_x^2)^{\frac32}}.
\end{equation}
Equation~\eqref{eq::1001} shows that we can find the following other
expression for the curvature,
\begin{equation}\label{eq::e30}
\kappa= \frac{\lambda e^x}{\sqrt{1+u_x^2}} -1.
\end{equation}
Using  (\ref{eq::e20}), we then deduce that 
\begin{equation}\label{eq:001}
\kappa(x=+\infty)=0.
\end{equation}
Moreover, using again \eqref{eq::e30}, we have
\begin{align*}
\kappa_x &\displaystyle =  \frac{\lambda e^x}{\sqrt{1+u_x^2}} 
-\frac{\lambda e^x}{(1+u_x^2)^{\frac32}}u_xu_{xx}\medskip\\
&\displaystyle =\frac{\lambda e^x}{\sqrt{1+u_x^2}}  
-\lambda e^{2x} u_x \left(\kappa -e^{-x}\frac{u_x}{\sqrt{1+u_x^2}}\right)\medskip\\
&\displaystyle =\lambda e^x\sqrt{1+u_x^2} -\lambda e^{2x} u_x  \kappa.
\end{align*}
Using the fact that $u_x\le 0$, we conclude that 
\[\kappa(x_0) > 0 \quad \Longrightarrow \quad \kappa(x) \ge
\kappa(x_0) \quad \mbox{for}\quad x\ge x_0\]
which is in contradiction with (\ref{eq:001}). Therefore $\kappa\le 0$.
The fact that $1+\kappa \ge 0$ comes directly from (\ref{eq::e30}).

\paragraph{Step 2: $\kappa$ is non-decreasing.}
Let us start again from
\begin{equation}\label{eq::e31}
\kappa_x= \lambda e^x\sqrt{1+u_x^2} -\lambda e^{2x} u_x  \kappa.
\end{equation}
Then
\begin{align*}
\kappa_{xx} & \displaystyle =\lambda e^x\sqrt{1+u_x^2} -2\lambda e^{2x} u_x  \kappa
+ \lambda e^x\frac{u_xu_{xx}}{\sqrt{1+u_x^2}} -\lambda e^{2x} u_{xx}
\kappa -\lambda e^{2x} u_x  \kappa_x\\
&\displaystyle = 2\kappa_x -\lambda e^x\sqrt{1+u_x^2}  -\lambda e^{2x}
u_x  \kappa_x +\frac{u_{xx}}{u_x}\left(\kappa_x -\frac{\lambda e^x}{\sqrt{1+u_x^2}}\right)\\
&\displaystyle = \kappa_x \left(2-\lambda e^{2x} u_x
+\frac{u_{xx}}{u_x}\right) -\frac{\lambda e^x \sqrt{1+u_x^2}}{u_x}\left( u_x
+ \frac{u_{xx}}{1+u_x^2}\right)\\
&\displaystyle = \kappa_x \left(2-\lambda e^{2x} u_x 
+\frac{u_{xx}}{u_x}\right) -\frac{\lambda e^x \sqrt{1+u_x^2}}{u_x}e^x \sqrt{1+u_x^2} \kappa.
\end{align*}
Recall that $u_x<0$, $\kappa\le 0$ and $\kappa_x=0$ implies in
(\ref{eq::e31}) that $u_x \kappa =e^{-x}\sqrt{1+u_x^2}>0$, which
shows that $\kappa<0$.  Therefore we conclude from the above
computation that
\[\kappa_{xx} <0 \quad \mbox{at any point where}\quad \kappa_x=0.\]
This implies that $\kappa$ can not have local minima.  Because $-1\le
\kappa(x)\le 0$ and $\kappa(-\infty )=-1$, $\kappa(+\infty)=0$, we
deduce that $\kappa$ does not have local maxima neither (which would
imply the existence of a local minimum).  Therefore
\[\kappa_x \ge 0.\]
This ends the proof of the proposition.
\end{proof}

\subsection{Bound from below for the angular velocity}

We next prove the following lemma.
\begin{lem}[Bound from below on $\lambda$]\label{lem::1003}
We have $\lambda \ge 1/4$.
\end{lem}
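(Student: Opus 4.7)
The plan is to specialize the steady-state equation~\eqref{eq:main} at the single radius $r=2$ and exploit the monotonicity property $\Phi_{rr}\ge 0$ established in Proposition~\ref{pro::*14} to discard one term, reducing the bound $\lambda\ge 1/4$ to a short pointwise algebraic inequality.

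Setting $p:=-\Phi_r(2)$, which lies in $[\lambda,1/2]$ by \eqref{eq::e33}, and plugging $U=\lambda t+\Phi(r)$ and $r=2$ into \eqref{eq:main}, one gets
\[
2\lambda = \sqrt{1+4p^2} - \frac{p(2+4p^2)}{1+4p^2} + \frac{2\Phi_{rr}(2)}{1+4p^2}.
\]
Since the last term is nonnegative, this yields
\[
2\lambda \;\ge\; Q(p),\qquad Q(p):=\sqrt{1+4p^2}-\frac{p(2+4p^2)}{1+4p^2}.
\]
It remains to show that $Q(p)\ge 1/2$ for every $p\ge 0$, which then gives $\lambda\ge 1/4$ directly.

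For the pointwise inequality, I would substitute $s=2p$ and clear denominators by multiplying by $2(1+s^2)>0$; the inequality $Q(p)\ge 1/2$ becomes
\[
2(1+s^2)^{3/2}\;\ge\;(1+s^2)+s(2+s^2).
\]
The key observation is the factorization $(1+s^2)+s(2+s^2)=1+2s+s^2+s^3=(1+s)(1+s^2)$, which after dividing by $1+s^2>0$ collapses the problem to $2\sqrt{1+s^2}\ge 1+s$. Squaring yields $3s^2-2s+3\ge 0$, a quadratic with discriminant $4-36<0$, hence everywhere positive.

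The whole argument rests on $\Phi_{rr}\ge 0$ from Proposition~\ref{pro::*14}, so the qualitative work carried out earlier in Section~\ref{s5} is exactly what unlocks this one-line estimate. I do not anticipate any serious obstacle; the only mildly delicate point is spotting the factorization $(1+s)(1+s^2)$ that reduces the cubic to a quadratic, and the particular choice $r=2$ which makes the algebra clean.
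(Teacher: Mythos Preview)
Your overall strategy---evaluate the steady-state equation at a single radius and discard the $\Phi_{rr}$ term using Proposition~\ref{pro::*14}---is sound and considerably shorter than the paper's argument. The paper instead builds an explicit comparison function $\varphi_2(x)=-\mu e^x$ (a cone in $r$-variables), computes $h_\mu(x)=F(x,(\varphi_2)_x,(\varphi_2)_{xx})$, and shows that $\alpha=1/4$ is a lower bound for $\inf_x h_\mu$ when $\mu=1/(2\sqrt{2})$; the comparison principle then forces $\lambda\ge\alpha$. Your approach avoids this global comparison entirely by cashing in on the convexity of $\Phi$ already obtained in Section~\ref{s5}.

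However, your algebraic step contains an error. You claim
\[
(1+s^2)+s(2+s^2)=1+2s+s^2+s^3=(1+s)(1+s^2),
\]
but $(1+s)(1+s^2)=1+s+s^2+s^3$, which differs from the left-hand side by $s$. So the reduction to $2\sqrt{1+s^2}\ge 1+s$ is not justified as written.

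The fix is short. Write $1+2s+s^2+s^3=(1+s)(1+s^2)+s$, divide the desired inequality $2(1+s^2)^{3/2}\ge(1+s)(1+s^2)+s$ by $1+s^2>0$, and use $\dfrac{s}{1+s^2}\le\dfrac12$ to reduce to
\[
2\sqrt{1+s^2}\ \ge\ (1+s)+\tfrac12=\tfrac32+s\qquad (s\ge 0).
\]
Squaring gives $3s^2-3s+\tfrac74\ge 0$, whose discriminant $9-21<0$, so the inequality holds for all $s$. With this correction your argument goes through and yields $\lambda\ge 1/4$.
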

\begin{proof}[Proof of Lemma \ref{lem::1003}]
The proof proceeds in several steps. 

\paragraph{Step 1: comparison.}
The idea is to revisit the proof of the uniqueness of $\lambda$.  For
some $\mu>0$, we set
\[\varphi_1:=\varphi\quad \mbox{and}\quad \varphi_2:=-\mu e^x.\]
If 
\[\mu> \lambda,\]
then a comparison of the behaviour at $x=+\infty$ implies that
\[\varphi_2 \le \varphi_1 +K \quad \mbox{on}\quad \R\]
for some suitable constant $K$.
We recall that
\[\lambda =F(x,\varphi_x,\varphi_{xx}),\]
with $F$ defined in \eqref{eq::mainx}. We then define
\[\begin{array}{ll}
h_\mu(x) &:=F(x,(\varphi_2)_x,(\varphi_2)_{xx})\\
\\
& \displaystyle = \sqrt{e^{-2x}+\mu^2} -\mu e^{-x}\left(1+\frac{1}{1+\mu^2 e^{2x}}\right).
\end{array}\]
If 
\begin{equation}\label{eq::1004}
\alpha \le \inf_{x\in\R} h_\mu(x),
\end{equation}
then we can take $\lambda_2=\alpha$ and we see with $\lambda_1:=\lambda$ that
\[\lambda_2 t +\varphi_2(x) \le \lambda_1 t +\varphi_1(x) +K\]
is true at $t=0$ and then is true for every time $t\ge 0$, because the
left hand side is a subsolution and the right hand side is a
solution. Then we conclude that
\[\alpha= \lambda^2 \le \lambda^1 =\lambda\]
i.e.
\begin{equation}\label{eq::1005}
\mu>\lambda \quad \Longrightarrow \quad \lambda \ge \alpha \quad
\mbox{if}\quad \alpha \mbox{ satisfies (\ref{eq::1004}).}
\end{equation}

\paragraph{Step 2: estimate on $\alpha$ and conclusion.}
Remark that (\ref{eq::1004}) is satisfied for $\alpha\ge 0$ if and only if
\begin{equation}\label{eq::pp1}
\left(\alpha + \mu e^{-x}\left(1+\frac{1}{1+\mu^2 e^{2x}}\right) \right)^2 \le e^{-2x}+\mu^2
\end{equation}
Because we have
\[  \left(\alpha + \mu e^{-x}\left(1+\frac{1}{1+\mu^2 e^{2x}}\right) \right)^2\le
2\alpha^2 + 2e^{-2x} \mu^2 2^2\] 
we see that inequality \eqref{eq::pp1} is satisfied in particular
if
\[2\alpha^2 \le \mu^2 \quad \mbox{and}\quad 8\mu^2 \le 1.\]
For instance for 
\[\mu=1/(2\sqrt{2}) \quad \mbox{and}\quad \alpha = 1/4,\]
we conclude from (\ref{eq::1005}) that $\lambda\ge 1/4$.
This ends the proof of the lemma.
\end{proof}
\begin{proof}[Proof of Theorem~\ref{th:steady}]
Apart from \eqref{eq:profil courbure}, Theorem~\ref{th:steady} is then
a consequence of Propositions~\ref{pro::e1}, \ref{pro::e10},
\ref{pro::*14}, \ref{pro::e1000} and Lemma~\ref{lem::1003}.
As far as \eqref{eq:profil courbure} is concerned, it is a simple
consequence of 
\[0 \le 1 + \kappa_\Phi = \frac{r\lambda}{\sqrt{1+r^2 \Phi_r^2}}.\]
The proof of Theorem~\ref{th:steady} is now complete.
\end{proof}

\section{A Liouville result}\label{s6}

This section is devoted to the proof of a Liouville result (Theorem
\ref{theo:liouville}) for global solutions of \eqref{eq:main}.
This Liouville result will be used in the next section. The Liouville
Theorem~\ref{theo:liouville} classifies global space-time
solutions. Such kind of results have been for instance obtained for
certain nonlinear heat equations in \cite{GK, MZ}, where the nonlinearity
comes from the source term. On the contrary, the nonlinearity in our
problem comes from the geometry itself.

In order to prove Theorem~\ref{theo:liouville}, we first prove two
comparison principles: one for small $r$'s (i.e. in $\R\times
[0,r_0^-)$), and one for large $r$'s (i.e. in $\R\times
  [r_0^+,+\infty)$).
\begin{pro}[Comparison principle for small $r$'s]\label{prop:comp-r-petit}
Given some constant $C>0$, there exists some $r_0^-=r_0^-(C)>0$ such
that the following holds for every $r_0\in (0,r_0^-]$.  Let $U\in
  C^{2,1}(\R\times [0,r_0])$ be a subsolution and $V\in
  C^{2,1}(\R\times [0,r_0])$ be a supersolution of \eqref{eq:main} in
  $\R \times (0,r_0)$ satisfying
\begin{equation}\label{eq::ob9}
1+2V_r(t,0)\le 0 \le 1 + 2U_r(t,0)  \quad \mbox{for all}\quad t\in\R.
\end{equation}
Assume moreover that we have
\begin{equation}\label{eq::ob2}
\left\{\begin{array}{l}
|U_r|, |V_r|\le C,\\
|rU_{rr}|\le C,\\
|U-V|\le C.
\end{array}\right.
\end{equation}
If $U \le V$ in $\R\times \left\{r_0\right\}$, then $U \le V$ in $\R
\times [0,r_0]$.
\end{pro}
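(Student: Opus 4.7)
The plan is to argue by contradiction, combining an affine-in-$r$ perturbation that strictifies all the relevant inequalities with a time-translation compactness argument that localizes the supremum of the difference.

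First I would introduce, for $\varepsilon>0$, the perturbed functions $\tilde U^\varepsilon(t,r):=U(t,r)+\varepsilon r$ and $V^\varepsilon(t,r):=V(t,r)+\varepsilon r_0$. The perturbation changes $U_r$ by $+\varepsilon$ but leaves $U_{rr}$ and $U_t$ unchanged, and a direct computation shows that the partial derivative of the right-hand side $F(p,q,r)$ of \eqref{eq:main} with respect to $p$ satisfies $F_p(p,q,r)=2+O(r)$ uniformly on the admissible range of $(p,q)$ --- the bound $|rU_{rr}|\le C$ in \eqref{eq::ob2} being precisely what controls the only $q$-dependent term of $F_p$, namely $-2r^3 pq/(1+r^2p^2)^2$. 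Hence for $r_0\le r_0^-(C)$ small enough one has $F_p\ge 1$ along the perturbation path, and the mean value theorem yields the strict subsolution inequality
\[ r\tilde U^\varepsilon_t \le F(\tilde U^\varepsilon_r,\tilde U^\varepsilon_{rr},r) - \varepsilon \quad \text{in } \R\times(0,r_0), \]
together with the strict Neumann condition $1+2\tilde U^\varepsilon_r(t,0)\ge 2\varepsilon$. By translation invariance of \eqref{eq:main}, $V^\varepsilon$ is still a supersolution with $1+2V^\varepsilon_r(t,0)\le 0$, and at $r=r_0$ one has $\tilde U^\varepsilon\le V^\varepsilon$.

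Next I would suppose by contradiction that $M_\varepsilon:=\sup_{\R\times[0,r_0]}(\tilde U^\varepsilon-V^\varepsilon)>0$ and pick a maximizing sequence $(t_n,r_n)$. Along a subsequence, $r_n\to r_\infty\in[0,r_0]$ and $U(t_n,0)-V(t_n,0)\to c_\infty\in[-C,C]$. The time-translated functions $U(\cdot+t_n,\cdot)-U(t_n,0)$ and $V(\cdot+t_n,\cdot)-V(t_n,0)$ are equi-Lipschitz and, by \eqref{eq::ob2} together with standard interior parabolic estimates (in the spirit of Proposition~\ref{pro::ob30}), equicontinuous in $C^{2,1}$ on compact subsets. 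Arzel\`a--Ascoli produces locally uniform $C^{2,1}$ limits $\bar U,\bar V$ satisfying \eqref{eq:main}, \eqref{eq::ob9} and the corresponding strict inequalities. The function $\bar W:=\bar U-\bar V+c_\infty$ then arises as a locally uniform limit of $(\tilde U^\varepsilon-V^\varepsilon)(\cdot+t_n,\cdot)$, is bounded above by $M_\varepsilon$, satisfies $\bar W(\cdot,r_0)\le 0$, and attains $\bar W(0,r_\infty)=M_\varepsilon$.

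A case analysis on $r_\infty$ will close the proof. Linearizing the difference of the strict inequalities by the mean value theorem (first in the $q$-slot, giving $B=r/(1+r^2\bar V_r^2)\ge 0$; then in the $p$-slot with $q=\bar U_{rr}$ fixed, giving $|A|\le C'$) yields
\[ r\bar W_t \le A(t,r)\bar W_r + B(t,r)\bar W_{rr} - \varepsilon \quad \text{in } \R\times(0,r_0). \]
If $r_\infty=r_0$, then $M_\varepsilon=\bar W(0,r_0)\le 0$, contradiction. If $r_\infty\in(0,r_0)$, the interior-max relations $\bar W_t=0,\bar W_r=0,\bar W_{rr}\le 0$ reduce the strict inequality to $0\le -\varepsilon$, contradiction. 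If $r_\infty=0$, the one-sided maximum at $r=0$ forces $\bar W_r(0,0)\le 0$, contradicting the strict Neumann condition $\bar W_r(t,0)\ge 2\varepsilon$. Hence $M_\varepsilon\le 0$, so $U-V\le \varepsilon(r_0-r)$; letting $\varepsilon\to 0$ gives $U\le V$. The main obstacle is the compactness step: I need locally uniform $C^{2,1}$ convergence of the time-translates so that the pointwise derivatives at the maximum make classical sense, which relies crucially on the uniform $C^{2,1}$ bound \eqref{eq::ob2}; the smallness $r_0\le r_0^-(C)$ is used both to keep $F_p$ close to $2$ (providing the slack $\varepsilon$ in the strict subsolution inequality) and to keep the linearization coefficient $A$ bounded.
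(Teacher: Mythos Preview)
Your perturbation $\tilde U^\varepsilon=U+\varepsilon r$ and the three-case analysis at a putative maximum are sound, and your treatment of the boundary $r=0$ via the strictified Neumann inequality is exactly the right mechanism. The genuine gap is the compactness step. Hypothesis \eqref{eq::ob2} controls only $U_r$, $V_r$, $rU_{rr}$ and the difference $U-V$; it gives no bound on $U_t$, $V_t$, $V_{rr}$, nor on $U_{rr}$ uniformly up to $r=0$. Since $U$ is merely a subsolution and $V$ merely a supersolution, parabolic regularity theory does not apply to them individually --- Proposition~\ref{pro::ob30} in particular is written for \emph{solutions} --- so there is no reason for the time-translates $U(\cdot+t_n,\cdot)-U(t_n,0)$ to be equi-Lipschitz in time, let alone precompact in $C^{2,1}$. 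Without this, the limit $\bar W$ with a classically attained interior maximum, on which your three cases rest, is simply not available.

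The paper avoids this difficulty altogether by building an explicit barrier $\Psi(t,r)=e^{-\mu t}\cos\!\big(\tfrac{\pi r}{4r_0}\big)+f(t)$ with $f(t)\to+\infty$ as $t\to+\infty$; the key structural observation is that the drift coefficient in the linearized equation for $W=U-V$ contains the singular term $K(rV_r)/r\sim 2/r$ and is therefore \emph{nonnegative} for $r_0$ small, which together with $\Psi_r\le 0$ makes $\Psi$ a supersolution. One then slides $\varepsilon^\ast\Psi$ down until it touches $W$, the coercivity in $t$ forcing the contact point to be at finite time, where the strong maximum principle (interior) or the Neumann condition (at $r=0$, after the further tilt $\Psi\mapsto\Psi-\eta r$) closes the argument. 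If you wish to keep your strict-inequality framework, a clean repair that bypasses compactness is to penalize directly in time: study $\sup\big(\tilde U^\varepsilon-V^\varepsilon-\eta\sqrt{1+t^2}\big)$, which is attained since $|U-V|\le C$, and observe that at an interior maximum the penalization contributes at most $r_0\eta$ on the left of your linearized inequality, absorbed by the slack $-\varepsilon$ once $\eta<\varepsilon/r_0$; your three cases then go through verbatim, with all derivatives evaluated classically at a genuine point because $U,V\in C^{2,1}(\R\times[0,r_0])$.
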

\begin{rem}[The Neumann boundary condition]
Notice that condition (\ref{eq::ob9}) can be seen as the
evaluation on the boundary $r=0$ of the inequalities in equation
(\ref{eq:main}) associated to subsolutions $U$ and supersolutions $V$.
\end{rem}
\begin{proof}[Proof of Proposition \ref{prop:comp-r-petit}]
The proof proceeds in several steps. 

\paragraph{Step 1: subsolution $W=U-V$.}
We set $W=U-V$. We write the difference of the two inequalities
satisfied by $U$ and $V$, which gives
\begin{multline*}
rW_t\le G(rU_r)-G(rV_r) + U_r(K(rU_r)-K(rV_r)) \\
+ (U_r-V_r)K(rV_r) + rU_{rr}(H(rU_r)-H(rV_r)) + r H(rV_r) W_{rr}
\end{multline*}
with
\begin{equation}\label{eq::ob21}
G(p)=\sqrt{1+p^2},\quad K(p)=\frac{2+p^2}{1+p^2},\quad H(p)=\frac{1}{1+p^2}.
\end{equation}
This leads to
\begin{equation}\label{eq::ob5}
W_t \le A W_r  + H(rV_r) W_{rr} \quad \mbox{on}\quad \R\times (0,r_0)
\end{equation}
with
\begin{equation}\label{eq::ob20}
A= a + \frac{K(rV_r)}{r} + b +c
\end{equation}
where
\begin{equation}\label{eq::ob4}
\left\{\begin{array}{l}
a= \int_0^1 ds\ G'(r(U_r-sW_r)),\\
b=U_r \int_0^1 ds\ K'(r(U_r-sW_r)),\\
c =  rU_{rr} \int_0^1 ds\ H'(r(U_r-sW_r)).
\end{array}\right.
\end{equation}
Using (\ref{eq::ob2}) and the fact that $|G'(p)|\le |p|$ and
$|K'(p)|=|H'(p)|\le 2|p|$, this implies that
\[A\ge -r_0C + \frac{2}{r_0} -2r_0 C^2 -2r_0 C^2.\]
Choosing then $r_0=r_0(C)>0$ small enough, we deduce that
\begin{equation}\label{eq::ob6}
A\ge 0 \quad \mbox{and}\quad H(rV_r)\ge \frac12.
\end{equation}

\paragraph{Step 2: supersolution $\Psi$.}
The goal is now to construct a non-negative supersolution
(i.e. satisfying the reverse inequality in (\ref{eq::ob5})) which
explodes as $|t|\to +\infty$.  We define for some $\mu>0$
\[\Psi(r,t)=e^{-\mu t}\zeta(r)+f(t)\]
with
\[0\le f\in C^\infty (\R)\quad {\rm s.t.}\quad 
\left\{\begin{array}{l}
f(t)=0 \quad {\rm if} \; t<0\\
f'\ge 0\\
f(t)\to+\infty \quad {\rm as}\; t\to +\infty
\end{array}
\right.\]
such that we have
\[\left\{\begin{array}{l}
-\mu \zeta \ge \frac12 \zeta_{rr} \quad \mbox{in}\quad (0,r_0),\\
\zeta_r(0)=0.
\end{array}\right.\]
We can simply choose
$\zeta(r):=\cos\left(\frac{\pi}{4}\frac{r}{r_0}\right)$ with
$2\mu:=\left(\frac{\pi}{4r_0}\right)^2$.  Because $\zeta_r\le 0,
\zeta_{rr}\le 0$ on $(0,r_0)$, we get, using (\ref{eq::ob6}), that
\[\Psi_t \ge \frac12 \Psi_{rr} \ge A\Psi_{r} + H(ru_r)\Psi_{rr} \quad
\mbox{on}\quad \R\times (0,r_0).\]

\paragraph{Step 3: contact point.}
Notice that $\Psi \ge \delta >0$ on $\R\times [0,r_0]$.  Then for
$\varepsilon>0$ large enough, we have:
\[\varepsilon \Psi  \ge W \quad \mbox{on}\quad \R\times [0,r_0].\]
We can then decrease $\varepsilon$ untill we get a contact point,
\[\varepsilon^*=\inf\{\varepsilon \ge 0,\; \varepsilon \Psi \ge W
\quad \mbox{on}\quad \R\times [0,r_0]\}.\]
We now want to show that $\varepsilon^*=0$. By contradiction, assume
that $\varepsilon^*>0$. We have
\begin{equation}\label{eq::ob7}
\inf_{(t,r)\in \R\times [0,r_0]} \{\e^*\Psi -W\}=0.
\end{equation}
Because $W$ is bounded and 
\[\liminf_{|t|\to +\infty} \inf_{r\in [0,r_0]} \Psi(t,r) =+\infty\]
we deduce that the infimum in \eqref{eq::ob7} is reached at some point
$(t^*,r^*)\in \R\times [0,r_0]$.  Because $\varepsilon^*\Psi\ge
\varepsilon^*\delta>0$ and $W\le 0$ for $r=r_0$ we deduce that $r^*\in
           [0,r_0)$.  Recall that
\[\bar W = \varepsilon^* \Psi- W\]
solves
\[\left\{\begin{array}{l}
\left.\begin{array}{l}
\bar W_t \ge A \bar W_r  + H(rV_r) \bar W_{rr},\\
\bar W \ge 0
\end{array}\right|\quad \mbox{on}\quad \R\times (0,r_0),\\
\ \bar W(t^*,r^*)=0,\\
\ \bar W_r(t,0)\le 0 \quad \mbox{for all}\quad t\in\R,
\end{array}\right.\]
and as a consequence of our assumptions, 
the functions $A$ and $H(rV_r)$ are continuous on $\R\times (0,r_0]$.

\paragraph{Case 1: $r^*>0$.}
Then we can apply the strong maximum principle (see
Theorem~\ref{theo:max-fort}) and deduce that
\begin{equation}\label{eq::ob8}
\varepsilon^* \Psi =W \quad \mbox{on}\quad (-\infty,t^*]\times [0,r_0],
\end{equation}
which is absurd for $r=r_0$.

\paragraph{Case 2: $r^*=0$.}
If the coefficient $A$ would have been continuous up to $r=0$, then we
would have applied Hopf lemma (see Lemma~\ref{lem:hopf}) to deduce
again (\ref{eq::ob8}), in order to get the same contradiction.

The difficulty here is that the coefficient $A$ blows-up as $r$ goes
to zero.  We can easily circumvent this difficulty, if we replace
$\Psi$ with
\[\tilde{\Psi}:=\Psi -\eta r\]
for some $\eta >0$ small enough. Now at the point $(t^*,0)$ of minimum
of $\bar W=\varepsilon^*\tilde{\Psi} -W$, we get in particular that
\[0\le \bar W_r(t^*,0)= -\varepsilon^*\eta -W_r(t^*,0).\]
On the other hand, we have by assumption
\[W_r(t^*,0) = (U_r-V_r)(t^*,0) \ge 0\]
which gives a contradiction.  Therefore, in all cases, we conclude
that $\varepsilon^* =0$, which means that $W\le 0$.  This ends the
proof of the proposition.
\end{proof}

\begin{pro}[Comparison principle for large $r$'s]\label{prop:comp-r-grand}
Given some constants $\lambda>0$, $\delta>0$ and $L_0\ge 1$, there
exists $r_0^+ =r_0^+(\delta, L_0,\lambda)>0$ such that the following
holds for all $r_0\in [r_0^+,+\infty)$.  Let $U\in C^{2,1}(\R \times
  [r_0,+\infty))$ be a subsolution and $V\in C^{2,1}(\R \times
    [r_0,+\infty))$ be a supersolution of \eqref{eq:main} on $\R
      \times (r_0,+\infty)$, satisfying in $\R \times [r_0,+\infty)$,
\begin{equation}\label{eq:200}
\left\{\begin{array}{l}
-L_0\le U_r, V_r\le -\delta,\\
|U(t,r)-\lambda t-\Phi_0(r)|\le C,\\
|V(t,r)-\lambda t-\Phi_0(r)|\le C,\\
\left|(\Phi_0)_r(r)\right|\le L_0
\end{array}\right.
\end{equation}
for some function $\Phi_0$ and some constant $C>0$.

 If $U \le V$ on $\R\times \left\{r_0\right\}$, then $U \le V$ in $\R
 \times [r_0,+\infty)$.
\end{pro}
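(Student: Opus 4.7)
I plan to follow the same blueprint as Proposition~\ref{prop:comp-r-petit}. Setting $W := U - V$, which satisfies $|W| \le 2C$ on $\R \times [r_0, +\infty)$ and $W \le 0$ on $\R \times \{r_0\}$, I would linearize the subtracted inequalities for $U$ and $V$ via the mean-value theorem applied to $G, K, H$ as in~\eqref{eq::ob21}, producing a linear parabolic inequality
\[W_t \le A(t, r)\, W_r + B(t, r)\, W_{rr} \quad \text{on } \R \times (r_0, +\infty),\]
with $B = H(rU_r) \ge 0$ and $A$ the analogue of~\eqref{eq::ob20}. I would then construct a positive strict supersolution $\Psi$ of the associated linear operator, proper on $\R \times [r_0, +\infty)$, and conclude by the contact-point argument $\varepsilon^* := \inf\{\varepsilon > 0 : \varepsilon\Psi \ge W\}$.

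Two features distinguish the large-$r$ case from the small-$r$ one. First, for $r$ large the dominant contribution to $A$ comes from $a = \int_0^1 G'(r(U_r - sW_r))\,ds$, which tends to $-1$ since $G'(p) \to -1$ as $p \to -\infty$ and $r(U_r - sW_r) \le -\delta r$. So the leading sign of $A$ is \emph{negative}, opposite to the small-$r$ case where the dominant term $K(rV_r)/r \sim 2/r$ was large positive. The other contributions $K(rV_r)/r$, $V_r \int K'(\ldots)$ and $rV_{rr} \int H'(\ldots)$ are of lower order; the only delicate one is the $V_{rr}$-term, which I would control by eliminating $rV_{rr}$ via the equation~\eqref{eq:main} for $V$ itself --- writing $rV_{rr} = H(rV_r)^{-1}\bigl(rV_t - \sqrt{1+r^2V_r^2} - V_r K(rV_r)\bigr)$ and then estimating $V_t$ from the equation using the bounds $-L_0 \le V_r \le -\delta$. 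Combined with $B = H(rU_r) \le (1 + r^2\delta^2)^{-1}$, this yields $A \le -\tfrac12$ and $0 \le B \le C_0/r^2$ uniformly on $\R \times [r_0^+, +\infty)$, provided $r_0^+ = r_0^+(\delta, L_0, \lambda)$ is large enough.

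Since the $r$-domain is unbounded, $\Psi$ must grow both as $|t| \to +\infty$ and as $r \to +\infty$. In view of the reversed sign of $A$, I would take
\[\Psi(t, r) = e^{-\mu t}\, e^{\alpha(r - r_0)} + \eta\,(r - r_0) + f(t),\]
with $\alpha, \eta > 0$ small (depending on $r_0^+, \delta$), $\mu = \alpha/4$, and $f \in C^\infty(\R; [0, +\infty))$ nondecreasing with $f \equiv 0$ on $(-\infty, 0]$ and $f(t) \to +\infty$ as $t \to +\infty$. A direct computation, using $\mu + A\alpha + B\alpha^2 \le 0$ (from $A \le -\tfrac12$ and $\alpha$ small), yields
\[\Psi_t - A\, \Psi_r - B\, \Psi_{rr} \;\ge\; -A\,\eta \;\ge\; \tfrac{\eta}{2} > 0,\]
so $\Psi$ is a strict supersolution. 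It is also proper on $\R \times [r_0, +\infty)$: the first term handles $t \to -\infty$, $f(t)$ handles $t \to +\infty$, and $\eta(r - r_0)$ handles $r \to +\infty$.

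Properness and $|W| \le 2C$ then ensure that the infimum $0$ of $\varepsilon^*\Psi - W$ is attained at some $(t^*, r^*)$. Since $\varepsilon^*\Psi(t^*, r_0) = \varepsilon^*(e^{-\mu t^*} + f(t^*)) > 0 \ge W(t^*, r_0)$, the contact cannot occur on the boundary, so $r^* > r_0$. The first-order conditions $W_t = \varepsilon^*\Psi_t$, $W_r = \varepsilon^*\Psi_r$, $W_{rr} \le \varepsilon^*\Psi_{rr}$ at this interior minimum, together with $B \ge 0$ and the subsolution inequality for $W$, yield $0 \ge \varepsilon^*\bigl(\Psi_t - A\Psi_r - B\Psi_{rr}\bigr) \ge \varepsilon^* \eta/2 > 0$, a contradiction. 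Hence $\varepsilon^* = 0$ and $W \le 0$. The main technical obstacle is the uniform sign estimate $A \le -\tfrac12$ on $\R \times [r_0^+, +\infty)$, which plays the role of the hypothesis $|r U_{rr}| \le C$ in the small-$r$ case and accounts for the dependence $r_0^+ = r_0^+(\delta, L_0, \lambda)$.
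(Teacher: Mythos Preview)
Your plan has a genuine gap at the step you yourself flag as the ``main technical obstacle'': the uniform estimate $A\le -\tfrac12$. In the linearization, the drift coefficient $A$ contains a term of the form
\[
c'\;=\; rV_{rr}\int_0^1 H'\bigl(r(V_r+sW_r)\bigr)\,ds
\]
(or the analogous term with $rU_{rr}$, depending on which way you split $rU_{rr}H(rU_r)-rV_{rr}H(rV_r)$). In Proposition~\ref{prop:comp-r-petit} this is harmless because the hypothesis $|rU_{rr}|\le C$ is explicitly assumed there. Here no such bound is available: the assumptions~\eqref{eq:200} control only $U_r,V_r$ and the zeroth-order distance to $\lambda t+\Phi_0$; they say nothing about $U_t,V_t,U_{rr},V_{rr}$. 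Your suggestion to ``eliminate $rV_{rr}$ via the equation~\eqref{eq:main} for $V$ and then estimate $V_t$ from the equation'' is circular: the only relation you have between $V_t$ and $V_{rr}$ is the supersolution inequality itself, which gives a one-sided bound on $rV_{rr}$ in terms of $V_t$, but $V_t$ is not controlled by any hypothesis and cannot be recovered from the same inequality. Even if $V_t$ were bounded, the resulting bound on $rV_{rr}$ would be of order $r^3$, which after multiplication by $\int H'\sim r^{-3}$ leaves an $O(1)$ contribution to $A$ of uncontrolled sign. So the estimate $A\le -\tfrac12$ cannot be obtained this way, and the barrier argument built on it collapses.

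The paper circumvents this by \emph{not} linearizing. It argues by contradiction and doubles the space variable: one penalizes
\[
U(t,r)-V(t,\rho)-\frac{|r-\rho|^2}{2\alpha}-\frac{\alpha r^2}{2}-\frac{\eta t^2}{2},
\]
so that at the maximum the second-order information enters only through the matrix inequality
\[
\begin{pmatrix}\tilde U_{rr}&0\\0&-V_{rr}\end{pmatrix}\le\frac{1}{\alpha}\begin{pmatrix}1&-1\\-1&1\end{pmatrix},
\]
which requires no pointwise bound on $U_{rr}$ or $V_{rr}$. The contradiction then comes from a direct estimate on $I_1=\tfrac1r G(rp+\alpha r^2)-\tfrac1\rho G(\rho p)$: for $r_0$ large enough one has $G'(-r_0\delta)\le -\tfrac12$, yielding $I_1\le -\tfrac12\alpha r+2\alpha L_0^2$, and this negative term of size $\alpha r$ dominates $I_2,I_3$ once $r_0$ exceeds an explicit threshold depending on $\delta,L_0$. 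This is where the choice $r_0^+=r_0^+(\delta,L_0,\lambda)$ actually enters, not through any drift-sign computation for a linearized operator.
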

\begin{proof}[Proof of Proposition \ref{prop:comp-r-grand}]
We have:
\[U_t \le\frac 1 r  G(rU_r) +\frac {U_r} r K(rU_r) + U_{rr} \sigma^2(rU_r)\]
and
\[V_t \ge\frac 1 r G(rV_r) +\frac {V_r}r K(rV_r) + V_{rr}\sigma^2(rV_r)\]
with
\[G(a)=\sqrt{1+a^2},\quad K(a)=\frac{2+a^2}{1+a^2},\quad \sigma(a)=\frac{1}{\sqrt{1+a^2}}.\]

By contradiction, assume that 
\[M=\sup_{(t,r)\in \R\times [r_0,+\infty)}\left\{U(t,r)-V(t,r)\right\}>0.\]
For $ \a,\eta>0$, we set
\[M_{\a,\eta}=\sup_{t\in \R,\ r,\rho\ge r_0}\left\{U(t,r)-V(t,\rho)
-\frac{|r-\rho|^2}{2\a}-\a\frac{r^2}2-\eta\frac{t^2}2\right\}\]
which satisfies
\begin{equation}\label{eq::ob10}
M_{\a,\eta}\ge \frac{M}{2}>0 \quad \mbox{for $\alpha,\eta$ small enough.}
\end{equation}
Since $U(t,r)-V(t,\rho)\le 2C+ \Phi_0(r)-\Phi_0(\rho)\le 2C
+L_0|r-\rho|$ (using the $L_0$-Lipschitz property of the profile
$\Phi_0$), we deduce that this supremum is reached at a point that we
denote by $(t,r,\rho)$. It satisfies
\[\eta \frac{t^2}{2} + \alpha \frac{r^2}{2} \le 2C 
- \frac{M}{2} + L_0|r-\rho| - \frac{|r-\rho|^2}{2\alpha} \le 2C
 - \frac{M}{2} + \frac{\alpha L_0^2}{2}\]
which in turn implies  (for fixed $\alpha>0$) 
\begin{equation}\label{eq:214}
\lim_{\eta \to 0} \eta t=0.
\end{equation}
\bigskip
We next distinguish two cases. 

\noindent{\bf Case 1: $r,\rho>r_0$.}  In that case, setting
$\displaystyle \tilde{U}(t,r)=U(t,r)-\alpha\frac{r^2}{2}$, we get with
$a=\tilde{U}_t(t,r),b=V_t(t,\rho), A=\tilde{U}_{rr}(t,r),
B=V_{rr}(t,\rho)$ that
\begin{equation}\label{eq:201}
a\le \frac 1 r G(rp+\a r^2)+\frac {p+\a r}r K(rp+\a r^2)+(A+\a)\sigma^2(rp+\a r^2)
\end{equation}
\begin{equation}\label{eq:202}
b\ge \frac 1 \rho G(\rho p)+\frac p \rho K(\rho p)+B\sigma^2(\rho p)
\end{equation}
$$a-b=\eta t$$
\begin{equation}\label{eq:203}
\left(\begin{array}{cc}
A&0\\
0&-B
\end{array}
\right)
\le \frac 1 \a 
\left(\begin{array}{cc}
1&-1\\
-1&1
\end{array}
\right)
\end{equation}
where $\displaystyle p:= \frac {r-\rho}\a$ satisfies (using equation
\eqref{eq:200} with $p=V_r(t,\rho)=\tilde{U}_r(t,r)$)
$$-L_0\le p,p+\a r\le -\delta.$$
Subtracting \eqref{eq:202} to \eqref{eq:201}, we get that
\begin{equation}\label{eq:210}
\eta t \le \I_1+\I_2+\I_3
\end{equation}
where
$$\I_1:=\frac 1 rG(rp+\a r^2)-\frac 1 \rho G(\rho p),\quad
\I_2:=\frac {p+\a r}r K(rp+\a r^2)-\frac p \rho K(\rho p)$$
and 
\[\I_3:=(A+\a)\sigma^2(rp+\a r^2)-B\sigma^2(\rho p).\]

\paragraph{Estimate on $\I_1$.}
We have
\begin{align*}
\I_1=&\frac 1 rG(rp+\a r^2)-\frac 1 r G(\rho p)+\frac 1 r G(\rho p)-\frac 1 \rho G(\rho p)\\
\le&G'(-r_0\delta)\left(\frac {(r-\rho)p}r+\a r\right)
+ \left(\frac {\rho-r}{r\rho}\right)G(\rho p)\\
\le &G'(-r_0\delta)\left(\frac {\a p^2}r+\a r\right)
+ \left(\frac {\rho-r}{r\rho}\right)(1+\rho|p|)\\
\le &G'(-r_0\delta)\left(\frac {\a p^2}r+\a r\right)
+\frac {\a|p|}{r\rho}+ \frac {\a p^2}r
\end{align*}
where, for the second line, we have used that $rp+\a r^2,\rho p\le
-r_0 \delta$ and $G'$ is non-decreasing on $(-\infty, -\delta r_0)$
and for the third line, we have used that $G(a)\le 1+|a|$. Choosing
$r_0$ such that $G'(-r_0\delta)\le -\frac 12$, and such that $r_0\ge
\frac 1 \delta\ge 1$ we get that
\begin{equation}\label{eq:211}
\I_1\le  -\frac 12 \a r+ {2\a}L_0^2.
\end{equation}
where we have used that $|p|\le L_0$ and $L_0\ge 1$.

\paragraph{Estimate on $\I_2$.}
Using that $K$ is bounded by $2$, we have
\[\begin{array}{ll}
\I_2 & \le  \quad \displaystyle \frac p r K(rp+\a r^2)- \frac p \rho K(\rho p) + 2 \a\\
& \le \quad \displaystyle \frac p r \left(K(rp+\a r^2)-K(\rho p)\right)+ 2\a
\end{array}\]
where we have used the fact that $p\le 0$, $\rho\ge r$.  Using now the
fact that $K$ is non-decreasing on $(-\infty, 0)$ and that $0\ge rp+\a
r^2\ge \rho p$, we get that
\begin{equation}\label{eq:212}
\I_2\le 2\a.
\end{equation}

\paragraph{Estimate on $\I_3$.}
Using the matrix inequality \eqref{eq:203}, we have that for all $\xi,
\zeta\in \R$
\[A\xi^2\le B\zeta^2 +\frac {(\xi-\zeta)^2}\a.\]
Using also that $\sigma$ is bounded by $1$, we get
\[I_3\le \a +\frac 1\a\left(\sigma (rp+\a r^2)
-\sigma (\rho p)\right)^2\le \a + \frac 1 \a 
\left(\|\sigma'\|_{L^\infty(\rho p, rp+\a r^2)}((r-\rho )p +\a r^2)\right)^2.\]
Since $|\sigma'(a)|\le \frac 1 {a^2}$, we have
$\|\sigma'\|_{L^\infty(\rho p, rp+\a r^2)}\le \frac 1 {(r(p+\alpha
  r))^2}\le \frac 1 {(r\delta)^2}$. Hence we get
\begin{equation}\label{eq:213}
I_3\le \a+ \frac 1{ \a}\left(\frac {(r-\rho )p +\a r^2} 
{(r\delta)^2} \right)^2=\a +\frac 1{\delta^4 \a} \left(\frac {\a p^2}{r^2} + \a\right)^2
\le   \a +\frac {4\a L_0^4}{\delta^4} 
\end{equation}
where for the last inequality, we have used that $r\ge r_0\ge 1$ and
$|p|\le L_0$ with $L_0\ge 1$.

Combining \eqref{eq:210}, \eqref{eq:211}, \eqref{eq:212} and
\eqref{eq:213}, we finally get
\[\eta t\le   -\frac 12\a r+ 5\a L_0^2 +\frac {4\a L_0^4}{\delta^4}.\]
Taking the limit $\eta \to 0$ and using \eqref{eq:214}, we get (using
$L_0\ge 1$)
\[0\le   -\frac 12\a r+   5\a L_0^2 +\frac {4\a L_0^4}{\delta^4}\]
which is absurd for $\displaystyle r\ge r_0>10 L_0^2+\frac{8L_0^4}{\delta ^4}$.

\paragraph{Case 2: $r=r_0$ or $\rho =r_0$.}
Assume for instance that $r=r_0$ (the case $\rho=r_0$ being similar).
Using that $M_{\a,\eta}>0$ for $\a$ and $\eta$ small enough, we get
that
\[\frac {|r_0-\rho|^2}{2\a}+\frac \a 2 r_0^2\le U(t,r_0)-V(t,\rho)\le 
V(t,r_0)-V(t,\rho)\le L_0 |r_0-\rho|.\] 
This implies in particular that $|r_0-\rho|\le 2\a L_0$. Injecting
this in the previous inequality, we obtain that
\[\frac \a2 {r_0^2}\le 2 \a L_0^2\]
which is absurd for $r_0> 2L_0$. This ends the proof of the proposition.
\end{proof}

Before proving Liouville Theorem \ref{theo:liouville}, we first prove
Theorem~\ref{theo::pp1} that has been used in Subsection \ref{s4.2}.
\begin{proof}[Proof of Theorem~\ref{theo::pp1}]
For all $\nu\in\R$, we define 
\[w^\nu=\Phi^1-\Phi^2+\nu.\]
In view of \eqref{eq:prof}, we can choose $\nu \ge 0$ big enough so
that $w^\nu\ge 0$.  We then define
\[\nu^*=\inf\{\bar \nu\ge 0 :\;   w^{\nu}\ge 0 \; \mbox{in}\; 
[0,+\infty),\quad \mbox{for all}\quad \nu\ge \bar \nu\}.\]
We want to show that $\nu^*=0$. By contradiction, let us assume that
$\nu^*>0$. Using \eqref{eq:prof}, we then have
\[
\left\{\begin{array}{l}
w^{\nu^*}\ge 0\\
w^{\nu^*}(r)>0 \quad {\rm for}\; r \textrm{ large enough}\\
\displaystyle \inf_{r\in [0,+\infty)} w^{\nu^*}(r)=0.
\end{array}\right.
\]
From Propositions \ref{prop:comp-r-petit} and \ref{prop:comp-r-grand},
we deduce that we have
\[\displaystyle \inf_{r\in \left[r_0^-,r_0^+\right]} w^{\nu^*}(r) =
\displaystyle \inf_{r\in [0,+\infty)} w^{\nu^*}(r) = 0\]
with $0<r_0^-< r_0^+$. Using again the Strong Maximum Principle
(Theorem~\ref{theo:max-fort}), we deduce that $w^{\nu^*}\equiv 0$.
For $r=+\infty$, this implies that $\nu^*=0$. Contradition. Therefore
$\nu^*=0$ and $\Phi^1\ge \Phi^2$.  Exchanging $\Phi^1$ and $\Phi^2$,
we get the reverse inequality.  This shows that $\Phi^1=\Phi^2$ and
ends the proof.
\end{proof}

We now prove Theorem~\ref{theo:liouville}.
\begin{proof}[Proof of Theorem~\ref{theo:liouville}]
The proof proceeds in several steps. 

\paragraph{Step 0: regularity and condition at $r=0$.}
Because $U$ is globally Lipschitz continuous (in space and time), we
can apply Proposition~\ref{pro::ob30} to conclude that $U\in
C^{1,2}(\R\times [0,+\infty))$. By continuity in equation
  \eqref{eq:main} up to $r=0$, we deduce that $U$ satisfies
\[U_r(t,0)=-\frac12 \quad \mbox{for all}\quad t\in\R.\]
Finally, from Lemma \ref{lem:1}, we have
\[|U_{rr}(t,r)|\le C(1+r^2) \quad \mbox{for all}\quad (t,r)\in\R\times
    [0,+\infty).\]

\paragraph{Step 1: preliminaries for the sliding method.}
We apply the sliding method (see \cite{BN}).
For any $h\in\R$, we set
\[U^h (t,r) = U(t+h,r).\]
Since $U$ satisfies \eqref{eq:dist-finie}, one can choose $b\ge 0$
large enough so that $U^h+b \ge U$ on $\R \times [0,+\infty)$. We now
consider
\[b^* = \inf \{ b\in\R : U^h+b \ge U \}\]
and we set
\[V := U^h + b^* \ge U.\]
Notice that, using in particular Step 0, we can check that the
assumptions of Propositions~\ref{prop:comp-r-petit} and
\ref{prop:comp-r-grand} are fulfilled with $0< r_0^-< r_0^+< +\infty$
(decreasing $r_0^-$ and increasing $r_0^+$ if necessary).

We claim that this implies 
\begin{equation}\label{eq::ob16}
m:= \inf_{(t,r) \in \R\times [r_0^-,r_0^+]} (V-U)=0.
\end{equation}

Indeed, if $m>0$, applying Propositions \ref{prop:comp-r-petit} and
\ref{prop:comp-r-grand}, we deduce that
\[V-U \ge m >0 \quad \mbox{on}\quad \R\times [0,+\infty)\]
which contradicts the definition of $b^*$. Therefore (\ref{eq::ob16})
holds true.

\paragraph{Step 2: consequence.} We distinguish two cases. 

\paragraph{Case 1: the infimum in (\ref{eq::ob16}) is reached at $(t_0,r_0)$.}
We have 
\[\left\{\begin{array}{ll}
V\ge U &\quad \mbox{on}\quad \R\times [0,+\infty),\\
V=U  &\quad \mbox{at}\quad (t_0,r_0) \in  \R\times [r_0^-,r_0^+].
\end{array}\right.\]
Notice that $W=V-U$ satisfies 
\[W_t = A W_r + H(rV_r)W_{rr}\]
with $A$ and $H$ defined in (\ref{eq::ob20}) and (\ref{eq::ob21}).
Moreover $A$ and $H(rV_r)$ are continuous functions because $U,V\in
C^{2,1}(\R \times [0,+\infty))$.

From the strong maximum principle (Theorem \ref{theo:max-fort})
applied to $W$, we deduce that
\[V\equiv U\]
which gives for all $k \in \Z$
\[U(t,r) = U (t+h,r)+b^* = U(t+kh,x)+kb^*.\]
In view of \eqref{eq:dist-finie}, this implies that $b^* = - \lambda
h$, i.e.
\[U (t+h,r)=U(t,r) + \lambda h.\]

\paragraph{Case 2: the infimum in (\ref{eq::ob16}) is reached at infinity.}
We now assume that there exists sequences $(t_n)_n$ and $r_n
\in[r_0^-,r_0^+]$ such that $|t_n|\to +\infty$, $r_n\to r_\infty\in
   [r_0^-,r_0^+]$ and $(V-U)(t_n,r_n)\to m$.  We define the functions
\[U_n(t,r):=U(t+t_n,r)-\lambda t_n,\quad  V_n(t,r)=V(t+t_n,r)-\lambda t_n\]
which have the same Lipschitz constant (in space and time) as the one of $U$.
We can then apply Ascoli-Arzel\`a Theorem, to deduce that, up to a subsequence, we have
\[U_n\to U_\infty,\quad V_n \to V_\infty,\quad \mbox{and}\quad V_\infty(t,r)=U_\infty(t+h,r)+b^*\]
where $U_\infty$, $V_\infty$ are two globally Lipschitz solutions of \eqref{eq:main} on $\R \times (0,+\infty)$ satisfying again
\[\left\{\begin{array}{ll}
V_\infty\ge U_\infty &\quad \mbox{on}\quad \R\times [0,+\infty),\\
V_\infty=U_\infty  &\quad \mbox{at}\quad (0,r_\infty) \in  \R\times [r_0^-,r_0^+].
\end{array}\right.\]
We can then repeat Step 0 and then case 1 for $(U,V)$ replaced by
$(U_\infty,V_\infty)$ and get that $b^* = - \lambda h$, and then $V\ge
U$ means
\begin{equation}\label{eq::ob42}
U (t+h,r)\ge U(t,r) + \lambda h.
\end{equation}

\paragraph{Step 3: conclusion.}
Notice that (\ref{eq::ob42}) means that $t\mapsto U(t,r)-\lambda t$ is
both nondecreasing (using $h>0$) and nonincreasing (using $h<0$).
This implies that
\[U(t,r)-\lambda t = U(0,r).\]
From (\ref{eq::ob43}), we have in particular
\[U_r(0,r)\le 0\]
and by our assumptions $U(0,r)$ is globally Lipschitz in the variable
$r$.  Then Theorem \ref{th:steady} i) implies that there exists a
constant $a\in\R$ such that
\[U(0,r) = \Phi(r) + a.\]
This ends the proof of the theorem.
\end{proof}

\section{Long time convergence}\label{s7}

In order to prove Theorem \ref{thm:temps-long} we need the following
proposition, whose proof is postponed.
\begin{pro}[Gradient estimate from above]\label{pro:estimate-gradient}
Let $T>0$ and let $U$ be a solution of \eqref{eq:main}-\eqref{eq:ci}
in $(0,T)\times (0,+\infty)$, such that $U$ is globally Lipschitz
continuous with respect to time. Assume that there exists a constant
$C$ such that for all $(t,r)\in (0,T)\times (0,+\infty)$, 
\begin{equation}\label{eq::ob50bis}
|U(t,r)-\lambda t-\Phi(r)|\le C.
\end{equation}
If the initial datum $U_0$ satisfies
\begin{equation}\label{eq::ob53}
(U_0)_r\le \Phi_r\quad \mbox{in} \quad (0,+\infty)
\end{equation}
then we have
\[U_r\le \Phi_r \quad \mbox{in} \quad (0,T)\times (0,+\infty).\]
\end{pro}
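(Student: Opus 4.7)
The strategy is to compare $U_r$ and $\Phi_r$ via a parabolic maximum principle applied to an equation they both satisfy. The key observation is that after differentiating \eqref{eq:main} with respect to $r$, the constant $\lambda$ disappears (since $(\lambda t)_t = \lambda$ is independent of $r$), so $U_r$ and $\Phi_r$ solve the same second-order quasi-linear parabolic equation.

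More concretely, I would pass to the logarithmic coordinate $x = \ln r$, setting $u(t,x) = U(t,e^x)$ and $\varphi(x) = \Phi(e^x)$, so that $u$ solves \eqref{eq::mainx} and $\varphi$ solves the ODE $F(x,\varphi_x,\varphi_{xx}) = \lambda$ with $F$ given by \eqref{eq::mainx}. Differentiating \eqref{eq::mainx} in $x$, the function $w := u_x$ satisfies the quasi-linear parabolic equation \eqref{eq::e3} already derived in Section~\ref{s3}; since $\lambda$ does not appear there, $\psi := \varphi_x$ also solves \eqref{eq::e3} (with $\psi_t = 0$). The hypothesis \eqref{eq::ob53} reads $w(0,\cdot) \le \psi$ on $\R$, and the claim reduces to propagating this inequality on $(0,T) \times \R$.

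The natural next step is to write $\zeta := w - \psi$ and use a standard Taylor expansion of \eqref{eq::e3} to show that $\zeta$ solves a linear parabolic equation
\[
\zeta_t = a(t,x)\zeta_{xx} + b(t,x)\zeta_x + c(t,x)\zeta
\]
with leading coefficient $a > 0$ (essentially $e^{-2x}/(1+w^2)$), and with $a,b,c$ locally bounded thanks to the interior regularity provided by Proposition~\ref{pro::ob30} and Lemma~\ref{lem:1}. At the ``boundary'' $x = -\infty$ (i.e.\ $r = 0^+$), both $w = e^x U_r$ and $\psi = e^x \Phi_r$ vanish because $U_r(t,0) = \Phi_r(0) = -\tfrac12$, which furnishes a natural zero boundary value for $\zeta$. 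At $x = +\infty$, the bound \eqref{eq::ob50bis} together with interior Schauder estimates (Section~\ref{s2}) controls the growth of $\zeta$.

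Finally, to reach $\zeta \le 0$, I would apply a weighted maximum principle: add a penalty $-\eps \rho(x)$ with $\rho \to +\infty$ as $x \to \pm\infty$ chosen so that any positive maximum of $\zeta - \eps \rho$ must be attained in a compact region, where the linear equation and $\zeta(0,\cdot) \le 0$ rule it out; then let $\eps \to 0$. Translated back, this yields $U_r = e^{-x} u_x \le e^{-x} \varphi_x = \Phi_r$ on $(0,T)\times(0,+\infty)$. The main obstacle is executing this last step cleanly: the equation degenerates as $x \to +\infty$ (where $a \to 0$) and has unbounded zero-order coefficients as $x \to -\infty$, so the penalty $\rho$ must be tailored so that the term it produces in the linear equation has the correct sign to prevent an interior maximum while remaining $o(\zeta/\eps)$ at infinity. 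The assumption \eqref{eq::ob50bis} is essential here, as it provides the growth control on $\zeta$ needed at $+\infty$.
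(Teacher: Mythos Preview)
Your plan is a reasonable alternative but differs from the paper's and leaves the decisive step open.

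The paper does \emph{not} differentiate the equation. It proves the integrated inequality $u(t,x)-u(t,y)\le\phi(x)-\phi(y)$ for all $x>y$ by contradiction, doubling only the time variable and penalizing:
\[
M_{\eps,\alpha,\eta}=\sup_{x>y,\ t,s\in[0,T)}\Big\{u(t,x)-u(s,y)-\phi(x)+\phi(y)-\frac{|t-s|^2}{2\eps}-\frac{\alpha}{2}x^2-\frac{\eta}{T-t}\Big\}.
\]
Assumption~\eqref{eq::ob50bis} makes this sup finite and attained. If the maximizer has $t=0$ or $s=0$, the initial condition~\eqref{eq::ob53} together with the time-Lipschitz bound gives a contradiction for small $\eps$. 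Otherwise, the test functions touching $u$ from above at $(t,x)$ and from below at $(s,y)$ are $\phi+\tfrac{\alpha}{2}(\cdot)^2$ and $\phi$ respectively, so the two viscosity inequalities subtract to
\[
\frac{\eta}{T^2}\le F\big(x,\phi_x(x)+\alpha x,\phi_{xx}(x)+\alpha\big)-F\big(x,\phi_x(x),\phi_{xx}(x)\big),
\]
since $F(\cdot,\phi_x,\phi_{xx})\equiv\lambda$ cancels between the $x$- and $y$-points. The right side involves only derivatives of the \emph{known} profile $\phi$, and elementary estimates (using~\eqref{eq:estGradR} and the $\Phi_{rr}$ bound from Lemma~\ref{lem:1}) drive it to $0$ as $\alpha\to 0$, a contradiction.

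Your route instead linearizes~\eqref{eq::e3} to an equation for $\zeta=w-\psi$ whose coefficients involve $w_x=u_{xx}$ through the nonlinear term $\tfrac{2w w_x^2}{(1+w^2)^2}$; the only available bound (Lemma~\ref{lem:1}) gives $|u_{xx}|\lesssim e^{4x}$ as $x\to+\infty$, precisely where the diffusion $a\sim e^{-2x}/(1+w^2)$ degenerates. Building a barrier $\rho$ that dominates the possible growth $\zeta\sim e^x$ while still producing a favorable sign in this regime is the actual substance of the argument, and you have not supplied it. The paper's device sidesteps all of this: by comparing $u$ against $\phi$ itself (shifted by the penalty), no second derivative of $u$ ever enters, and only the time-Lipschitz hypothesis and~\eqref{eq::ob50bis} are used---matching the stated assumptions, which do not include spatial regularity of $U$ beyond what is needed to make sense of~\eqref{eq::ob53}.
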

\begin{proof}[Proof of Theorem  \ref{thm:temps-long}]
By Theorem \ref{th::ob22}, there exists a unique solution $U$ to
(\ref{eq:main}), (\ref{eq:ci}) which is globally Lipschitz continuous
(in space and time).  Notice that $\lambda t +\Phi(r)$ is a global
solution.  Therefore, using (\ref{eq::ob26}) and applying the
comparison principe (see \cite[Theorem 1.3]{spirale1}), we deduce
the following estimate for all times,
\begin{equation}\label{eq::ob65}
|U(t,r)-\lambda t-\Phi(r)|\le C
\end{equation}
Finally using (\ref{eq::ob27}) and applying Proposition \ref{pro:estimate-gradient},
we deduce that 
\begin{equation}\label{eq:002}
U_r\le \Phi_r\le \delta <0.
\end{equation}

Then for any sequence $t_n\to +\infty$, by Ascoli-Arzel\`a theorem, we
get the convergence (for a subsequence still denoted by $(t_n)_n$),
\[U(t+t_n,r)-U(t_n,0) \to U_\infty(t,r) \quad \mbox{locally uniformly
  on compact sets}\]
 where $U_\infty$ is still globally Lipschitz
continuous and still satisfies \eqref{eq::ob65} and \eqref{eq:002}.
Therefore the Liouville result (Theorem \ref{theo:liouville}) implies
that there exists a number $a\in \R$ such that
\[U_\infty(t,r)= \lambda t + \Phi(r) +a.\]
This ends the proof of the theorem.
\end{proof}
\begin{proof}[Proof of Proposition \ref{pro:estimate-gradient}]
We have to prove that for $r> \rho>0$
\[U(t,r)-U(t,\rho)\le \Phi(r)-\Phi(\rho).\]
Using log coordinates and setting $u(t,x)=U(t,e^x)$ and $\phi(x)=\Phi(e^x)$, 
this is equivalent to prove that for $x>y>-\infty$
\[u(t,x)-u(t,y)\le \phi(x)-\phi(y).\]
Recall that $u$ and $\lambda t +\phi(x)$ are both solutions of  the following equation
\[u_t = F(x,u_x,u_{xx})=e^{-x}\sqrt{1+u_x^2}+ e^{-2x}u_x 
+ e^{-2x}\frac{u_{xx}}{1+u_x^2}.\]
By contradiction, assume that
\[M=\sup_{x>y,\  t\in [0,T)}\left\{u(t,x)-u(t,y)- \phi(x)+\phi(y)\right\}>0.\]
For $\e,\a,\eta>0$, we consider the following approximate supremum, 
\begin{equation}\label{eq::ob51bis}
M_{\e,\a,\eta}=\sup_{x>y,\ t,s\in [0,T)}\left\{u(t,x)-u(s,y)-
  \phi(x)+\phi(y)-\frac {|t-s|^2}{2\e}-\frac \a2x^2-\frac
  \eta{T-t}\right\}.
\end{equation}
Remark that when the penalization parameters $\a$ and $\eta$ are small enough, 
we have 
\[M_{\e,\a,\eta}\ge M/2 >0.\]
From (\ref{eq::ob50bis}), we deduce that $u(t,x)-u(s,y)-
\phi(x)+\phi(y)$ is bounded by $2C+\lambda T$, and then the supremum in
(\ref{eq::ob51bis}) is reached at a point that we denote by
$(t,x,s,y)$ which satisfies
\[\frac {|t-s|^2}{2\e}+\frac \a2x^2+\frac \eta{T-t}\le 2C +\lambda T.\]
We deduce in particular that 
\begin{equation}\label{eq:220}
\lim_{\a \to 0} \a x =0.
\end{equation}

The proof is divided into two cases. 

\paragraph{Case 1: there exists  $\e_n\to 0$ such that $t=0$ or $s=0$.}
Assume for example that $t=0$ (if $s=0$, a similar reasoning provides
the same contradiction). Then we have
\[\begin{array}{ll}
\displaystyle \frac \eta T & <\displaystyle u(0,x)-u(s,y)-\phi(x)+\phi(y)-\frac {s^2}{2\e}\\
& \displaystyle \le u(0,y)-u(s,y) -\frac {s^2}{2\e}\le Ls-\frac {s^2}{2\e}\le \frac {\e L^2}2
\end{array}\]
where in the second line we have used (\ref{eq::ob53}) and then used
$L$, which denotes the Lipschitz constant in time of $U$. This is
absurd for $\e$ small enough.

\paragraph{Case 2: for all $\e$ small enough we have $t,s>0$.}
In that case, using that the function
\[(t',x')\mapsto u(t',x')-u(s,y)- \phi(x')+\phi(y)
-\frac {|t'-s|^2}{2\e}-\frac \a2(x')^2-\frac \eta{T-t'}\]
 reaches a maximum at $(t,x)$, we deduce that
\[\frac {t-s}\e+\frac \eta{(T-t)^2}\le F(x,\phi_x(x)+\a x, \phi_{xx}(x)+\a).\]
Similarly, we have that 
\[\frac {t-s} \e \ge  F(y,\phi_x(y), \phi_{xx}(y)).\]
Subtracting these two inequalities, we get
\begin{align*}
\frac \eta{T^2}\le& F(x,\phi_x(x)+\a x, \phi_{xx}(x)+\a)- F(y,\phi_x(y), \phi_{xx}(y))\\
\le &F(x,\phi_x(x)+\a x, \phi_{xx}(x)+\a)-F(x,\phi_x(x),
\phi_{xx}(x))\\
& +F(x,\phi_x(x), \phi_{xx}(x))- F(y,\phi_x(y), \phi_{xx}(y))\\
\le &F(x,\phi_x(x)+\a x, \phi_{xx}(x)+\a)-F(x,\phi_x(x), \phi_{xx}(x))+\lambda -\lambda
\end{align*}
which gives
\begin{equation}\label{eq::ob66}
\frac \eta{T^2}\le e^{-x}\a |x|+ e^{-2x}\a x +e^{-2x} \a +  I
\end{equation}
with
\[I:=e^{-2x}\phi_{xx}(x)\left(\frac 1 {1+(\phi_x(x)+\a x)^2}-\frac 1 {1+(\phi_x(x))^2}\right).\]
We write
\[I:=e^{-2x}\phi_{xx}(x)\ J,\quad J:= H(\phi_x(x)+\a x)-H(\phi_x(x)),\quad H(p):=\frac{1}{1+p^2}.\]

\paragraph{Estimate on $J$.}
We observe that the function $H$ is concave in
$\left[-\frac{1}{\sqrt{3}},\frac{1}{\sqrt{3}}\right]$ and convex
outside. Recalling (\ref{eq:estGradR}), we also see that
\begin{equation}\label{eq::ob65bis}
e^{-x}\phi_x(x) = \Phi_r(e^x) \in [-1, -\lambda].
\end{equation}
We now define some $b>0$ such that 
\[\left\{\begin{array}{ll}
-\frac{1}{2\sqrt{3}}\le \phi_x(x)\le 0 & \quad \mbox{for all}\quad x\le -b<0,\\
\phi_x(x)\le -\frac{2}{\sqrt{3}}& \quad \mbox{for all}\quad x\ge b>0,
\end{array}\right.\]
We call $L_1$ the Lipschitz constant of $H$. Using (\ref{eq:220}), 
we can assume $\alpha x$ small enough.  For instance, for
$|\alpha x|\le \frac{1}{2\sqrt{3}}$, we deduce from the
convexity/concavity property of $H$ that
\begin{equation}\label{eq::ob61}
\left\{\begin{array}{ll}
\displaystyle \alpha x \frac{C}{|\phi_x(x)|^3} \ge \alpha x
H'(\phi_x(x)+\alpha x) 
\ge J\ge \alpha x H'(\phi_x(x))\ge 0 
& \quad \mbox{for all}\quad x\ge b>0,\medskip\\
\alpha x L_1\le J\le \a x H'(\phi_x(x))\le 0 & \quad \mbox{for all}\quad x\le -b<0,
\end{array}\right.
\end{equation}
where now $C>0$ is generic constant that can change from line to line.

\paragraph{Estimate on $I$.}
Notice that $\lambda t + \Phi(r)$ is a globally Lipschitz  continuous solution of (\ref{eq:main}), 
and then Lemma \ref{lem:1} implies the bound (\ref{eq::ob31}), namely
\[|\Phi_{rr}(r)|\le C (1+r^2).\]
Because $\Phi_{rr}(e^x)=e^{-2x}\phi_{xx}-e^{-2x}\phi_x$, we deduce that
\[I\le e^{-2x}\phi_x J + C(1+e^{2x}) |J|.\]
We deduce, using (\ref{eq::ob65bis}) and (\ref{eq::ob61}), that
\[I\le \left\{\begin{array}{ll}
Ce^{-x} \alpha x & \quad \mbox{for all}\quad x\ge b>0,\\
Ce^{-x} \alpha |x| + C \alpha |x| & \quad \mbox{for all}\quad x\le -b<0,
\end{array}\right.\]
which can be rewritten as
\[I \le Ce^{-x} \alpha |x| \quad \mbox{for all}\quad |x|\ge b>0.\]
Using (\ref{eq::ob66}), this leads to
\begin{equation}\label{eq::ob64}
\frac \eta{T^2}\le Ce^{-x}\a |x|+ e^{-2x}\a x   \quad \mbox{for all}\quad |x|\ge b>0
\end{equation}
We now distinguish several cases.

Assume first that there exists $\a\to 0$ such that $x\le -b$.
Increasing $b>0$ if necessary, we can assume that $Ce^{-x}\a |x|+ e^{-2x}\a
x\le 0$ for all $x\le -b$, which gives a contradiction.

Second, assume that there exists $\a\to 0$ such that $x\ge b$.
For $x\ge b>0$, sending $\alpha\to 0$ in (\ref{eq::ob64}), we get a contradiction.

Finally, assume that for all $\a$ small enough, we have $-b \le x \le b$.
In that case, we have from (\ref{eq::ob66})
\[\frac \eta{T^2}\le e^{-x}\a |x|+ e^{-2x}\a x +e^{-2x} \a +e^{-2x}|\phi_{xx}(x)|L_1\a |x|.\]
Again, sending $\a\to 0$, we get a contradiction.  This ends the proof
of the proposition.
\end{proof}

\small

\appendix

\section{Appendix}\label{s8}

\subsection{Strong maximum principle and Hopf lemma}

In this subsection, we recall the classical strong maximum principle
and Hopf lemma.\\ For $-\infty\le t_1<t_2\le +\infty$ and $0<R\le
+\infty$, let us consider the following general linear parabolic
equation
\begin{equation}\label{quasi-lin}
w_t = a(t,r) w_{rr} + b(t,r) w_r + c(t,r) w \quad \mbox{on}\quad Q:=(t_1,t_2)\times (0,R)
\end{equation}
with the following assumptions on the coefficients
\begin{equation}\label{eq::ob15}
\left\{\begin{array}{l}
a,b,c\in C(\overline{Q}),\\
a\ge \delta >0 \quad \mbox{on}\quad \overline{Q}
\end{array}\right.
\end{equation}
For $A=Q$ or $\overline{Q}$, we recall that we say that a function
$w\in C^{2,1}(A)$ if and only if $w,w_r, w_{rr}, w_t\in C(A)$.  Then
we have the following classical result.

\begin{theo}[Strong maximum principle \cite{nirenberg53}]\label{theo:max-fort}
Consider a function $w \in C^{2,1}(Q)$ which is a supersolution of
\eqref{quasi-lin}.  If
\[\left\{\begin{array}{ll}
w\ge 0 & \quad \mbox{on}\quad Q,\\
w=0 & \quad \mbox{at}\quad (t_0,r_0)\in Q
\end{array}\right.\]
then $w\equiv 0$ on $Q\cap \left\{t\le t_0\right\}$.
\end{theo}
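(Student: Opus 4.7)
The plan is to prove this classical parabolic strong maximum principle via a barrier argument in the spirit of Nirenberg and Friedman, in three main steps.

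Step 1 is a reduction to the case $c\equiv 0$. On any compact sub-region of $\overline{Q}$, the coefficient $c$ is bounded below by some $-C_0$; setting $v = e^{C_0 t} w$, the supersolution inequality becomes $v_t \ge a v_{rr} + b v_r + (c+C_0) v \ge a v_{rr} + b v_r$, where the last step uses $w\ge 0$ to discard the non-negative zeroth-order term. Since $v=0 \iff w=0$, it suffices to treat the case $c\equiv 0$.

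Step 2 is the geometric set-up. Fix $(t_0,r_0)\in Q$ with $w(t_0,r_0)=0$, and suppose for contradiction that some point $(t_1,r_1)\in Q\cap\{t\le t_0\}$ has $w(t_1,r_1)>0$. By continuity and path-connectedness of $Q\cap\{t\le t_0\}$, one extracts an open ball $B=B((t^\ast,r^\ast),\rho)\subset\{w>0\}$ whose closure touches $\{w=0\}$ at a single point $(\tilde t,\tilde r)\in\partial B$; a standard sliding/shrinking manoeuvre (or a passage to an ellipsoid with the proper anisotropy in space and time) then arranges that the contact occurs on the lateral side of $B$, i.e.\ $\tilde t < t^\ast$ and $|\tilde r - r^\ast|$ is a definite fraction of $\rho$. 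Introduce the Nirenberg barrier
\[
\psi(t,r) = \exp\bigl(-\alpha[(r-r^\ast)^2 + \beta(t-t^\ast)^2]\bigr) - e^{-\alpha\rho^2},
\]
which vanishes on an ellipsoid containing $(\tilde t,\tilde r)$ and is positive strictly inside. A direct calculation, using $a\ge\delta$, the boundedness of $b$, and the fact that $|r-r^\ast|$ is bounded below on the region of interest, shows that $\psi_t - a\psi_{rr} - b\psi_r \le 0$ there for $\alpha$ large, the dominant term $-4\alpha^2 a(r-r^\ast)^2$ overpowering the rest.

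Step 3 is the comparison. On the inner boundary (a small ellipsoid concentric with the outer one), $w$ is bounded below by some $\mu>0$ by continuity on a compact subset of $\{w>0\}$, while $\psi$ is bounded above; on the outer boundary $\psi = 0 \le w$. Therefore, for $\varepsilon>0$ small, $w-\varepsilon\psi\ge 0$ on the parabolic boundary of the annular region intersected with $\{t\le\tilde t\}$, and the weak parabolic maximum principle (whose proof is elementary and does not invoke the strong form) propagates this to the interior, giving $w(\tilde t,\tilde r)\ge \varepsilon\psi(\tilde t,\tilde r)>0$, contradicting $w(\tilde t,\tilde r) = 0$. A standard connectedness argument then propagates the resulting local vanishing of $w$ from a parabolic neighbourhood of $(t_0,r_0)$ throughout $Q\cap\{t\le t_0\}$. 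The main obstacle is precisely the geometric set-up in Step 2: if $\tilde t$ were at or above $t^\ast$, the $\psi_t$ term would change sign and the subsolution inequality would fail, which is exactly what forces the conclusion to hold only in the backward time direction.
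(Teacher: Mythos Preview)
The paper does not supply its own proof of this theorem: it is stated in the appendix as a classical result and attributed to Nirenberg~\cite{nirenberg53}, so there is no ``paper's proof'' to compare against. Your outline is in the right spirit (reduction to $c\equiv 0$, tangent ball, Nirenberg barrier), but Step~3 as written does not close.

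The issue is the final contradiction. You place $(\tilde t,\tilde r)$ on the ellipsoid $\{\psi=0\}$ (the outer boundary of your annulus), yet conclude $w(\tilde t,\tilde r)\ge \varepsilon\psi(\tilde t,\tilde r)>0$. But $\psi(\tilde t,\tilde r)=0$ by construction, so this gives only $0\ge 0$ and no contradiction. The correct endgame is a derivative comparison: from $w-\varepsilon\psi\ge 0$ in the annular region with equality at the boundary point $(\tilde t,\tilde r)$, the outward normal derivative satisfies $\partial_\nu(w-\varepsilon\psi)(\tilde t,\tilde r)\le 0$, while a direct computation gives $\partial_\nu\psi(\tilde t,\tilde r)<0$, hence $\partial_\nu w(\tilde t,\tilde r)<0$. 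Since $(\tilde t,\tilde r)$ is an \emph{interior} minimum of $w$ in $Q$, one has $w_r(\tilde t,\tilde r)=w_t(\tilde t,\tilde r)=0$, forcing $\partial_\nu w(\tilde t,\tilde r)=0$; this is the contradiction. Note also that your Step~2 sliding manoeuvre must genuinely produce a contact with $\tilde r\neq r^\ast$ (so that the normal has a nonzero spatial component and the dominant barrier term $-4\alpha^2 a(r-r^\ast)^2$ is active near the contact); the purely temporal contact $\tilde r=r^\ast$ requires a separate lemma in Nirenberg's scheme, and your sketch elides this case distinction.
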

We also have (see \cite[Lemma~2.8]{lieberman96}).
\begin{lem}[Hopf lemma]\label{lem:hopf}
Consider a function $w\in C^{2,1}(\overline{Q})$ which is a
supersolution of \eqref{quasi-lin}.  If
\[\left\{\begin{array}{ll}
w\ge 0 & \quad \mbox{on}\quad \overline{Q},\\
w=0 & \quad \mbox{at}\quad (t_0,0)\in \partial Q \quad \mbox{with}\quad t_0\in (t_1,t_2)
\end{array}\right.\]
then either $w \equiv 0$ on $\overline{Q}\cap \left\{t\le t_0\right\}$
or $w_r(t_0,0)>0$.
\end{lem}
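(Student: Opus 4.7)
The approach is the classical Hopf boundary-point argument, adapted to the parabolic operator $Lu := u_t - a u_{rr} - b u_r - c u$ at the flat spatial boundary face $\{r = 0\}$. I assume the first alternative of the dichotomy fails, i.e. $w \not\equiv 0$ on $\overline Q \cap \{t \le t_0\}$, and deduce $w_r(t_0, 0) > 0$ by comparing $w$ with an exponential barrier on a small region touching $(t_0, 0)$.

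The first step is a reduction to strict positivity of $w$ close to $(t_0, 0)$ but away from the face $\{r = 0\}$. By continuity there is a point $(t^*, r^*)$ with $t^* < t_0$ and $r^* \in (0, R)$ at which $w(t^*, r^*) > 0$; were $w$ to vanish at any interior point $(t', r') \in Q$ with $t^* < t' \le t_0$, the strong maximum principle (Theorem~\ref{theo:max-fort}) would give $w \equiv 0$ on $Q \cap \{t \le t'\}$, contradicting $w(t^*, r^*) > 0$. Hence $w > 0$ throughout $Q \cap (t^*, t_0]$; by continuity and compactness $w \ge \mu > 0$ on the segment $\{(t, \rho) : t \in [t_0 - \tau, t_0]\}$ for any fixed $\rho \in (0, R)$ and small $\tau \in (0, t_0 - t^*)$.

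Next, I would choose a small parabolic tangent region $D$ whose closure touches the face $\{r = 0\}$ only at $(t_0, 0)$ (for instance, the interior of a parabolic half-ball centered off the axis), and a smooth defining function $\phi$ with $\phi(t_0, 0) = 0 < \phi_{\max} := \sup_{\overline D} \phi$, with $\phi_r(t_0, 0) < 0$, and with $|\phi_r| \ge c_0 > 0$ on $\overline D$. Then the barrier
\[
v(t, r) := \varepsilon\bigl(e^{-\alpha \phi(t, r)} - e^{-\alpha \phi_{\max}}\bigr)
\]
vanishes on the curved part of $\partial D$ and at $(t_0, 0)$, and is positive inside $D$. A direct calculation gives
\[
Lv = \alpha \varepsilon e^{-\alpha \phi}\bigl[-\phi_t + a\phi_{rr} + b\phi_r - a\alpha \phi_r^2\bigr] - c\varepsilon\bigl(e^{-\alpha \phi} - e^{-\alpha \phi_{\max}}\bigr);
\]
the term $-a\alpha^2 \phi_r^2 e^{-\alpha \phi}$ dominates by the uniform ellipticity $a \ge \delta > 0$ and the lower bound $|\phi_r| \ge c_0 > 0$, so that for $\alpha$ sufficiently large (depending on $\delta$, $c_0$ and $\|a,b,c\|_{L^\infty(\overline D)}$) one has $Lv \le 0$ on $\overline D$, i.e.\ $v$ is a classical subsolution. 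Choosing $\varepsilon$ small enough ensures $v \le w$ on the curved part of $\partial D$, where $w \ge \mu$ by Step~1; at the single touching point $(t_0, 0)$ one has $v = 0 = w$.

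The classical weak comparison principle for the linear operator $L$ with continuous coefficients and $a \ge \delta$ then yields $v \le w$ throughout $\overline D$. Since $v(t_0, 0) = 0 = w(t_0, 0)$ and $v \le w$ near $(t_0, 0)$, the right radial derivative satisfies
\[
w_r(t_0, 0) \ge v_r(t_0, 0) = -\alpha \varepsilon\, \phi_r(t_0, 0)\, e^{-\alpha \phi(t_0,0)} > 0,
\]
which is the second alternative of the dichotomy. The principal technical difficulty is the simultaneous calibration of the region $D$ and the defining function $\phi$ so that $\phi_r$ is strictly signed throughout $\overline D$ (needed for $v$ to be a strict subsolution everywhere in $\overline D$, not merely near the touching point) while $\overline D$ meets the boundary face $\{r = 0\}$ only at $(t_0, 0)$; this is where the specific parabolic geometry of the boundary point enters, and distinguishes the argument from the purely elliptic Hopf lemma.
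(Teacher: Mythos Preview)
The paper does not actually prove this lemma; it is simply quoted from \cite[Lemma~2.8]{lieberman96}. Your exponential-barrier approach is precisely the classical argument found there (and in Protter--Weinberger), so in spirit you are doing what the cited reference does.

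There is, however, a convention mix-up in your barrier that breaks the argument as written. With $\phi(t_0,0)=0<\phi_{\max}$ and $v=\varepsilon\bigl(e^{-\alpha\phi}-e^{-\alpha\phi_{\max}}\bigr)$ one gets $v(t_0,0)=\varepsilon\bigl(1-e^{-\alpha\phi_{\max}}\bigr)>0$, not $0$; hence the touching condition $v(t_0,0)=w(t_0,0)=0$ fails and the final inequality $w_r(t_0,0)\ge v_r(t_0,0)$ is unjustified. The standard setup is the reverse: take $\phi$ \emph{maximal} at the boundary point, for instance $\phi(t,r)=(r-\rho)^2+K(t_0-t)$ on $\overline D\subset\{t\le t_0\}$, so that $\phi(t_0,0)=\rho^2=\phi_{\max}$. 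Then $v(t_0,0)=0$, $v$ vanishes on the outer curved piece $\{\phi=\rho^2\}$ of $\partial D$, and the smallness of $\varepsilon$ is needed on the \emph{inner} piece of the parabolic boundary (where $w\ge\mu$ by your Step~1), not on the curved part as you wrote. With this correction --- and restricting $D$ further to, say, $\{r<\rho/2\}$ so that $|\phi_r|=2(\rho-r)\ge\rho$ throughout, which is exactly how one resolves the difficulty you correctly flag at the end --- the outline goes through.
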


\subsection{Interior Schauder estimate}

The following result can be found in Krylov \cite{K} (see also \cite{LSU,lieberman96}).
\begin{pro}[Interior Schauder estimates]\label{pro::ob36}
Let $T>0$, $\delta>0$, $R>0$ and $\alpha\in (0,1)$ and $N\ge 1$. 
Assume that $w$ solves (in the sense of distributions)
\[w_t = a \Delta w + b \quad \mbox{on}\quad (T-\delta,T+\delta)\times B_R\] 
with $B_R$ the ball of radius $R$ in $\R^N$. Assume that $a,b
\in C_{t,x}^{\frac{\alpha}{2},\alpha}((T-\delta,T+\delta)\times B_R)$
with for some $\eta>0$:
\[0< \eta \le a \le 1/\eta \quad \mbox{on}\quad (T-\delta,T+\delta)\times B_R\]
and
\[\|a\|_{C_{t,x}^{\frac{\alpha}{2},\alpha}((T-\delta,T+\delta)\times B_R)}\le C_0.\]
Then there exists a constant $C=(\delta,R,\alpha,N,\eta,C_0)>0$ such that 
\[\|w\|_{C_{t,x}^{\frac{\alpha}{2},\alpha}([T,T+\delta)\times
    B_{R/2})} \le
  C\left\{\|b\|_{C_{t,x}^{\frac{\alpha}{2},\alpha}((T-\delta,T+\delta)\times
    B_R)} + |w|_{L^\infty((T-\delta,T+\delta)\times B_R)}\right\}.\]
\end{pro}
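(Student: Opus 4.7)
The plan is to deduce the claim from the full interior parabolic Schauder estimate, of which the stated bound is a strictly weaker consequence.

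First I would invoke the full classical interior Schauder estimate for the linear parabolic equation $w_t = a\Delta w + b$ (see Krylov \cite{K}, or LSU, or Lieberman): under exactly the hypotheses on $a$ and $b$ assumed in the proposition (namely, $a \in C^{\alpha/2,\alpha}$ uniformly elliptic with $0<\eta\le a\le 1/\eta$, and $b \in C^{\alpha/2,\alpha}$), one has
\[
\|w\|_{C^{1+\alpha/2,\,2+\alpha}_{t,x}([T,T+\delta)\times B_{R/2})} \le C_1 \bigl( \|b\|_{C^{\alpha/2,\alpha}_{t,x}((T-\delta,T+\delta)\times B_R)} + \|w\|_{L^\infty((T-\delta,T+\delta)\times B_R)} \bigr),
\]
with $C_1 = C_1(\delta,R,\alpha,N,\eta,C_0)$. (The textbook form of this estimate is usually stated on a cylinder strictly smaller in both time and space; if needed I would apply it first on, say, $[T-\delta/4,\,T+\delta)\times B_{3R/4}$ and then absorb the slightly larger time window into the constant, which is harmless.)

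Second, I would observe the trivial continuous embedding $C^{1+\alpha/2,\,2+\alpha}_{t,x} \hookrightarrow C^{\alpha/2,\,\alpha}_{t,x}$ on the bounded parabolic cylinder $[T,T+\delta)\times B_{R/2}$: any function whose time derivative and spatial gradient are bounded is in particular Lipschitz in $t$ and in $x$, hence H\"older of every exponent in $(0,1]$, with an embedding constant that depends only on $\delta$, $R$, and $\alpha$. Chaining this embedding with the Schauder estimate of the first step produces the stated bound on $\|w\|_{C^{\alpha/2,\alpha}}$.

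There is essentially no substantive obstacle: the only point that must be verified carefully is that the hypotheses of the proposition match exactly those under which the classical parabolic Schauder estimate is stated in the cited references (they do). If one instead wanted a self-contained argument avoiding the full Schauder machinery, the weaker stated conclusion can also be derived directly in two standard steps: (i) apply the Krylov--Safonov H\"older estimate for non-divergence form parabolic equations with merely bounded measurable coefficients to obtain some intrinsic H\"older exponent $\alpha_0>0$ depending only on $\eta$ and $N$; and (ii) run a Campanato-type bootstrap using the $C^{\alpha/2,\alpha}$ regularity of $a$ and $b$ to upgrade $\alpha_0$ to the prescribed $\alpha$. This alternative is considerably longer, and the direct embedding route above is the one I would present.
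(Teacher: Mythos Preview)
The paper does not prove this proposition: it simply records it as a standard result and cites Krylov \cite{K} (and also \cite{LSU,lieberman96}). Your proposal is therefore correct and in fact supplies more detail than the paper does; the reduction you describe (invoke the full $C^{1+\alpha/2,\,2+\alpha}$ interior Schauder estimate from those references, then embed into $C^{\alpha/2,\alpha}$) is exactly the right way to read the citation.

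One remark worth making: the paper's actual \emph{use} of this proposition (in Step~3 of the proof of Proposition~\ref{pro::ob30}) invokes a bound on $\|V-V(T,0)\|_{C^{1+\frac16,\,2+\frac13}_{t,x}}$, i.e.\ the full higher-order Schauder norm, not merely the $C^{\alpha/2,\alpha}$ norm written in the conclusion here. So the stated conclusion is almost certainly a typo for the stronger $C^{1+\alpha/2,\,2+\alpha}$ estimate, which is what the cited references actually give and what your first step already provides. Your embedding step is thus unnecessary for what the paper needs, though it is of course correct.
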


\subsection{A technical lemma}

\begin{lem}[A H\"{o}lder estimate]\label{lem::ob35}
Let $\alpha\in (0,1)$ and $N\ge 1$.  For $X\in\R^N$, let us define the function
\[\zeta({X}):=\left\{\begin{array}{ll}
\displaystyle |X|^\alpha \frac{X}{|X|} & \quad \mbox{if}\quad X\not=0,\\
0  & \quad \mbox{if}\quad X=0
\end{array}\right.\]
Then there exists a constant $C=C(\alpha)>0$ such that for all $X',X\in \R^N$, we have
\[|\zeta({X}') - \zeta({X})|\le C |X'-X|^\alpha.\]
\end{lem}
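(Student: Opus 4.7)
My plan is to prove the estimate by a standard dichotomy between the ``far'' regime (where $X$ and $X'$ are far apart compared to their distance to the origin) and the ``near'' regime (where both points sit safely away from the origin relative to their separation). The singularity is only at the origin, and away from it $\zeta$ is smooth, so the only subtle point is what happens when one or both of $X, X'$ is close to $0$.

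Without loss of generality assume $|X|\ge |X'|$, and set $h:=X'-X$. The first case is $|h|\ge |X|/2$. Then $|X|\le 2|h|$ and $|X'|\le |X|+|h|\le 3|h|$, so the triangle inequality and $|\zeta(Y)|=|Y|^\alpha$ immediately give
\[
|\zeta(X')-\zeta(X)|\le |X'|^\alpha+|X|^\alpha\le (2^\alpha+3^\alpha)|h|^\alpha,
\]
which is the desired bound. The second case is $|h|<|X|/2$, and here in particular $X\ne 0$ and the whole segment $X_s:=X+sh$, $s\in[0,1]$, stays in the region $|X_s|\ge |X|/2>0$. On this region $\zeta$ is $C^1$ and a direct differentiation of $\zeta(Y)=|Y|^{\alpha-1}Y$ yields
\[
D\zeta(Y)=|Y|^{\alpha-1}\,I+(\alpha-1)\,|Y|^{\alpha-3}\,Y\otimes Y,\qquad |D\zeta(Y)|\le C(\alpha,N)\,|Y|^{\alpha-1}.
\]

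In the near case I then write $\zeta(X')-\zeta(X)=\int_0^1 D\zeta(X_s)\,h\,ds$. Since $\alpha-1<0$, the bound $|X_s|\ge |X|/2$ gives $|X_s|^{\alpha-1}\le (|X|/2)^{\alpha-1}$ for every $s\in[0,1]$, hence
\[
|\zeta(X')-\zeta(X)|\le C\,|X|^{\alpha-1}\,|h|.
\]
Finally I use once more that $|X|>2|h|$, which combined with $\alpha-1<0$ implies $|X|^{\alpha-1}<(2|h|)^{\alpha-1}$, so $|X|^{\alpha-1}|h|\le 2^{\alpha-1}|h|^\alpha$, giving the desired Hölder bound in the near case with a constant depending only on $\alpha$ (and on $N$, but $N=3$ in the use made in Section~\ref{s2}). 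Combining the two cases yields the claim.

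I do not expect any serious obstacle: the only point requiring care is to make sure the gradient blow-up $|Y|^{\alpha-1}$ is reabsorbed correctly by the constraint $|h|<|X|/2$ in the near case, which is exactly where the sign $\alpha-1<0$ is used twice (once to control $|X_s|^{\alpha-1}$ along the segment, and once to trade $|X|^{\alpha-1}|h|$ for $|h|^\alpha$). The far case is essentially just the triangle inequality applied to $|\zeta(Y)|=|Y|^\alpha$.
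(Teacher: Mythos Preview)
Your proof is correct. The near/far dichotomy and the mean-value argument in the near case are completely sound; in particular the two uses of $\alpha-1<0$ (to push $|X_s|^{\alpha-1}$ up to $(|X|/2)^{\alpha-1}$, and then to trade $|X|^{\alpha-1}|h|$ for $|h|^{\alpha}$) are exactly the right moves. One very minor remark: with the operator norm one has $\|D\zeta(Y)\|\le (2-\alpha)\,|Y|^{\alpha-1}$, so the constant is actually independent of $N$, in line with the statement $C=C(\alpha)$.

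Your route is genuinely different from the paper's. The paper does not use the gradient of $\zeta$ at all: it splits $\zeta(X')-\zeta(X)$ into a radial part $T_1=(|X'|^\alpha-|X|^\alpha)\,X'/|X'|$ and an angular part $T_2=|X|^\alpha\,(X'/|X'|-X/|X|)$, then estimates each by elementary one-variable inequalities (e.g.\ $|r^\alpha-1|\le C|r-1|^\alpha$ for $r\ge 1$) and the $1$-Lipschitz property of $Y\mapsto Y/|Y|$ on $\{|Y|\ge 1\}$, with case splits on the ratio $|X'|/|X|$. Your approach is shorter and more conceptual: it is the standard ``smooth away from the singularity with controlled gradient blow-up'' argument, and it would apply verbatim to any $\zeta$ with $|D\zeta(Y)|\lesssim |Y|^{\alpha-1}$. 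The paper's approach is more hands-on and gives slightly more explicit constants, but at the cost of more case analysis.
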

\begin{proof}[Proof of Lemma \ref{lem::ob35}]
Let us assume that $|X'|\ge |X|>0$. We write
\[\zeta(X')-\zeta(X) = T_1 + T_2\]
with
\[T_1=\left(|X'|^\alpha-|X|^\alpha\right)\frac{X'}{|X'|}\quad
\mbox{and}\quad
T_2=|X|^\alpha\left(\frac{X'}{|X'|}-\frac{X}{|X|}\right).\]

\paragraph{Step 1: estimate on $T_1$.}
We have
\begin{equation}\label{eq::ob45}
||X'|^\alpha-|X|^\alpha| = |X|^\alpha\left|r^\alpha -1\right| \quad
\mbox{with}\quad r = \frac{|X'|}{|X|}\ge 1.
\end{equation}

\paragraph{Case A: $1\le r\le 2$.}
We write $r=1+\delta$ with $0\le \delta\le 1$. Then we have
\[\begin{array}{ll}
\left|r^\alpha -1\right| & = \alpha \delta + O(\delta^2)\\
& \le C \delta^\alpha = C |r-1|^\alpha.
\end{array}\]
\paragraph{Case B: $r\ge 2$.}
Then we have
\begin{equation}\label{eq::ob44}
\left|r^\alpha -1\right| \le C |r-1|^\alpha.
\end{equation}
Putting together cases A and B, we see that (\ref{eq::ob44}) holds
true for any $r\ge 1$.  Using (\ref{eq::ob45}), we get for some $C\ge
1$:
\begin{equation}\label{eq::ob46}
|T_1|=||X'|^\alpha-|X|^\alpha| \le C ||X'|-|X||^\alpha \le C |X'-X|^\alpha.
\end{equation}

\paragraph{Step 2: estimate on $T_2$.}
Writing $\displaystyle e=\frac{X}{|X|}$, $\displaystyle
Y=\frac{X'}{|X|}$ with $|Y|\ge 1$, and using the fact that the map
$\displaystyle Z\mapsto \frac{Z}{|Z|}$ is $1$-Lipschitz (for the
euclidean norm) on $\R^N\backslash B(0,1)$, we get that
\begin{equation}\label{eq::ob50}
\left|\frac{X'}{|X'|}-\frac{X}{|X|}\right|=\left|\frac{Y}{|Y|}-\frac{e}{|e|}\right|\le
|Y-e| = \frac{\left|X'-X\right|}{|X|}.
\end{equation}

\paragraph{Case A: $|X|\le |X'|\le 2|X|$.}
Using (\ref{eq::ob50}), we deduce that
\[|T_2|\le |X|^\alpha \frac{|X'-X|}{|X|} = 
\frac{|X'-X|^{1-\alpha}}{|X|^{1-\alpha}}|X'-X|^\alpha \le
2^{1-\alpha}\left(\frac{|X'-X|}{|X'|}\right)^{1-\alpha}|X'-X|^\alpha\]
which implies
\begin{equation}\label{eq::ob51}
|T_2|\le  4^{1-\alpha}|X'-X|^\alpha.
\end{equation}

\paragraph{Case B: $|X'|\ge 2|X|$.}
We have
\[|T_2|\le |X|^\alpha \le ||X'|-|X||^\alpha \le |X'-X|^\alpha\]
Putting together cases A and B, we see that (\ref{eq::ob51}) holds
true for any $|X'|\ge |X|>0$.

\paragraph{Step 3: conclusion.}
From Steps 1 and 2, we deduce that there exists a constant $C>0$ such
that
\[|\zeta(X')-\zeta(X)|\le C |X'-X|^\alpha\]
This last estimate is also true if $X=0$. By symmetry between $X'$ and
$X$, we see that it is finally true for any $X,X'\in\R^N$.  This ends
the proof of the lemma.
\end{proof}

\subsection{Equation satisfied by the curvature}

The following result is not used in the rest of the paper.
We give it as an interesting result of independent interest.
\begin{lem}[Equation satisfied by the curvature]\label{lem::*16}
Let $\Phi$ be the profile given by Theorem~\ref{th:steady}.
The curvature $\kappa(x)=\kappa_\Phi(e^x)$ solves the following equation
\begin{equation}\label{eq::*17}
\kappa_t=\frac{e^{-2x} \kappa_{xx}}{1+u_x^2} + \kappa^2(1+\kappa) 
+ e^{-2x} \kappa_x\left\{-1 + \frac{2u_x^2}{1+u_x^2} +\frac{e^x u_x}{\sqrt{1+u_x^2}} \right\}.
\end{equation}
\end{lem}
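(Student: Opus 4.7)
The computation is direct and starts from the two identities already established for any smooth solution $u(t,x)=U(t,e^x)$ of \eqref{eq::mainx}: the explicit formula for the curvature
$$\kappa = e^{-x}\frac{u_x}{\sqrt{1+u_x^2}} + e^{-x}\frac{u_{xx}}{(1+u_x^2)^{3/2}}$$
(see \eqref{eq::1000}) and the equivalent form of the PDE
$$u_t = e^{-x}(1+\kappa)\sqrt{1+u_x^2}$$
(see \eqref{eq::1001}). The plan is to compute $\kappa_t$ by differentiating the curvature formula in time, substitute $u_{xt}$ and $u_{xxt}$ via one and two $x$-derivatives of the PDE, and then match the result against the target right-hand side by expressing everything in terms of $\kappa$, $\kappa_x$, $\kappa_{xx}$, and the lower-order quantities $u_x, u_{xx}$.

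More concretely, setting $w=u_x$, I would first write
$$\kappa_t = e^{-x}\left[\frac{w_t}{(1+w^2)^{3/2}} + \frac{w_{xt}}{(1+w^2)^{3/2}} - \frac{3 w\, w_x\, w_t}{(1+w^2)^{5/2}}\right],$$
and then substitute $w_t = \partial_x u_t$ and $w_{xt} = \partial_{xx} u_t$ using \eqref{eq::mainx} (the evolution equation for $w$ was essentially computed in \eqref{eq::e3}). This produces an expression polynomial in $w$, $w_x$, $w_{xx}$, $w_{xxx}$, with coefficients depending on $e^{-x}$ and $(1+w^2)^{-k/2}$.

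In parallel, I would compute $\kappa_x$ and $\kappa_{xx}$ by direct differentiation of the formula for $\kappa$: the $w_{xxx}$-dependence of $\kappa_t$ comes solely from the diffusive term $e^{-2x} w_{xx}/(1+w^2)$ of the PDE and matches exactly the $w_{xxx}$-contribution of $(e^{-2x}/(1+w^2))\kappa_{xx}$, fixing the diffusion coefficient. Next, the $w_{xx}$-terms that remain after subtracting the diffusive contribution should combine into a multiple of $\kappa_x$, yielding the stated drift coefficient
$$e^{-2x}\left\{-1 + \frac{2u_x^2}{1+u_x^2} + \frac{e^x u_x}{\sqrt{1+u_x^2}}\right\}.$$
Finally, the terms free of $w_{xx}$ and higher derivatives must, using the defining relation $\kappa\sqrt{1+w^2} = e^{-x}w + e^{-x}w_x/(1+w^2)$, collapse to $\kappa^2(1+\kappa)$.

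The main obstacle is purely algebraic bookkeeping: two successive $x$-differentiations of a quotient involving $(1+w^2)^{-3/2}$ generate many terms, and isolating the $\kappa^2(1+\kappa)$ reaction term from the zeroth-order leftovers is the delicate step. The cleanest way to proceed, and a useful sanity check, is to rely on the classical geometric identity for the evolution of curvature of a planar curve moving with normal velocity $V$, namely $\partial_t^\perp \kappa = V_{ss} + \kappa^2 V$ in the arc-length parameter $s$, where $\partial_t^\perp$ is the normal time derivative. With $V = 1+\kappa$ this directly gives the reaction term $\kappa^2(1+\kappa)$ and the leading diffusion $\kappa_{ss}$; converting $\partial_{ss}$ and $\partial_t^\perp$ back to $\partial_{xx}$ and $\partial_t$ at fixed $x$ introduces the tangential-drift correction $V_T\,\partial_s$ and the Jacobian factor relating $\partial_s$ to $e^{-x}\partial_x/\sqrt{1+u_x^2}$, which account for the remaining drift coefficient in \eqref{eq::*17}. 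Either route (brute-force calculation or the geometric derivation followed by change of variables) completes the proof.
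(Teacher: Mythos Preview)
Your plan is correct and in spirit matches the paper: both are direct computations starting from $u_t=e^{-x}(1+\kappa)\sqrt{1+u_x^2}$ and the curvature formula. The paper, however, organizes the bookkeeping more efficiently by setting $M(a)=a/\sqrt{1+a^2}$, so that $\kappa=e^{-2x}\bigl(e^xM(u_x)\bigr)_x$ and hence $\kappa_t=e^{-2x}\bigl(e^xM'(u_x)u_{xt}\bigr)_x$; this means one only needs to compute $u_{xt}$ explicitly (which the paper writes in the compact form $(1+\kappa)\{(1+u_x^2)\kappa u_x-e^{-x}(1+u_x^2)^{3/2}\}+e^{-x}\sqrt{1+u_x^2}\,\kappa_x$) and then take a \emph{single} $x$-derivative, rather than separately computing $u_{xxt}$ and tracking three orders of derivatives as you propose. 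This cuts the algebra roughly in half and makes the appearance of $\kappa^2(1+\kappa)$ almost immediate. Your alternative route via the geometric identity $\partial_t^\perp\kappa=V_{ss}+\kappa^2 V$ with $V=1+\kappa$ is not used in the paper and is a genuinely different, more conceptual derivation; it gives the reaction and diffusion terms for free, at the cost of having to carefully convert the normal time derivative and arclength derivatives back to the fixed-$x$ parametrization, which is where the drift term arises.
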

\begin{proof}[Proof of Lemma \ref{lem::*16}]
We start from
\[u_t= e^{-x}\sqrt{1+u_x^2}(1+\kappa)\]
with
\[\kappa = e^{-x}\left(\frac{u_x}{\sqrt{1+u_x^2}}+\frac{u_{xx}}{(1+u_x^2)^{\frac32}}\right).\]
Let us define
\[M(a)=\frac{a}{\sqrt{1+a^2}}\quad \mbox{with}\quad M'(a)=\frac{1}{(1+a^2)^{\frac32}}.\]
Then we can write
\[\kappa = e^{-2x}(e^x M(u_x))_x\]
and
\[\kappa_t=e^{-2x}(e^x M'(u_x)u_{xt})_x.\]
We now compute
\[\begin{array}{ll}
u_{xt} & \displaystyle =e^{-x}\sqrt{1+u_x^2}(\kappa_x -(1+\kappa)) +
\frac{e^{-x}}{\sqrt{1+u_x^2}}(1+\kappa)u_xu_{xx}\medskip \\
&\displaystyle =e^{-x}\sqrt{1+u_x^2}(\kappa_x -(1+\kappa)) + e^{-x}\sqrt{1+u_x^2}(1+\kappa)u_x \left(\frac{u_{xx}}{1+u_x^2}+u_x-u_x\right)\medskip\\
&\displaystyle =e^{-x}\sqrt{1+u_x^2}\kappa_x - e^{-x}\sqrt{1+u_x^2}(1+\kappa)(1+u_x^2) +
(1+u_x^2)(1+\kappa)\kappa u_x  \medskip \\
&\displaystyle =(1+\kappa) \left\{(1+u_x^2)\kappa u_x   - e^{-x}(1+u_x^2)^{\frac32} 
\right\}   + e^{-x}\sqrt{1+u_x^2}\kappa_x.
\end{array}\]
This gives
\[\begin{array}{ll}
e^{2x}\kappa_t &\displaystyle =\partial_x\left( (1+\kappa) \left\{\kappa \frac{e^x u_x}{\sqrt{1+u_x^2}}   - 1 \right\}   
+ \frac{\kappa_x}{1+u_x^2} \right)\medskip \\
&\displaystyle =\partial_x\left( (1+\kappa) \left\{\kappa e^x M(u_x)
- 1 \right\} + \frac{\kappa_x}{1+u_x^2} \right)\medskip \\
&\displaystyle  =\kappa_x\left\{\kappa e^x M(u_x)   
- 1 +(1+\kappa)e^x M(u_x) -\frac{2u_xu_{xx}}{(1+u_x^2)^2}\right\} 
+ \frac{\kappa_{xx}}{1+u_x^2} +(1+\kappa)\kappa e^{2x}\kappa.
\end{array}\]
Therefore
\[\begin{array}{ll}
\kappa_t & \displaystyle =\frac{e^{-2x}\kappa_{xx}}{1+u_x^2} +(1+\kappa)\kappa^2 \\
 & + e^{-2x}\kappa_x \left\{-1+(1+2\kappa)e^x M(u_x) -2e^x M(u_x)(\kappa
-e^{-x}M(u_x))\right\}\medskip \\
&\displaystyle =\frac{e^{-2x}\kappa_{xx}}{1+u_x^2} +(1+\kappa)\kappa^2 
+ e^{-2x}\kappa_x \left\{-1+e^x M(u_x) +2 (M(u_x))^2\right\}\\
\end{array}\]
which shows the result. This ends the proof of the lemma.
\end{proof}

\paragraph{Acknowledgements.} The authors would like to thank the
referee for the careful reading of the paper, his/her corrections of
some mistakes and for some indications on the literature.  This work
is partially supported by the ANR projects HJnet ANR-12-BS01-0008-01,
AMAM ANR 10-JCJC 0106 and IDEE ANR-2010-0112-01.  R.M. thanks
R.V. Kohn for indications on the literature.  N.F. and R.M. thank the
conference center of Oberwolfach for providing them excellent research
conditions during the preparation of this work.  They also thank the
organizers of the meeting ``Interfaces and Free Boundaries: Analysis,
Control and Simulation'' in 2013, for the invitation to participate
and present their works.  R.M. also thank Y. Giga for an invitation to
a meeting in Sapporo in 2010 and stimulating and enlighting
discussions about the problem studied in this paper.


\end{document}